\newtheorem{theorem}{Theorem}
\newtheorem{proposition}[theorem]{Proposition}
\newtheorem{observation}[theorem]{Observation}
\newtheorem{corollary}[theorem]{Corollary}
\newtheorem{problem}{Problem}
\newtheorem{them}{Theorem}
\newtheorem{lema}[them]{Lemma}
\newtheorem{observ}[them]{Observation}
\newtheorem{prop}[them]{Proposition}
\newtheorem{example}[theorem]{Example}
\newtheorem{remark}[theorem]{Remark}
\begin{document}

\title{Roman domination: changing, unchanging, $\gamma_R$-graphs }

\author[]{Vladimir Samodivkin}
\address{Department of Mathematics, UACEG, Sofia, Bulgaria}
\email{vl.samodivkin@gmail.com}
\today
\keywords{Changing and unchanging Roman domination number, $\gamma_R$-graph}

\begin{abstract}
A Roman dominating function (RD-function) on a graph $G = (V(G), E(G))$
 is a labeling $f : V(G) \rightarrow \{0, 1, 2\}$ such
that every vertex with label $0$ has a neighbor with label $2$. 
The weight $f(V(G))$ of a RD-function $f$ on $G$ is the value $\Sigma_{v\in V(G)} f (v)$. 
The {\em Roman domination number} $\gamma_{R}(G)$ of $G$ 
 is the minimum weight of a RD-function on $G$. 
 The six classes of graphs resulting from the changing or unchanging 
of the Roman domination number of a graph when a vertex is deleted,
 or an edge is deleted or added are considered. We consider relationships among the
classes, which are illustrated in a Venn diagram.
A graph $G$ is Roman domination $k$-critical if the removal of any
set of $k$ vertices decreases the Roman domination number.
Some initial properties of these graphs are studied. 
 The $\gamma_R$-graph of a graph $G$,  is any graph which  vertex set
is the collection $\mathscr{D}_R(G)$ of all minimum weight RD-functions on $G$. 
We define adjacency between any two  elements of $\mathscr{D}_R(G)$  
in several ways,  and  initiate the study  of the obtained  $\gamma_R$-graphs.
\end{abstract}

\maketitle


\section{Introduction and preliminaries } 
By a graph, we mean a finite, undirected graph with neither loops nor multiple edges.
For basic notation and graph theory terminology not explicitly defined here, we
in general follow Haynes et al. \cite{hhs1}.
We denote the vertex set and the edge set of a graph $G$ by $V(G)$ and $ E(G),$  respectively. 
A {\em spanning subgraph} for $G$ is a subgraph of $G$ which contains every vertex of $G$. 
In a graph $G$, for a subset $S \subseteq V (G)$ the {\em subgraph induced} by $S$ is the graph
$\left\langle S \right\rangle$ with vertex set $S$ and edge set $\{xy \in E(G) \mid x, y \in S\}$.
The complement $\overline{G}$ of $G$ is the graph whose
vertex set is $V (G)$ and whose edges are the pairs of nonadjacent vertices of $G$. 
The notation $G \simeq H$ will be used to denote that G and H are isomorphic. 
We write (a) $K_n$ for the {\em complete graph} of order $n$, 
(b) $K_{m,n}$ for the {\em complete bipartite graph} with partite sets of order $m$ and $n$, and 
(c) $P_n$ for the  {\em path} on $n$ vertrices. 
Let $C_m$ denote the {\em cycle} of length $m$. 
 For vertices $x$ and $y$ in a connected graph $G$, 
the {\em distance} $dist(x, y)$ is the length of a shortest $x-y$ path in $G$.
	For any vertex $x$ of a graph $G$,  $N_G(x)$ denotes the set of all  neighbors of $x$ in $G$,  
	$N_G[x] = N_G(x) \cup \{x\}$ and the degree of $x$ is $deg(x,G) = |N_G(x)|$. 
The {\em minimum} and {\em maximum} degrees
 of a graph $G$ are denoted by $\delta(G)$ and $\Delta(G)$, respectively.
   For a graph $G$, let $x \in X \subseteq V(G)$. 
	  A vertex $y \in V(G)$ is an $X$-{\em private neighbor}  of $x$ if $N_G[y] \cap X = \{x\}$. 
		The set of all $X$-private neighbors of $x$ is denoted by $pn_G[x,X]$.
		 A {\em leaf} of a graph is a vertex of degree $1$, while a {\em support vertex} is a vertex adjacent to a leaf.
 A {\em vertex cover} of a graph is a set of vertices such that 
each edge of the graph is incident to at least one vertex of the set.

   The study of domination and related subset problems is one of the fastest growing areas in graph theory.
For a comprehensive introduction to the theory of domination in graphs we refer the reader
 to Haynes et al. \cite{hhs1}.
A {\em dominating set} for a graph $G$ is a subset $D\subseteq V(G)$ of 
vertices such that every vertex not in $D$ is adjacent to
at least one vertex in $D$. The minimum cardinality of a dominating 
set is called the {\em domination number} of $G$ and is denoted by $\gamma (G)$.

A variation of domination called Roman domination was introduced by ReVelle \cite{re1,re2}.
 Also see ReVelle and Rosing \cite{rer} for an integer programming
formulation of the problem. 
The concept of Roman domination can be formulated in terms of graphs  (\cite{cdhh}).
A {\em Roman dominating function} ({RD-{\em function}) on a
graph $G$ is a vertex labeling $f : V(G) \rightarrow \{0, 1, 2\}$
 such that every vertex with label $0$ has a neighbor with label $2$. 
 For a RD-function $f$, let $ V_i^f = \{v \in V (G) : f(v) = i\}$ for i = 0, 1, 2. 
Since these $3$ sets determine $f$, we can equivalently write  $f=(V_0^f; V_1^f; V_2^f)$.
 The {\em weight} $f(V(G))$ of a RD-function $f$ on $G$ is the value $\Sigma_{v\in V(G)} f (v)$, 
  which equals $|V_1^f| + 2|V_2^f|$. 
The {\em Roman domination number} $\gamma_{R}(G)$ of $G$ 
 is the minimum weight of a RD-function on $G$. 
A  RD-function  with minimum weight in a graph $G$
 will be referred to as a $\gamma_{R}$-{\em function} on $G$. 
Denote by $\mathscr{D}_R(G)$ the set of all $\gamma_R$-functions on $G$ 
and $\# \gamma_R (G) = |\mathscr{D}_R(G)|$. 
If $H$ is a subgraph of $G$ and $f$ a $\gamma_R$-function on $G$, 
then we denote the  restriction of $f$ on $H$ by $f|H$.

It is often of interest to known how the value of a graph parameter $\mu$ is affected 
when a change is made in a graph. 
The addition of a set of edges,  or the removal of a set of  vertices/edges may increase or decrease $\mu$, or leave $\mu$ unchanged.  
Thus, it is naturally to consider the following classes of graphs. 
We use acronyms to denote these classes ($V$ represents vertex; $E$: edge; $R$: removal; $A$: addition).  
Let $k$ be a positive integer.

\begin{itemize}
\item[(i)] $(k$-$VR^-_{\mu})$   \hspace{.5cm} $\mu(G-S) < \mu (G)$  for any set  $S \subseteq  V(G)$ with $|S|=k$,
\item[(ii)] $(k$-$VR^+_{\mu})$  \hspace{.5cm} $\mu(G-S) > \mu (G)$  for any set  $S \subseteq  V(G)$ with $|S|=k$,			
\item[(iii)] $(k$-$VR^=_{\mu})$  \hspace{.5cm} $\mu(G-S) = \mu (G)$ for any set  $S \subseteq  V(G)$ with $|S|=k$,		
\item[(iv)] $(k$-$VR^{\not=}_{\mu})$  \hspace{.5cm} $\mu(G-S) \not= \mu (G)$ for any set  $S \subseteq  V(G)$ with $|S|=k$															
\item[(v)] $(k$-$ER^-_{\mu})$   \hspace{.5cm} $\mu(G-R) < \mu (G)$  for any set  $R \subseteq  E(G)$ with $|R|=k$,								
\item[(vi)] $(k$-$ER^+_{\mu})$   \hspace{.5cm} $\mu(G-R) > \mu (G)$ for any set  $R \subseteq  E(G)$ with $|R|=k$,
\item[(vii)] $(k$-$ER^=_{\mu})$   \hspace{.5cm} $\mu(G-R) = \mu (G)$  for any set  $R \subseteq  E(G)$ with $|R|=k$,	
\item[(viii)] $(k$-$ER^{\not=}_{\mu})$   \hspace{.5cm} $\mu(G-R) \not= \mu (G)$  for any set  $R \subseteq  E(G)$ with $|R|=k$,	
\item[(ix)] $(k$-$EA^-_{\mu})$  \hspace{.5cm}  $\mu(G+U) < \mu (G)$  for any set  $U \subseteq  E(\overline{G})$ with $|U|=k$,	
\item[(x)] $(k$-$EA^+_{\mu})$  \hspace{.5cm}  $\mu(G+U) > \mu (G)$  for any set  $U \subseteq  E(\overline{G})$ with $|U|=k$,	
\item[(xi)] $(k$-$EA^=_{\mu})$  \hspace{.5cm} $\mu(G+U) = \mu (G)$ for any set  $U \subseteq  E(\overline{G})$ with $|U|=k$,			
\item[(xii)] 	$(k$-$EA^{\not=}_{\mu})$  \hspace{.5cm} $\mu(G+U) \not= \mu (G)$ 
for any set  $U \subseteq  E(\overline{G})$ with $|U|=k$.
\end{itemize}

Two mathematical problems arise immediately: 
1) to find a nontrivial characterization of  every of the above classes, and 
	2) to establish relationships among these twelve classes. 
In Section \ref{six}, we concentrate on the second problem 
in the case when $\mu \equiv \gamma_R$ and $k=1$. 
In Section \ref{cvrk}  we present some initial 
 results on the class $k$-$VR^-_{\gamma_R}$.  
This class was introduced by Jafari Rad in \cite{r}.
The $\gamma_R$-graph of a graph $G$  is the graph which  vertex set is $\mathscr{D}_R(G)$.
In Section \ref{gammaR} we define adjacency between any two 
 $\gamma_R$-functions  of a graph in several ways,  
and  initiate the study  of the obtained  $\gamma_R$-graphs.

We end this section with some known results which will be useful in proving our main results.

\begin{observ}[\cite{cdhh}]  \label{o2}   
Let $f = (V_0^f; V_1^f; V_2^f)$ be any $\gamma_{R}$-function on a graph $G$. 
Then $\Delta (\left\langle V_1^f \right\rangle) \leq 1$ and 
no edge of $G$ joins $V_1^f$ and $V_2^f$.  If  $|V_1^f|$ is a minimum then $V_1^f$ 
is independent and if in addition $G$ is isolate-free then $V_0^f \cup V_2^f$ is a vertex cover. 
\end{observ}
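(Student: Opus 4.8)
The plan is to verify each of the four assertions in turn, all following from local exchange arguments that modify a $\gamma_R$-function $f$ while preserving the Roman domination property and not increasing the weight. First I would prove $\Delta(\langle V_1^f\rangle)\le 1$: suppose some vertex $u\in V_1^f$ has two neighbors $v,w\in V_1^f$. Then I would construct a new labeling $f'$ by setting $f'(u)=2$ and $f'(v)=f'(w)=0$, leaving all other labels unchanged. This is still an RD-function (the new $0$-vertices $v,w$ are adjacent to $u$ with label $2$, and no previously-dominated $0$-vertex loses its label-$2$ neighbor since we only raised $u$), and its weight is $f(V(G))+2-1-1=f(V(G))$, actually decreasing if $u$ had even more neighbors in $V_1^f$. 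Since $f$ is optimal no decrease is possible, but more to the point the construction shows that if $\deg(u,\langle V_1^f\rangle)\ge 2$ we can strictly reduce $|V_1^f|$ without increasing the weight, which will be the key leverage for the minimality claims; in any case $\langle V_1^f\rangle$ has maximum degree at most $1$.

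Next, for the claim that no edge joins $V_1^f$ and $V_2^f$: if $uv\in E(G)$ with $u\in V_1^f$ and $v\in V_2^f$, then $u$ is already dominated by the label-$2$ vertex $v$, so relabeling $f'(u)=0$ yields an RD-function of strictly smaller weight, contradicting optimality of $f$. This is immediate. For the third assertion, I would assume $f$ is chosen among all $\gamma_R$-functions so that $|V_1^f|$ is minimum, and argue $V_1^f$ is independent: if $uv\in E(G)$ with $u,v\in V_1^f$, I would set $f'(u)=2$, $f'(v)=0$, and all other labels unchanged. Then $v$ is dominated by $u$, the weight is unchanged ($+1-1=0$), so $f'$ is again a $\gamma_R$-function, but now $|V_1^{f'}|=|V_1^f|-2<|V_1^f|$, contradicting the minimality of $|V_1^f|$. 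Hence $V_1^f$ is independent.

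Finally, for the vertex cover claim, assume additionally that $G$ is isolate-free and $|V_1^f|$ is minimum, so $V_1^f$ is independent by the previous step. I would show $V_0^f\cup V_2^f$ meets every edge. An edge fails to be covered only if both endpoints lie in $V_1^f$, but $V_1^f$ is independent, so no such edge exists; hence every edge has an endpoint in $V(G)\setminus V_1^f=V_0^f\cup V_2^f$, which is exactly the vertex cover condition. I expect the only delicate point to be bookkeeping in the exchange arguments — confirming that raising a vertex's label to $2$ never destroys the Roman condition at any other $0$-vertex (it cannot, since we only remove $0$-labels or add label-$2$ vertices) and that the minimality hypothesis on $|V_1^f|$ is genuinely what rules out the adjacency in $\langle V_1^f\rangle$. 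The isolate-free hypothesis is needed only to ensure $V_0^f\cup V_2^f$ is well-behaved as a cover; without it an isolated vertex could be forced into $V_1^f$ and leave no edge to cover, so the statement is vacuously consistent there as well.
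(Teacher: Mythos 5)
The paper states this observation with a citation to Cockayne et al.\ and gives no proof of its own, so there is nothing to compare against except the standard exchange arguments, which is exactly what you use; your parts (2), (3) and (4) are correct as written. There is, however, one bookkeeping error in part (1) that actually matters for the logic. When you relabel $f'(u)=2$ and $f'(v)=f'(w)=0$, the vertex $u$ already carried weight $1$, so the change in weight is $+1-1-1=-1$, not $+2-1-1=0$: the new function has weight $\gamma_R(G)-1$, which immediately contradicts the minimality of $f$ and proves $\Delta(\langle V_1^f\rangle)\le 1$ for an \emph{arbitrary} $\gamma_R$-function, as the statement requires. With your (incorrect) accounting the weight would be unchanged, and then the construction would only show that \emph{some other} $\gamma_R$-function has small degree inside its $V_1$; it would say nothing about the given $f$, so your closing clause ``in any case $\langle V_1^f\rangle$ has maximum degree at most $1$'' would not follow. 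The fix is the one-character correction above. Two smaller remarks: the same off-by-one does not occur in your part (3), where the $+1-1=0$ computation is right because only one vertex is demoted; and your observation about the isolate-free hypothesis is essentially correct --- the vertex-cover conclusion follows from the independence of $V_1^f$ alone, since isolated vertices are incident to no edges, so that hypothesis is not actually load-bearing for the statement as phrased here.
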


In most cases,  Observation \ref{o2}   will be used in the sequel without specific reference.

\begin{them}[\cite{rv1}]\label{minus1}
Let $v$ be a vertex of a graph $G$. Then $\gamma_R(G-v) < \gamma_R(G)$ 
 if and only if there is a $\gamma_R$-function $f$ on $G$ such that $v \in V^f_1$.  
If $\gamma_R(G-v) < \gamma_R(G)$  then $\gamma_R(G-v) = \gamma_R(G) - 1$. 
If $\gamma_R(G-v) > \gamma_R(G)$  then  for every  $\gamma_{R}$-function  $f$  on $G$, $f(v) = 2$.
\end{them}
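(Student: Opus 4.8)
The plan is to exploit two elementary moves between $G$ and $G-v$: \emph{restriction} of a $\gamma_R$-function $f$ on $G$ to $V(G)\setminus\{v\}$, and \emph{extension} of a $\gamma_R$-function on $G-v$ to $G$ by assigning a label to $v$. The governing principle is that a vertex carrying label $0$ or $1$ never serves as the label-$2$ neighbor on which an adjacent $0$-vertex depends; only a label-$2$ vertex can play that protective role. First I would record the lower bound $\gamma_R(G-v)\ge \gamma_R(G)-1$: given any $\gamma_R$-function $g$ on $G-v$, define $h$ on $G$ by $h(v)=1$ and $h=g$ elsewhere. Since every $0$-vertex of $G-v$ is protected by some $2$-vertex different from $v$, and $v$ itself now carries label $1$, the function $h$ is an RD-function on $G$ whose weight is one more than that of $g$, whence $\gamma_R(G)\le \gamma_R(G-v)+1$.

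With this bound in hand, parts (1) and (2) follow quickly. For the ``if'' direction of (1), suppose $f$ is a $\gamma_R$-function on $G$ with $f(v)=1$; restricting $f$ to $G-v$ leaves every $0$-vertex protected, since its $2$-neighbor cannot be $v$, and so $f|_{G-v}$ is an RD-function of weight $\gamma_R(G)-1$, forcing $\gamma_R(G-v)<\gamma_R(G)$. Conversely, if $\gamma_R(G-v)<\gamma_R(G)$, the lower bound above forces $\gamma_R(G-v)=\gamma_R(G)-1$, which is exactly statement (2); extending an optimal $g$ on $G-v$ by setting $h(v)=1$ then produces a $\gamma_R$-function on $G$ with $v\in V_1^h$, giving the ``only if'' direction.

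For (3) I would argue by contradiction, supposing some $\gamma_R$-function $f$ on $G$ satisfies $f(v)\ne 2$ and deriving $\gamma_R(G-v)\le\gamma_R(G)$, contrary to hypothesis. If $f(v)=1$, the ``if'' direction just established already yields $\gamma_R(G-v)<\gamma_R(G)$. If $f(v)=0$, then passing to $G-v$ merely deletes a $0$-vertex, which protects no one, so $f|_{G-v}$ remains an RD-function, now of weight $\gamma_R(G)$. In either case $\gamma_R(G-v)\le\gamma_R(G)$, the desired contradiction.

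The single point demanding care — and the source of the asymmetry between the sharp ``decrease by exactly one'' and the unbounded possible increase — is checking that a restricted function remains a valid RD-function. This can fail only when $v$ is the \emph{unique} $2$-labelled neighbor of some $0$-vertex, that is, precisely the case $f(v)=2$ singled out in parts (1) and (3); repairing such a deletion may require re-covering many orphaned $0$-vertices, which is why removing a forced-$2$ vertex can raise $\gamma_R$ without bound, as the star $K_{1,n}$ shows.
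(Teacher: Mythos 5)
Your proof is correct. The paper itself gives no proof of this statement---it is quoted as Theorem~\ref{minus1} from \cite{rv1}---and your argument (the lower bound $\gamma_R(G-v)\ge\gamma_R(G)-1$ via the label-$1$ extension, plus restriction of a $\gamma_R$-function with $f(v)\ne 2$) is the standard and complete way to establish all three assertions.
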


 According to the effects of vertex removal on the Roman domination number of a graph $G$, let \cite{rhv}

$\bullet$\ $V_{R}^+(G) = \{v \in V(G) \mid \gamma_{R} (G-v) > \gamma_{R} (G)\}$, 

$\bullet$\ $V_{R}^-(G) = \{v \in V(G) \mid \gamma_{R} (G-v) < \gamma_{R} (G)\}$,

$\bullet$\ $V_{R}^=(G) = \{v \in V(G) \mid \gamma_{R} (G-v) = \gamma_{R} (G)\}$.

Clearly $V_{R}^-(G), V_{R}^=(G)$ and $V_{R}^+(G)$ are paired disjoint,  and their union is  $V(G)$.

\begin{them}\label{addedge1} {\rm(\cite{rhv})} 
Let $x$ and $y$ be  non-adjacent vertices  of a graph $G$.
Then $\gamma_R(G) \geq \gamma_R(G+xy) \geq \gamma_R(G)-1$. 
Moreover, $\gamma_R(G+xy) = \gamma_R(G)-1$ if and only if  there is a $\gamma_R$-function 
$f$ on $G$  such that $\{f(x), f(y)\} = \{1, 2\}$.
\end{them}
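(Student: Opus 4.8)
The plan is to establish the two inequalities first and then re-use the constructions behind them to settle the characterization, since a single label adjustment drives all three parts. For the upper bound $\gamma_R(G+xy) \leq \gamma_R(G)$, I would observe that adding an edge only enlarges neighborhoods: if $f$ is any RD-function on $G$ and $v$ is a vertex with $f(v)=0$, then the label-$2$ neighbor of $v$ guaranteed by $f$ in $G$ is still a neighbor of $v$ in $G+xy$. Hence every RD-function on $G$ remains an RD-function on $G+xy$ of the same weight, and in particular a $\gamma_R$-function on $G$ witnesses $\gamma_R(G+xy) \leq \gamma_R(G)$.

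For the lower bound $\gamma_R(G+xy) \geq \gamma_R(G)-1$, equivalently $\gamma_R(G) \leq \gamma_R(G+xy)+1$, I would start from a $\gamma_R$-function $g$ on $G+xy$ and ask when $g$, read on $G$, fails to be an RD-function. Deleting the edge $xy$ can affect the domination condition only at $x$ or at $y$, and only through the loss of a label-$2$ neighbor reached across that edge. A short case analysis on the pair $(g(x),g(y))$ shows the condition can be violated only when one value is $0$ and the other is $2$: if $g(x)=g(y)=0$, neither vertex ever served as the other's label-$2$ neighbor, so the edge $xy$ was irrelevant to domination; and if the vertex potentially needing help already carries a label in $\{1,2\}$, it is automatically satisfied. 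In the sole remaining configuration, say $g(x)=0$ and $g(y)=2$ with $y$ the only label-$2$ neighbor of $x$ in $G+xy$, I would repair $g$ by raising the label of $x$ from $0$ to $1$. This yields an RD-function on $G$ of weight $g(V)+1$ (and if no repair was needed, $g$ itself already works), so $\gamma_R(G) \leq \gamma_R(G+xy)+1$ in every case.

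For the characterization, the forward direction reuses exactly this repair. If $\gamma_R(G+xy) = \gamma_R(G)-1$ and $g$ is a $\gamma_R$-function on $G+xy$, then $g$ cannot already be an RD-function on $G$, for otherwise $\gamma_R(G) \leq g(V) = \gamma_R(G)-1$. Thus deletion of $xy$ must violate the condition, which by the case analysis forces the $\{0,2\}$ pattern on $\{x,y\}$; raising the label-$0$ vertex to $1$ produces an RD-function on $G$ of weight $g(V)+1 = \gamma_R(G)$, that is, a $\gamma_R$-function $f$ with $\{f(x),f(y)\} = \{1,2\}$. Conversely, given a $\gamma_R$-function $f$ on $G$ with $\{f(x),f(y)\} = \{1,2\}$, say $f(x)=1$ and $f(y)=2$ by symmetry, I would lower the label of $x$ from $1$ to $0$. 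In $G+xy$ the new edge supplies $x$ with the required label-$2$ neighbor $y$, and no other vertex is disturbed, since $x$ previously carried label $1$ and so was never anyone's label-$2$ neighbor. This gives an RD-function on $G+xy$ of weight $\gamma_R(G)-1$, which with the lower bound yields $\gamma_R(G+xy) = \gamma_R(G)-1$.

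The main obstacle is making the case analysis in the lower bound airtight: verifying that the $\{0,2\}$ pattern is genuinely the only way edge deletion can break the RD-condition, and that in that pattern exactly one vertex (the label-$0$ one) becomes exposed while its partner retains label $2$. Once this is pinned down, both the numerical bound and the structural equivalence follow from the same single increment of one label by $1$.
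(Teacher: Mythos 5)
Your argument is correct and complete: the monotonicity observation gives the upper bound, the single-vertex repair (raising the exposed label-$0$ endpoint to $1$) gives the lower bound, and the same repair together with its inverse (lowering the label-$1$ endpoint to $0$ once the new edge supplies a label-$2$ neighbor) yields both directions of the characterization. Note that the paper itself states this result without proof, citing Hansberg, Jafari Rad and Volkmann \cite{rhv}; your write-up is the standard argument for it, and the case analysis you flag as the main obstacle is indeed airtight, since failure at $x$ requires $g(x)=0$ and $g(y)=2$ while failure at $y$ requires the reverse, so at most one endpoint ever needs repair.
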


\newpage

	\section{Six classes} \label{six}

We will write $\mathcal{R}_{CVR}$,  $\mathcal{R}_{UVR}$,   $\mathcal{R}_{CER}$, $\mathcal{R}_{UER}$, $\mathcal{R}_{CEA}$,
    and  $\mathcal{R}_{UEA}$ 
instead of 
$1$-$VR^-_{\gamma_R}$,  $1$-$VR^=_{\gamma_R}$, $1$-$ER^+_{\gamma_R}$, 
 $1$-$ER^=_{\gamma_R}$,  $1$-$EA^-_{\gamma_R}$, and $1$-$EA^=_{\gamma_R}$, respectively.
 The first four classes of graphs  were introduced in \cite{rv1}    by Jafari Rad and Volkmann. 
On the other hand, the graphs in $\mathcal{R}_{CEA}$ and  $\mathcal{R}_{UEA}$  
were investigated by Hansberg et al. \cite{rhv}, and Chellali and Jafari Rad \cite{r}, respectively. 
Let us note that Theorems \ref{minus1}  and  \ref{addedge1}  imply that 
(a) the class $1$-$VR^+_{\gamma_R}$ is empty, 
 (b) the class $1$-$EA^+_{\gamma_R}$ consists of all complete graphs,
(c)  the class  $1$-$ER^-_{\gamma_R}$ consists of all edgeless graphs, 
(d) $1$-$VR^{\not=}_{\gamma_R} \equiv  \mathcal{R}_{CVR}$, 
(e)  $1$-$ER^{\not=}_{\gamma_R} \equiv  \mathcal{R}_{CER}$, and 
(f) $1$-$EA^{\not=}_{\gamma_R} \equiv  \mathcal{R}_{CEA}$.
That is why we concentrate,  in what follows,  
on the establishing relationships among the following six classes:
$\mathcal{R}_{CVR}$,  $\mathcal{R}_{UVR}$,   $\mathcal{R}_{CER}$, $\mathcal{R}_{UER}$, $\mathcal{R}_{CEA}$,
    and  $\mathcal{R}_{UEA}$. 
For further results on these classes see  \cite{crv}, \cite{hr}, \cite{rhv0} and    \cite{samdmaa}.  
Our main goal in this section is to show that these six classes
are related as in the Venn diagram of Fig. \ref{fig:regions0}.

	\begin{figure}[htbp]
	\centering
		\includegraphics{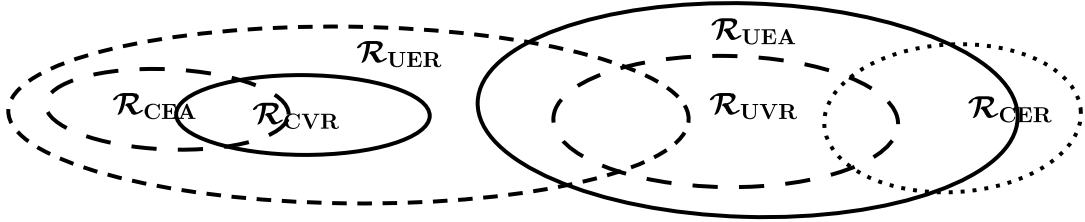}
	\caption{Classes of changing and unchanging graphs.}
	\label{fig:regions0}
\end{figure}

\begin{theorem}\label{ceainuer}
Let a graph $G$ be in $\mathcal{R}_{CEA}$. Then all the following hold. 
\begin{itemize}
\item[(i)] {\rm(\cite{crv})} $V (G) = V^-(G)\cup V^=(G)$ and either $V^=(G)$  
                        is empty or $\left\langle V^=(G) \right\rangle$ is a complete graph.
\item[(ii)] 	A vertex $x \in V^=(G)$ if and omly if there are $\gamma_R$-functions 
										$f_x$ and $g_x$ on $G$  with $\{f_x(x), g_x(x)\} = \{0,2\}$. 
\item[(iii)] If $V^=(G)$ is not empty and $\left\langle V^=(G)\right\rangle$  is not a connected component 
                     of $G$, then each vertex in $V^=(G)$  has a neighbor in $V^-(G)$. 
\item[(iv)]  $G$ is in $\mathcal{R}_{UER}$. 
\end{itemize}
\end{theorem}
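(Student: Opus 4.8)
The plan is to treat part~(i) as given (it is quoted from \cite{crv}) and to derive parts~(ii)--(iv) from two reformulations. First, Theorem~\ref{addedge1} lets me restate the hypothesis as: $G\in\mathcal{R}_{CEA}$ exactly when, for every pair of non-adjacent vertices $x,y$, some $\gamma_R$-function labels $\{x,y\}$ by $\{1,2\}$. Second, Theorem~\ref{minus1} together with $V^+(G)=\emptyset$ (part~(i)) gives the working equivalence I will use throughout: $x\in V^-(G)$ iff some $\gamma_R$-function assigns $x$ the value $1$, so that $x\in V^=(G)$ iff \emph{no} $\gamma_R$-function labels $x$ by $1$, i.e. every $\gamma_R$-function labels $x$ by $0$ or $2$.

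For part~(ii), I would treat the forward implication by constructing the two functions separately. A $g_x$ with $g_x(x)=2$ is obtained by adding an edge from $x$ to a non-neighbour and invoking Theorem~\ref{addedge1}: the resulting $\{1,2\}$-labelling must put the $2$ on $x$, since a $V^=$-vertex is never labelled $1$; the case of a universal $x$ is handled directly, as then $G$ is forced to be complete. An $f_x$ with $f_x(x)=0$ is obtained from a minimum RD-function of $G-x$ (which has weight $\gamma_R(G)$ because $x\in V^=(G)$) that carries a $2$ on $N_G(x)$, extended by $x\mapsto0$. For the reverse implication, $f_x(x)=0$ makes $f_x|(G-x)$ an RD-function, so $\gamma_R(G-x)\le\gamma_R(G)$ and $x\notin V^+(G)$; it then remains to show $x\notin V^-(G)$, that is, that $x$ is never labelled $1$. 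I expect this last point to be the subtle part of~(ii), because it cannot follow from the mere availability of the values $0$ and $2$ at $x$ and must genuinely exploit the add-edge strength of $\mathcal{R}_{CEA}$.

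Part~(iii) I would prove by contraposition. Suppose some $x\in V^=(G)$ has no neighbour in $V^-(G)$. Since $V(G)=V^-(G)\cup V^=(G)$ and $\langle V^=(G)\rangle$ is complete by part~(i), this forces $N_G[x]=V^=(G)$. If $V^-(G)=\emptyset$ then $G$ is complete and $\langle V^=(G)\rangle$ is already a component; otherwise I pick $p\in V^-(G)$, note $x\not\sim p$, feed the non-adjacent pair $\{x,p\}$ into the $\mathcal{R}_{CEA}$ characterisation, and use Observation~\ref{o2} to analyse the resulting labelling near $x$, the aim being to conclude that no vertex of $V^=(G)$ has a neighbour outside $V^=(G)$, i.e. that $\langle V^=(G)\rangle$ is a connected component.

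Part~(iv) is the main assertion. Since restoring a deleted edge cannot raise the value, Theorem~\ref{addedge1} gives $\gamma_R(G)\le\gamma_R(G-uv)\le\gamma_R(G)+1$, so for each edge $uv$ it suffices to produce one $\gamma_R$-function of $G$ that survives the deletion, i.e. in which $uv$ is not the sole edge carrying a $2$ to a $0$. If an endpoint lies in $V^-(G)$, Theorem~\ref{minus1} supplies a function labelling it $1$; Observation~\ref{o2} forbids the other endpoint from being a $2$, and one checks this function survives the deletion. There remains the case $u,v\in V^=(G)$. If $\langle V^=(G)\rangle$ is a component it is a clique $K_m$ with $m\ge3$ (a smaller clique would place its vertices in $V^-(G)$), and relocating the single $2$ to a third vertex shows the deletion is harmless; otherwise part~(iii) gives $u,v$ neighbours in $V^-(G)$. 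The real obstacle, which I expect to be the heart of the whole theorem, is to exclude the \emph{rigid} configuration in which every $\gamma_R$-function assigns $\{u,v\}$ the pair $\{0,2\}$ with the $2$ acting as the unique $2$-neighbour of the $0$. Relabelling such a function by sending both $u,v$ to $1$ is weight-preserving and, unless it becomes invalid, violates rigidity; forcing invalidity extracts private label-$0$ neighbours of $u$ and of $v$. The goal is then to locate from these a pair of non-adjacent vertices that \emph{no} $\gamma_R$-function labels $\{1,2\}$ --- exactly the obstruction witnessed by the two end-vertices of $P_4 = a-u-v-b$, whose addition leaves $\gamma_R$ unchanged --- thereby contradicting $G\in\mathcal{R}_{CEA}$. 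Verifying that such a non-adjacent, never-$2$ pair must always exist is, I anticipate, the decisive and hardest step.
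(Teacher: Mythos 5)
Your submission is a plan rather than a proof: at each of the three places where the theorem actually requires work you stop and announce that the remaining step is ``subtle'', ``the aim'', or ``the decisive and hardest step''. Concretely: in (ii) the existence of $f_x$ with $f_x(x)=0$ rests on your unproved assertion that some minimum RD-function of $G-x$ of weight $\gamma_R(G)$ places a $2$ on $N_G(x)$; this is precisely the nontrivial content (the paper rules out the class of vertices that every $\gamma_R$-function labels $2$ by combining (i) with Observation~\ref{o2} to get $N[u]=V^=(G)$ and then shifting a $2$ within that clique to drop the weight), and your side remark that a universal $x$ ``forces $G$ to be complete'' is false as stated. The reverse implication of (ii) is likewise only flagged, not proved. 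In (iii) you assemble the correct configuration ($N[x]=V^=(G)$, a non-adjacent pair $x,p$ with $p\in V^-(G)$) but never reach a contradiction; the missing move is to take an edge $uw$ with $u\in V^-(G)$, $w\in V^=(G)$, obtain a $\gamma_R$-function with $u\mapsto 1$, $x\mapsto 2$, force $w\mapsto 0$ via Observation~\ref{o2}, and then transfer the $2$ from $x$ to $w$ while sending $u$ and $x$ to $0$, saving weight.

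Part (iv) is where the gap is fatal. Your reductions (endpoint in $V^-(G)$; $\left\langle V^=(G)\right\rangle$ a clique component) are fine and match Theorem~\ref{cvruer}, but the whole theorem lives in the remaining case $u,v\in V^=(G)$ with $\left\langle V^=(G)\right\rangle$ not a component, and there you end by conjecturing that one can always locate a non-adjacent pair that no $\gamma_R$-function labels $\{1,2\}$ via a path $a$--$u$--$v$--$b$ of private neighbours. You give no argument that such a pair exists or is non-adjacent, and the paper's actual proof does not proceed this way: it fixes a bad edge $x_1x_2$, extracts from Theorem~\ref{addedge1} a $\gamma_R$-function of $G-x_1x_2$ with labels $\{1,2\}$ on $x_1,x_2$, and then runs a long chain of eliminations --- $x_1\notin V^+(G_{12})\cup V^=(G_{12})$, every common neighbour of $x_1,x_2$ lies in $V^=(G)$, in fact $V^=(G)=\{x_1,x_2\}$ with $N(x_1)\cap N(x_2)=\emptyset$, the sets $N[x_i]-\{x_j\}$ and then $(N(x_1)\cup N(x_2))-\{x_1,x_2\}$ induce complete graphs --- before a final relabelling produces a $\gamma_R$-function with $x_1\mapsto 1$, contradicting $x_1\in V^=(G)$. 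None of that structure is anticipated by your sketch, so the central claim of the theorem remains unestablished.
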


\begin{proof}
 For  complete graphs the results are obvious. So, let $G$ be noncomplete. 

(ii)        By Theorem \ref{minus1}, $V^=(G) = A\cup B \cup C$, where 
			$A = \{x \in V(G) \mid  f(x) = 0 \mbox{ for each } \gamma_R-\mbox{function } f \mbox{ on } G\}$, 
			$B =  \{x \in V(G) \mid  \gamma_R(G-x) = \gamma_R(G) \mbox{ and } f(x) = 2 \mbox{ for each } \gamma_R-\mbox{function } f \mbox{ on } G \}$, and 
			$C = \{x \in V(G) \mid  \mbox{ there are }  \gamma_R-\mbox{functions } f_x \mbox{ and } g_x\mbox{ with } \{f_x(x), g_x(x)\} = \{0,2\}\}$. 
			
		 Theorem \ref{addedge1} implies that  $A$ is empty.
    Suppose $B$ is not empty, and  $u \in B$. 
		By (i) we have  $B \subseteq V^=(G) \subseteq N[u]$. 
		Now  Observation \ref{o2} and Theorem \ref{minus1} lead to  $N[u]=V^=(G)$ and  $B \subsetneq V^=(G)$. 
		 Since $A = \emptyset$,  there is $v \in C$. 
	 But then there exists a $\gamma_R$-function $f$ on $G$ with $f(v)=2$. 
		Define a RD-function $f^\prime$ on $G$ as follows: 	 $f^\prime(u)=0$  and $f^\prime(x) = f(x)$ 
		for all $x \in V(G-x)$. Since $f^\prime$ has a weight less than $\gamma_R(G)$,
		we arrive to a contradiction. 		Thus $V^=(G) = C$, as required. 
		\medskip
    
(iii) Assume to the contrary, that $N[v] = V^=(G)$ for some $v \in V^=(G)$. 
     Clearly, there are $u \in V^-(G)$ and $w \in V^=(G)$ which are adjacent.
      Since $uv \not\in E(G)$ and $G$ is in $\mathcal{R}_{CEA}$, there is 
      a $\gamma_R$-function $f^{\prime\prime}$ on $G$ with 
      $f^{\prime\prime}(u)=1$ and $f^{\prime\prime}(v)=2$.
      But then $f^{\prime\prime}(w)=0$ and $f^{\prime\prime\prime} = 
      ((V_0^{f^{\prime\prime}}(G)-\{w\})\cup \{u,v\}; V_1^{f^{\prime\prime}}-\{u\}; 
      (V_2^{f^{\prime\prime}}-\{v\}) \cup \{w\})$ is a RD-function on $G$ 
      with weight less than $\gamma_R(G)$, a contradiction.     
\medskip
      
(iv) Assume $G \in \mathcal{R}_{CEA}-\mathcal{R}_{UER}$. 
      Then there is an edge $x_1x_2 \in E(G)$ with $\gamma_R(G_{12}) > \gamma_R(G)$, 
      where $G_{12} = G-x_1x_2$. Now by Theorem \ref{addedge1}, applied to $G_{12}$ and $x_1x_2$, 
      there is a $\gamma_R$-function $f$ on $G_{12}$ with $\{f(x_1), f(x_2)\}= \{1,2\}$, 
      say without loss of generality, $f(x_1) = 2$. Note also that 
      $f_{12} = (V_0^f(G) \cup\{x_2\}; V_1^f(G)-\{x_2\}; V_2^f(G))$ is a $\gamma_R$-function 
      on $G$. Since $G$ is in $\mathcal{R}_{CEA}$, we already know that 
      $V(G) = V^ =(G) \cup V^-(G)$. If there is a $\gamma_R$-function 
      $f^\prime$ on $G$ with $f^\prime(x_i)=1$, then $f^\prime$ is a RD-function on $G_{12}$, 
      a contradiction. Thus, $x_1, x_2 \in V^=(G) = C$.          
      
      Suppose that $x_1 \in V^+(G_{12}) \cup V^=(G_{12})$. 
      Then $\gamma_R(G-x_1) = \gamma_R(G_{12}-x_1) \geq \gamma_R(G_{12}) > \gamma_R(G)$. 
      This immediately implies $x_1 \in V^+(G)$, a contradiction. 
      
      So, in what follows let $x_1 \in V^-(G_{12})$. 
      If $\left\langle V^=(G)\right\rangle$ is a component of $G$, then 
      $\gamma_R(G_{12}) = \gamma_R(G)$, a contradiction. 
      Hence each vertex in $V^=(G)$ is adjacent to a vertex in $V^-(G)$ (by (iii)). 
      Assume first that $y \in V^-(G)$ is adjacent to both $x_1$ and $x_2$. 
      Then there is a $\gamma_R$-function $g$ on $G$ with $g(y) = 1$. 
      This implies $g(x_1) = g(x_2) = 0$ (recall that $x_1,x_2 \in V^=(G)$). 
      But then $g$ is a RD-function on $G_{12}$ with weight less than $\gamma_R(G_{12})$, 
      a contradiction. Thus, all common neighbors of $x_1$ and $x_2$ are in $V^=(G)$. 
      Suppose $x_3 \in V^=(G)$ and $u \in N(x_1) \cap V^-(G)$. 
			If  $ux_3 \not \in E(G)$ then there is a $\gamma_R$-function $f_1$ on $G$ 
			with $f_1(x_3) = 2$ and $f_1(u)=1$.  Since $f_1$ is a RD-function on $G_{12}$, 
			we arrive to a contradiction. 
	    Therefore $N[x_1] = N[x_3]$, which implies $f(x_3) = 0$. 
			But then   $f_2 = (V_0^f-\{x_3\}\cup \{x_1,x_2\}; V_1^f-\{x_2\}; V_2^f-\{x_1\} \cup \{x_3\})$ 
			is a RD-function on $G_{12}$ of weight less than $\gamma_R(G_{12})$, 
			a contradiction.

      Thus, $V^=(G) = \{x_1,x_2\}$ and $N(x_1) \cap N(x_2) = \emptyset$. 
      Let $N(x_1)-\{x_2\} = \{y_1,y_2,..,y_r\}$ and $N(x_2)-\{x_1\} = \{z_1,z_2,..,z_s\}$. 
      If there are nonadjacent $y_i$ and $y_j$, then there is a $\gamma_R$-function
       $g$ on $G$ with $\{g(y_i), g(y_j)\} = \{1,2\}$. Hence $g(x_1) = 0$ which 
       implies that $g$ is a RD-function on $G_{12}$, a contradiction. Thus 
       $\left\langle N[x_i]-\{x_j\}\right\rangle$ is a complete graph for $\{i,j\} = \{1,2\}$.
       
       Assume now that $y_iz_j \not\in E(G)$. Then, without loss of generality, 
       there is a $\gamma_R$-function $l$ on $G$ with $l(y_i)= 2$ and $l(z_j)  =1$. 
      Since $x_2 \in V^=(G)$,  $l(x_2) = 0$. If $l(x_1) \not= 2$, then $l$ is a RD-function on $G_{12}$, 
       a contradiction. Thus $l(x_1)=2$. But then 
       $l_1  = (V_0^l(G)-\{x_2\}; V_1^l(G) \cup \{x_1,x_2\}; V_2^l(G)-\{x_1\})$ 
       is a $\gamma_R$-function on $G$ and $l_1(x_1) = l_1(x_2) = l_1(y_j) =1$, a contradiction.
       So, $(N(x_1)\cup N(x_2)) - \{x_1,x_2\}$ induce a complete graph. 
       Now, let $h$ be any $\gamma_R$-function on $G$ with $h(x_1) = 2$ and $h(z_1) = 1$. 
       But then  
       $h^\prime  = (V_0^h(G), (V_1^h(G) - \{z_1\}) \cup \{x_1\}; (V_2^h(G)-\{x_1\} \cup \{z_1\})$ 
       is a $\gamma_R$-function on $G$ with $h^\prime(x_1) = 1$, a contradiction.
\end{proof}

\begin{theorem}\label{cvruer}
Let $V^-(G)$ contain a vertex cover of a graph $G$. Then $G$ is in $\mathcal{R}_{UER}$.
 In particular, if  $G$ is in $\mathcal{R}_{CVR}$, then $G$ is in $\mathcal{R}_{UER}$.
\end{theorem}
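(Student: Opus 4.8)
The plan is to show directly that for every edge $xy \in E(G)$ one has $\gamma_R(G-xy) = \gamma_R(G)$, which is precisely the defining property of $\mathcal{R}_{UER}$. The natural starting point is Theorem \ref{addedge1}: since $x$ and $y$ are non-adjacent in $G-xy$ and adding back $xy$ recovers $G$, applying that theorem to the pair $(G-xy, xy)$ gives $\gamma_R(G-xy) \geq \gamma_R(G) \geq \gamma_R(G-xy)-1$. Hence the only two possibilities are $\gamma_R(G-xy) = \gamma_R(G)$ and $\gamma_R(G-xy) = \gamma_R(G)+1$, and it suffices to rule out the latter, i.e. to exhibit an RD-function on $G-xy$ of weight $\gamma_R(G)$.

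To produce such a function I would invoke the hypothesis. Let $S \subseteq V^-(G)$ be a vertex cover of $G$, and fix an arbitrary edge $xy \in E(G)$. Since $S$ is a vertex cover, at least one endpoint lies in $S$; say $x \in S \subseteq V^-(G)$. By Theorem \ref{minus1}, membership $x \in V^-(G)$ guarantees a $\gamma_R$-function $f$ on $G$ with $x \in V_1^f$, that is $f(x) = 1$.

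The key step is to verify that this same $f$ remains an RD-function after deleting the edge $xy$. Removing $xy$ alters only the neighborhoods of $x$ and $y$, so only these two vertices could newly violate the Roman condition. The vertex $x$ carries label $1$, so it imposes no constraint. The vertex $y$ is relevant only if $f(y) = 0$, in which case it needs a neighbor of label $2$; but since $f(x) = 1 \neq 2$, the vertex $x$ was never the label-$2$ witness for $y$, so every label-$2$ neighbor of $y$ in $G$ survives in $G-xy$. Thus $f$ is an RD-function on $G-xy$ of weight $\gamma_R(G)$, which yields $\gamma_R(G-xy) \leq \gamma_R(G)$ and hence equality. As $xy$ was arbitrary, $G \in \mathcal{R}_{UER}$.

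I do not expect a genuine obstacle here: the entire argument rests on the single observation that a label-$1$ vertex never serves as the mandatory label-$2$ neighbor of a label-$0$ vertex, so an edge incident with a label-$1$ vertex is \emph{free} to delete without disturbing the Roman condition. The ``in particular'' clause is then immediate: if $G \in \mathcal{R}_{CVR}$ then $V^-(G) = V(G)$, and $V(G)$ is trivially a vertex cover of $G$ contained in $V^-(G)$, so the first assertion applies.
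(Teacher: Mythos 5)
Your proof is correct and follows essentially the same route as the paper: pick an endpoint of the edge lying in $V^-(G)$, use Theorem \ref{minus1} to get a $\gamma_R$-function assigning it label $1$, and observe that this function survives the edge deletion. Your version merely spells out in more detail why the label-$1$ endpoint makes the edge safe to remove and why $\gamma_R(G-e)\geq\gamma_R(G)$ always holds.
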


\begin{proof}
Let $xy \in E(G)$.  Since $V^-(G)$ contains  a vertex cover of $G$, 
at least one of $x$ and $y$ is in $V^-(G)$, say $x \in V^-(G)$.  
Then there is a $\gamma_R$-function $f$ on $G$ with $f(x)=1$. 
This immediately implies that for each edge $e \in E(G)$ incident to $x$, 
 $f$ is a RD-function on $G-e$. Since always
 $\gamma_R(G-e) \geq \gamma_R(G)$, we obtain  $\gamma_R(G-e) = \gamma_R(G)$. 
Since $V^-(G)$ contains  a vertex cover of $G$, a graph $G$ is in $\mathcal{R}_{UER}$.  
The rest is obvious.
\end{proof}

\begin{lema}\cite{cr}\label{ueaknown}
Let $G$ be a graph of order $n \geq 3$. A graph $G$ is in $\mathcal{R}_{UEA}$  
 if and only if for every $\gamma_R$-function $f = (V_0, V_1, V_2)$, $V_1 = \emptyset$. 
\end{lema}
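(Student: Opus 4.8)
The plan is to translate membership in $\mathcal{R}_{UEA}$ into a statement about which labels a $\gamma_R$-function can carry, for which Theorem~\ref{addedge1} is the natural tool. Since $\mathcal{R}_{UEA}=1$-$EA^=_{\gamma_R}$, a graph $G$ lies in $\mathcal{R}_{UEA}$ precisely when $\gamma_R(G+xy)=\gamma_R(G)$ for every pair of non-adjacent vertices $x,y$. By Theorem~\ref{addedge1} the only alternative is $\gamma_R(G+xy)=\gamma_R(G)-1$, and this occurs exactly when some $\gamma_R$-function $f$ has $\{f(x),f(y)\}=\{1,2\}$. Hence $G\in\mathcal{R}_{UEA}$ is equivalent to the condition, call it $(\star)$, that no $\gamma_R$-function assigns the labels $1$ and $2$ to a pair of non-adjacent vertices, and the whole lemma reduces to proving that $(\star)$ holds if and only if every $\gamma_R$-function $f=(V_0,V_1,V_2)$ satisfies $V_1=\emptyset$.

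The direction from ``$V_1=\emptyset$ for every $\gamma_R$-function'' to $(\star)$ is immediate: if no $\gamma_R$-function ever uses the label $1$, then in particular none assigns $1$ to an endpoint of a non-adjacent pair, so $(\star)$ holds and $G\in\mathcal{R}_{UEA}$. This half needs nothing beyond the definitions.

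For the converse I would argue by contraposition: assuming some $\gamma_R$-function $f$ has $V_1^f\neq\emptyset$, I would exhibit a violation of $(\star)$. First pick $v\in V_1^f$. The crucial input is Observation~\ref{o2}: no edge of $G$ joins $V_1^f$ to $V_2^f$, so $v$ is non-adjacent to \emph{every} vertex of $V_2^f$. Therefore, if $V_2^f\neq\emptyset$, any choice of $w\in V_2^f$ yields a non-adjacent pair $v,w$ with $\{f(v),f(w)\}=\{1,2\}$, so $f$ itself already breaks $(\star)$. This settles the principal case with no extra effort.

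The hard part will be the residual case $V_2^f=\emptyset$. Then every label-$0$ vertex would lack a label-$2$ neighbour, forcing $V_0^f=\emptyset$ as well, so $f\equiv 1$ and $\gamma_R(G)=n$; equivalently $\Delta(G)\le 1$ and $G$ is a disjoint union of copies of $K_1$ and $K_2$. Here $f$ alone carries no $2$, so I must instead construct a second $\gamma_R$-function. Provided $G$ has at least one edge, I would relabel the two endpoints of one such edge by $2$ and $0$ and assign $1$ to every other vertex; this keeps the weight equal to $n$, so it is again a $\gamma_R$-function, and any vertex $c$ outside that edge (one exists since $n\ge 3$) carries label $1$ and is non-adjacent to the endpoint labelled $2$, once more violating $(\star)$. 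The only configuration this misses is the edgeless graph $\overline{K_n}$, which is genuinely exceptional, so the argument needs the standing assumption that $G$ is isolate-free (equivalently, that $G$ has an edge). Modulo that hypothesis the contraposition is complete, and together with the easy direction this establishes the stated equivalence.
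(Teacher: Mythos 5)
The paper does not prove this lemma: it is quoted from Chellali and Jafari Rad \cite{cr}, so there is no internal proof to compare against. Judged on its own, your argument is correct and uses exactly the tools the paper makes available. The reduction of membership in $\mathcal{R}_{UEA}$ to the condition $(\star)$ via Theorem~\ref{addedge1} is right, the easy direction is immediate, and in the converse the appeal to Observation~\ref{o2} (no edge joins $V_1^f$ and $V_2^f$) instantly produces a non-adjacent pair labelled $1$ and $2$ whenever $V_1^f\neq\emptyset$ and $V_2^f\neq\emptyset$. Your handling of the residual case $V_2^f=\emptyset$ (so $f\equiv 1$, $\gamma_R(G)=n$, and $G$ is a disjoint union of $K_1$'s and $K_2$'s) is also sound: one edge suffices to build the second $\gamma_R$-function that violates $(\star)$.

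Two small remarks. First, you are right that the statement as printed fails for the edgeless graph $\overline{K_n}$, $n\ge 3$, which lies in $\mathcal{R}_{UEA}$ while its unique $\gamma_R$-function is identically $1$; this is not a defect of your proof but a missing hypothesis in the quoted lemma (the paper explicitly excludes edgeless graphs before its Venn-diagram analysis, and every later application of the lemma is to graphs with edges, so nothing downstream breaks). Second, the needed hypothesis is precisely ``$G$ has at least one edge,'' not ``$G$ is isolate-free''; the two are not equivalent (e.g.\ $K_1\cup K_2$ has an edge but an isolate), and your residual-case construction only requires one $K_2$ component, so the weaker assumption is the correct one to record.
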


In order to establish a Venn diagram representing the classes 
$\mathcal{R}_{CVR}$,  $\mathcal{R}_{UVR}$,   $\mathcal{R}_{CER}$, $\mathcal{R}_{UER}$, $\mathcal{R}_{CEA}$,
    and  $\mathcal{R}_{UEA}$, we do not consider the cases that are  vacuously true. 
For example (a) the complete graphs are in both $\mathcal{R}_{CEA}$  and  $\mathcal{R}_{UEA}$, 
and (b)  the edgeless graphs are in both  $\mathcal{R}_{CER}$ and $\mathcal{R}_{UER}$.
Therefore we  exclude edgeless graphs and complete graphs. 

To continue, we need to relabel the Venn diagram of Fig.\ref{fig:regions0} 
in $11$ regions $R_1-R_{11}$ as shown in Fig. \ref{fig:regions}. 

	\begin{figure}[htbp]
	\centering
		\includegraphics{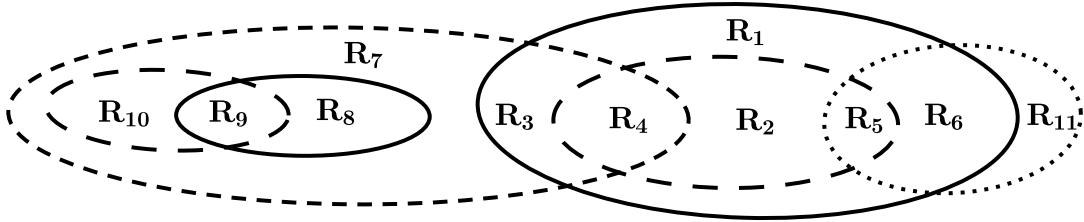}
	\caption{Regions of Venn diagram: general case}
	\label{fig:regions}
\end{figure}

\begin{theorem}\label{Rclassification}
Classes $\mathcal{R}_{CVR}$, $\mathcal{R}_{CEA}$, $\mathcal{R}_{CER}$, $\mathcal{R}_{UVR}$, 
$\mathcal{R}_{UER}$ and $\mathcal{R}_{UEA}$  
are related as shown in the  Venn diagram of Fig. \ref{fig:regions0}.
\end{theorem}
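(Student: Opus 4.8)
The plan is to split the verification of the diagram into two independent tasks: first pinning down the \emph{logical skeleton} of Fig.~\ref{fig:regions0} (which containments hold and which intersections are forced empty), and then \emph{populating} each of the regions $R_1$--$R_{11}$ of Fig.~\ref{fig:regions} with an explicit witnessing graph. Since edgeless and complete graphs have already been excluded, I would begin with the three within-axis disjointness facts. Each defining condition is universally quantified over all single-vertex deletions, all single-edge deletions, or all single-edge additions; hence no graph can simultaneously force $\gamma_R$ to drop and to remain fixed under the same operation. This gives at once $\mathcal{R}_{CVR}\cap\mathcal{R}_{UVR}=\emptyset$, $\mathcal{R}_{CER}\cap\mathcal{R}_{UER}=\emptyset$, and $\mathcal{R}_{CEA}\cap\mathcal{R}_{UEA}=\emptyset$.

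Next I would assemble the containment relations already established. Theorem~\ref{cvruer} yields $\mathcal{R}_{CVR}\subseteq\mathcal{R}_{UER}$ and Theorem~\ref{ceainuer}(iv) yields $\mathcal{R}_{CEA}\subseteq\mathcal{R}_{UER}$, so both ``changing'' classes for vertex deletion and edge addition sit inside the single ``unchanging'' class $\mathcal{R}_{UER}$. Combining these with $\mathcal{R}_{CER}\cap\mathcal{R}_{UER}=\emptyset$ forces $\mathcal{R}_{CER}$ to be disjoint from each of $\mathcal{R}_{CVR}$ and $\mathcal{R}_{CEA}$. Together these relations determine the qualitative shape of the diagram: the only pairwise interactions left unconstrained are those of $\mathcal{R}_{CER}$, $\mathcal{R}_{UVR}$, and $\mathcal{R}_{UEA}$ with one another and with the remaining classes, and it is precisely these that the example phase must show are realizable.

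For the realization task I would, for each region declared nonempty in Fig.~\ref{fig:regions}, exhibit a small explicit graph whose Roman domination number is easy to control -- paths, cycles, complete bipartite graphs, coronas, and minor modifications of these are the natural candidates -- and certify its membership in exactly the intended combination of classes. Rather than recomputing $\gamma_R$ after every modification, I would verify memberships through the functional characterizations: Theorem~\ref{minus1} reduces the vertex-deletion behaviour to the existence of a $\gamma_R$-function assigning a prescribed value at a vertex, Theorem~\ref{addedge1} reduces edge addition to the existence of a $\gamma_R$-function realizing the pattern $\{1,2\}$ on a non-edge, Observation~\ref{o2} constrains the structure of $V_1^f$ and $V_2^f$, and Lemma~\ref{ueaknown} turns membership in $\mathcal{R}_{UEA}$ into the clean statement that no $\gamma_R$-function ever uses the label $1$. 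I would collect the eleven witnesses into a single table recording, for each graph, its status in all six classes.

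The main obstacle is this second task. The delicate regions are those demanding simultaneous membership in two changing classes (for example $\mathcal{R}_{CVR}\cap\mathcal{R}_{CEA}$, if it is declared nonempty) and those demanding a changing-class membership alongside a non-membership that must be certified for \emph{every} relevant operation; here the universally quantified non-membership conditions are the ones that resist a short argument, and I expect to spend most of the effort checking them via the $\{1,2\}$-pattern and label-$1$ criteria above. A secondary point, but a necessary one, is to confirm that every region the diagram marks as empty is already forced empty by the skeleton of the first two paragraphs, so that the three disjointness facts and the two containments, together with the eleven examples, suffice and no additional exclusion lemma is needed.
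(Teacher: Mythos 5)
Your overall strategy---a ``skeleton'' of containments and forced disjointnesses followed by eleven explicit witnesses for the regions $R_1$--$R_{11}$---is exactly the paper's, and the pieces you do list (the three within-axis disjointnesses, $\mathcal{R}_{CVR}\subseteq\mathcal{R}_{UER}$ from Theorem~\ref{cvruer}, and $\mathcal{R}_{CEA}\subseteq\mathcal{R}_{UER}$ from Theorem~\ref{ceainuer}(iv)) are all correct. The gap is that your skeleton is incomplete, and you assert the opposite: you claim the interactions of $\mathcal{R}_{UVR}$ and $\mathcal{R}_{UEA}$ with the other classes are ``left unconstrained'' and will be settled by the example phase, and you conclude that ``no additional exclusion lemma is needed.'' That is false. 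The diagram of Fig.~\ref{fig:regions0} encodes two further relations that examples cannot establish, because examples can only certify that a region is nonempty, never that it is empty: namely $\mathcal{R}_{UVR}\subseteq\mathcal{R}_{UEA}$ and $\mathcal{R}_{CVR}\cap\mathcal{R}_{UEA}=\emptyset$.

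Both follow from combining Lemma~\ref{ueaknown} with Theorem~\ref{minus1}, which is how the paper closes the skeleton. By Lemma~\ref{ueaknown}, $G\in\mathcal{R}_{UEA}$ (for $n\ge 3$) iff every $\gamma_R$-function on $G$ has $V_1=\emptyset$, and by Theorem~\ref{minus1} this is equivalent to $V_R^-(G)=\emptyset$. If $G\in\mathcal{R}_{UVR}$ then $V(G)=V_R^=(G)$, so $V_R^-(G)=\emptyset$ and $G\in\mathcal{R}_{UEA}$; if $G\in\mathcal{R}_{CVR}$ then $V_R^-(G)=V(G)\neq\emptyset$, so $G\notin\mathcal{R}_{UEA}$. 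Without these two facts your skeleton does not force the emptiness of the regions $\mathcal{R}_{UVR}\setminus\mathcal{R}_{UEA}$ and $\mathcal{R}_{CVR}\cap\mathcal{R}_{UEA}$ that the Venn diagram omits, so the ``confirmation'' step you defer to at the end of your proposal would in fact fail as stated. Adding these two lemmas (and then supplying the eleven witnesses, as the paper does with small trees, cycles, stars and modifications thereof) repairs the argument.
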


\begin{proof}
By Theorems \ref{ceainuer}  and  \ref{cvruer} we have 
$\mathcal{R}_{CEA} \cup \mathcal{R}_{CVR} \subseteq \mathcal{R}_{UER}$. 
It is obvious that all  $\mathcal{R}_{UER} \cap \mathcal{R}_{CER}$, 
$\mathcal{R}_{UVR} \cap \mathcal{R}_{CVR}$, and $\mathcal{R}_{UEA} \cap \mathcal{R}_{CEA}$ 
are empty. 
If a graph $G$ is in $\mathcal{R}_{UVR}$, then clearly $V(G) = V^=(G)$.  
Lemma \ref{ueaknown} now implies $\mathcal{R}_{UVR} \subseteq \mathcal{R}_{UEA}$. 
If $G \in \mathcal{R}_{CVR}$ then $V^-(G) \not= \emptyset$ and by Lemma \ref{ueaknown}, 
 $\mathcal{R}_{CVR}$ and $\mathcal{R}_{UEA}$ are disjoint.

The next obvious claim  shows that none of  regions  $R_1-R_{11}$ is empty.
The {\em double star} $S_{m,n}$, where $m,n \geq 2$, 
is the graph consisting of the union of two stars $K_{1,n}$ and $K_{1,m}$
 together with an edge  joining their centers.

{\bf Claim \ref{Rclassification}.1} 
\begin{itemize}
\item[(i)]  Any double star $S_{p,q}$ with $p,q \geq 3$,  is in $R_1$. 
\item[(ii)] The graph $G$ obtained from $S_{2,2}$ by subdividing once 
                    the edge joining the support vertises of $S_{2,2}$, is in $R_2$. 
\item[(iii)] The graph $G$ obtained from $K_4$ by adding a new vertex $v$,
                     joining it to three vertices of the $K_4$, and then subdividing once 
                     each of the edges incident to $v$, is in $R_3$. 
\item[(iv)] $C_6$ is in $R_4$. 
\item[(v)]  $K_{1,2}$ is in $R_5$. 
\item[(vi)] $K_{1,n}$, $n \geq 3$ is in $R_6$. 
\item[(vii)] The double star $S_{2,2}$   is in $R_7$.
\item[(viii)] $C_7$ is in $R_8$.  
\item[(ix)]   $C_4$ is in $R_9$.
\item[(x)]  The graph obtained from $2$ disjoint copies of $P_5$ by joining their central vertices is in $R_{10}$.
\item[(xi)]  $K_1 \cup K_{1,2}$  is in $R_{11}$.
 \end{itemize}
\end{proof}

\begin{lema} \cite{rv1} \label{er3}
Let   a graph $G$ have at least one edge. 
 Then $G$  is in   $\mathcal{R}_{CER}$  if and only if  $\Delta(G) \geq 2$ and 
$G$ is a forest  in which each component is an isolated vertex or a star of order at least $3$.
	\end{lema}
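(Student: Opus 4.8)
The plan is to reduce membership in $\mathcal{R}_{CER}$ to a labeling condition and then read off the structure of $G$. Applying Theorem \ref{addedge1} to the graph $G-e$ and the edge $e$ gives $\gamma_R(G-e) \geq \gamma_R(G)$ for every edge $e$, with equality exactly when some $\gamma_R$-function of $G$ is still a Roman dominating function of $G-e$. Hence $G \in \mathcal{R}_{CER}$ if and only if for every edge $e=xy$ no $\gamma_R$-function of $G$ survives the deletion of $e$. Since deleting $xy$ alters only the neighborhoods of $x$ and $y$, a $\gamma_R$-function $f$ fails on $G-xy$ precisely when, up to swapping $x$ and $y$, one has $f(x)=0$, $f(y)=2$, and $y$ is the only label-$2$ neighbor of $x$.

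For the forward direction I would fix any $\gamma_R$-function $f$. Since $f$ must fail on $G-e$ for every edge $e$, the failure description forces every edge $xy$ to satisfy $\{f(x),f(y)\}=\{0,2\}$ with the label-$0$ endpoint having the label-$2$ endpoint as its unique label-$2$ neighbor; every other combination ($\{0,0\}$, $\{2,2\}$, $\{1,1\}$, $\{1,2\}$, $\{0,1\}$, or a $\{0,2\}$ edge whose $0$-endpoint has a second label-$2$ neighbor) would leave $f$ Roman dominating on $G-e$. I would then read the structure off these constraints: $V_1^f$ is incident to no edge and so consists of isolated vertices; $V_0^f$ and $V_2^f$ are independent; every edge joins $V_0^f$ to $V_2^f$; and each vertex of $V_0^f$, having no neighbor in $V_0^f \cup V_1^f$ and exactly one label-$2$ neighbor, has degree one. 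Thus every nontrivial component of $G$ is a star centered at a vertex of $V_2^f$ with leaves in $V_0^f$, and $G$ is a forest whose components are isolated vertices or stars. A star of order $2$ is excluded because $\gamma_R(K_2-e)=2=\gamma_R(K_2)$ would contradict $G \in \mathcal{R}_{CER}$, so the stars have order at least $3$; and the existence of an edge gives $\Delta(G)\geq 2$.

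For the converse I would first observe that on such a forest the $\gamma_R$-function is unique: every isolated vertex must receive label $1$, and on each star $K_{1,n}$ with $n\geq 2$ the only minimum-weight labeling assigns $2$ to the center and $0$ to the leaves, any labeling with center label different from $2$ being forced up to weight at least $n+1>2$. Deleting an arbitrary edge $e$ of $G$ detaches a leaf $y$ from its center, so $y$ becomes an isolated vertex still labeled $0$; the unique $\gamma_R$-function is therefore not Roman dominating on $G-e$, whence $\gamma_R(G-e)>\gamma_R(G)$ and $G \in \mathcal{R}_{CER}$.

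The main obstacle is the step that converts the single hypothesis ``no $\gamma_R$-function survives any edge deletion'' into the rigid local picture in which every edge is a $0$--$2$ edge whose $0$-endpoint has degree one. The delicate part is the degree-one conclusion: it requires combining independence of $V_0^f$, the fact that $V_1^f$ is edgeless, and the uniqueness of the dominating label-$2$ neighbor, and it is exactly this that collapses each component to a star. Once this local rigidity is established, the forest structure, the order-at-least-$3$ condition, the uniqueness of the optimal labeling, and the converse all follow with little further work.
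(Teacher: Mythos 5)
The paper does not prove Lemma~\ref{er3}: it is quoted from \cite{rv1} as a known result, so there is no in-paper argument to compare against. Your proof is correct and self-contained: the reduction of membership in $\mathcal{R}_{CER}$ to the statement that no $\gamma_R$-function of $G$ remains a Roman dominating function after any single edge deletion is sound; the local analysis forcing every edge to be a $0$--$2$ edge whose $0$-endpoint has the $2$-endpoint as its unique label-$2$ neighbor (hence has degree one, since $V_0^f$ is independent and no edge meets $V_1^f$) correctly collapses each nontrivial component to a star; and the exclusion of $K_2$-components together with the uniqueness of the $\gamma_R$-function on a disjoint union of isolated vertices and stars $K_{1,n}$, $n\geq 2$, settles the converse.
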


\begin{remark}\label{cearem}	
Using Lemma \ref{er3} it is easy to see that the following assertions hold.
\begin{itemize}
\item[(i)] 	A graph $G$ is in $R_5$ if and only if  $G = nK_{1,2}$, $n \geq 1$.
\item[(ii)]  A graph $G$ is in $R_6$ if and only if  each component of $G$ is  a star of order at least $4$.
\item[(iii)] A graph $G$ is in $R_{11}$ if and only if $\delta(G) = 0$ and 
                    each component of $G$ is an isolated vertex or a star of order at least $3$.
\end{itemize}
	\end{remark}

By Theorem \ref{Rclassification},  Claim \ref{Rclassification}.1}  and Remark \ref{cearem}	we immediately obtain:

\begin{corollary}\label{R11}
For connected graphs:  (a) the subset $R_{11}$ is empty, and  
(b) all $R_1, R_2,\dots,R_{10}$ are nonempty.
\end{corollary}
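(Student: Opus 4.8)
The plan is to read both parts directly off the characterization in Remark~\ref{cearem} and the list of witnesses in Claim~\ref{Rclassification}.1, since the corollary is asserted to be an immediate consequence of these together with Theorem~\ref{Rclassification}.

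For part (a), I would start from Remark~\ref{cearem}(iii), which states that $G \in R_{11}$ precisely when $\delta(G) = 0$ and every component of $G$ is either an isolated vertex or a star of order at least $3$. The decisive observation is that $\delta(G) = 0$ forces $G$ to contain a vertex of degree $0$, i.e.\ an isolated vertex. If $G$ is assumed connected, the only way it can possess an isolated vertex is $G \simeq K_1$; but $K_1$ is edgeless (indeed complete) and has been explicitly excluded from the discussion preceding the relabeling of the diagram. Hence no connected graph under consideration lies in $R_{11}$, which gives (a).

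For part (b), I would simply check that the witnesses supplied in Claim~\ref{Rclassification}.1 for the regions $R_1,\dots,R_{10}$ are all connected graphs, so that each of these regions remains nonempty once attention is restricted to connected graphs. This is a matter of inspection: the double stars $S_{p,q}$ with $p,q\geq 3$ and $S_{2,2}$, the two subdivision graphs in (ii) and (iii), the cycles $C_4$, $C_6$, $C_7$, the stars $K_{1,2}$ and $K_{1,n}$ with $n\geq 3$, and the graph obtained by joining the central vertices of two copies of $P_5$ are each connected. By contrast, the witness exhibited for $R_{11}$, namely $K_1 \cup K_{1,2}$, is the only disconnected one in the list, which is fully consistent with part (a).

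I expect no genuine obstacle here, as the statement is a bookkeeping consequence of the already established characterization and example list. The single point that deserves a moment of care is in part (a): one must note that connectivity combined with $\delta(G)=0$ leaves only the trivial graph $K_1$, which is excluded, so the emptiness of $R_{11}$ is not a statement about Roman domination at all but about the structural constraint $\delta(G)=0$ clashing with connectivity. Everything in part (b) is then settled by direct inspection of the witnesses.
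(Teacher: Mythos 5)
Your proof is correct and follows exactly the route the paper intends: the paper derives the corollary "immediately" from Remark~\ref{cearem}(iii) (whose condition $\delta(G)=0$ is incompatible with connectivity except for the excluded $K_1$) together with the observation that the witnesses in Claim~\ref{Rclassification}.1 for $R_1,\dots,R_{10}$ are all connected. You have merely made explicit the same two checks the paper leaves to the reader.
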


Now our  aim is to determine where trees of order at least $3$ fit into the subsets of the Venn diagram.

\begin{corollary}\label{trees}
For trees of order $n \geq 3$,  (a) all regions $R_3, R_4, R_8, R_9$ and $R_{11}$ 
of the Venn diagram (see Fig. \ref{fig:regions}) are empty,
 and (b) all  regions $R_1, R_2, R_5, R_6, R_7$ and $R_{10}$ are nonempty. 
\end{corollary}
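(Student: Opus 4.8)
The plan is to read each of the eleven regions of the Venn diagram (Theorem \ref{Rclassification}, Fig. \ref{fig:regions}) as a Boolean combination of the six classes, using the containments already established, and then reduce each emptiness statement in (a) to a fact about a single class. Part (b) needs no new work: the graphs exhibited in Claim \ref{Rclassification}.1 for $R_1,R_2,R_5,R_6,R_7$ and $R_{10}$ (two double stars, a subdivided double star, $K_{1,2}$, a star $K_{1,n}$, and two copies of $P_5$ joined at their centres) are all \emph{trees} of order at least $3$, so each of those regions already contains a tree. Thus all of the content of the corollary is in part (a).

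For (a) I would first dispose of the two cheap regions. Since trees are connected, $R_{11}$ is empty by Corollary \ref{R11}(a). For $R_4$, the diagram gives $R_4\subseteq \mathcal{R}_{UVR}\cap\mathcal{R}_{UER}$, so it suffices to show no tree of order $\geq 3$ lies in this intersection. Let $\ell$ be a leaf of $T$ with support $s$. Then $T-s\ell$ is the disjoint union of $T-\ell$ and an isolated vertex, so $\gamma_R(T-s\ell)=\gamma_R(T-\ell)+1$; membership in $\mathcal{R}_{UER}$ forces the left side to equal $\gamma_R(T)$, whence $\gamma_R(T-\ell)=\gamma_R(T)-1$ and $\ell\in V^-(T)$. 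But membership in $\mathcal{R}_{UVR}$ means $V^-(T)=\emptyset$, a contradiction.

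Next, $R_8\cup R_9=\mathcal{R}_{CVR}$ (indeed $\mathcal{R}_{CVR}\subseteq\mathcal{R}_{UER}$ by Theorem \ref{cvruer} and $\mathcal{R}_{CVR}\cap\mathcal{R}_{UEA}=\emptyset$, so a $\mathcal{R}_{CVR}$-graph is forced into $R_8$ or $R_9$), so both regions are empty for trees once I show that no tree of order $\geq 3$ lies in $\mathcal{R}_{CVR}$. A star $K_{1,n}$ with $n\geq2$ has the unique $\gamma_R$-function placing $2$ on the centre, so no vertex is ever labelled $1$ and $V^-=\emptyset$. If $T$ is not a star, take a longest path $v_0v_1\cdots v_k$ (so $k\geq3$); then the neighbours of $v_1$ are $v_2$ together with a set $L$ of leaves, and $v_2$ is not a leaf. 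I claim $v_2\notin V^-(T)$: if some $\gamma_R$-function $f$ had $f(v_2)=1$, then the weight $f$ puts on $L\cup\{v_1\}$ is at least $2$, and replacing $f$ by the function with $f(v_1)=2$ and $f\equiv0$ on $L\cup\{v_2\}$ (unchanged elsewhere) gives an RD-function of strictly smaller weight, since $v_2$ drops from $1$ to $0$ while the cost on $L\cup\{v_1\}$ does not increase and all coverage is preserved. By Theorem \ref{minus1} this yields $v_2\notin V^-(T)$, so $T\notin\mathcal{R}_{CVR}$.

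Finally, $R_3=\mathcal{R}_{CEA}\setminus\mathcal{R}_{CVR}$, so—having just excluded $\mathcal{R}_{CVR}$—$R_3$ is empty for trees exactly when no tree of order $\geq3$ lies in $\mathcal{R}_{CEA}$. Stars are again immediate: $K_{1,n}$ has a unique $\gamma_R$-function with $V_1=\emptyset$, so by Lemma \ref{ueaknown} it lies in $\mathcal{R}_{UEA}$, which is disjoint from $\mathcal{R}_{CEA}$. For a non-star tree $T$ I would use Theorem \ref{addedge1}: it suffices to exhibit one non-adjacent pair $x,y$ for which \emph{no} $\gamma_R$-function $f$ satisfies $\{f(x),f(y)\}=\{1,2\}$, for then $\gamma_R(T+xy)=\gamma_R(T)$ and $T\notin\mathcal{R}_{CEA}$. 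The natural candidate is a pair of leaves $\ell_1,\ell_2$ with distinct supports $s_1\neq s_2$ (equivalently, at distance $\geq3$), which exists precisely because $T$ is not a star. The hard part, and the crux of the whole corollary, is the following \emph{leaf-pair lemma}: in any tree, two leaves with distinct supports are never labelled $\{1,2\}$ by a $\gamma_R$-function. I expect to prove this by an exchange argument: assuming $f$ is a $\gamma_R$-function with $f(\ell_1)=2$ and $f(\ell_2)=1$, minimality first forces $f(s_1)=0$ and shows that $\ell_1$ is the only $2$-neighbour of $s_1$, while $f(s_2)\in\{0,1\}$; one then follows the vertex on the $\ell_1$–$\ell_2$ path adjacent to $s_1$ (which is forced to $0$ and hence needs a $2$-neighbour other than $s_1$) to construct a lighter RD-function. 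The delicate point—where a short case analysis on $f(s_2)$ and on whether an internal path vertex is ``stranded'' seems unavoidable—is that labelling a leaf $2$ is only locally, not globally, convertible to labelling its support (for instance, in $P_5$ a leaf may legitimately carry a $2$), so the contradiction must come from the interaction of the two pendant ends together rather than from either end in isolation.
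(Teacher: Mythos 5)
Most of your proposal is sound: part (b) via the tree examples in Claim \ref{Rclassification}.1, the appeal to Corollary \ref{R11} for $R_{11}$, the pendant-edge argument for $R_4$, and your direct proof that no tree of order at least $3$ lies in $\mathcal{R}_{CVR}$ (which disposes of $R_8$ and $R_9$; the paper simply cites \cite{rhv} for this) are all correct. The genuine gap is in your treatment of $R_3$. Your identification $R_3=\mathcal{R}_{CEA}\setminus\mathcal{R}_{CVR}$ is wrong, and it is already inconsistent with your own part (b): the paper states that the trees $U_1,U_2$ are exactly the trees in $\mathcal{R}_{CEA}$ and that they constitute $R_{10}$, and you yourself use $U_2$ (two copies of $P_5$ joined at their centres) to witness that $R_{10}$ is nonempty for trees. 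If $R_3$ were $\mathcal{R}_{CEA}\setminus\mathcal{R}_{CVR}$, then $U_2$ would lie in $R_3\cap R_{10}$, which is impossible for disjoint regions. In fact the paper's own $R_3$-witness (the subdivided-$K_4$ gadget of Claim \ref{Rclassification}.1(iii)) has every $\gamma_R$-function with $V_1=\emptyset$, hence lies in $\mathcal{R}_{UEA}$ and therefore \emph{not} in $\mathcal{R}_{CEA}$; both $R_3$ and $R_4$ sit inside $\mathcal{R}_{UEA}\cap\mathcal{R}_{UER}$.

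Consequently the ``leaf-pair lemma'' you propose as the crux is false, and no argument showing ``no tree of order $\geq 3$ lies in $\mathcal{R}_{CEA}$'' can exist. Concretely, on $U_2$ with paths $a_1a_2a_3a_4a_5$ and $b_1b_2b_3b_4b_5$ joined by $a_3b_3$, one has $\gamma_R(U_2)=8$, and the labelling $(f(a_i))=1,0,2,0,1$ together with $(f(b_i))=2,0,0,2,0$ is a $\gamma_R$-function assigning $\{1,2\}$ to the leaves $a_1,b_1$, which have distinct supports. The repair is cheap and is exactly what the paper does: since $R_3\cup R_4\subseteq\mathcal{R}_{UEA}\cap\mathcal{R}_{UER}$, run your $R_4$ argument once for the whole intersection, replacing ``$T\in\mathcal{R}_{UVR}$ forces $V^-(T)=\emptyset$'' by ``$T\in\mathcal{R}_{UEA}$ forces $V^-(T)=\emptyset$ via Lemma \ref{ueaknown} and Theorem \ref{minus1}''; the pendant edge $s\ell$ then yields $\gamma_R(T)=\gamma_R(T-s\ell)=\gamma_R(T-\ell)+1$, so $\ell\in V^-(T)$, a contradiction killing $R_3$ and $R_4$ simultaneously.
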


\begin{proof}
Let $T$ be a tree. 
By Corollary \ref{R11},  $R_{11}$ is empty. 
Clearly $K_{1,2}$ is in $R_5$ and $K_{1,r}$, $r \geq 2$, is in $R_{6}$. 
Since a tree $T$ is  in $\mathcal{R}_{CVR}$  if and only if $T = K_2$ (see \cite{rhv}), 
$R_8$ and $R_9$ are empty.
  	Assume $T$ is in $\mathcal{R}_{UEA} \cap \mathcal{R}_{UER}$. 
	By Lemma \ref{ueaknown}, $V^-(T)$ is empty. Let $x$ be a leaf of $T$ and 	$\{y\} = N(x)$. 
	As $T$ is in $\mathcal{R}_{UER}$,  $\gamma_R(T) = \gamma_R(T-xy) = \gamma_R(T-x) + 1$, a contradiction. 
	Thus both $R_3$ and $R_4$ are empty.
	
The rest follows immediately by Theorem \ref{Rclassification}.  	
\end{proof}

Thus, we have shown that for trees of order $n \geq 3$, 
the regions of the Venn diagram can be reduced to the six shown in Fig. \ref{fig:trees}.

\begin{figure}[htbp]
	\centering
		\includegraphics{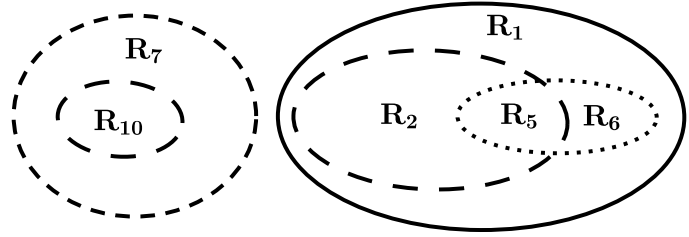}
	\caption{Regions of Venn diagram: trees}
	\label{fig:trees}
\end{figure}

A constructive characterization of the trees belonging to  $\mathcal{R}_{UEA}$ 
is given by Chellali  and  Jafari Rad \cite{cr},   and for the trees belonging to
 $\mathcal{R}_{UVR}$ - by the present author in \cite{samdmaa}.  
By Remark \ref{cearem},  all trees in $\mathcal{R}_{CER}$ are $K_{1,r}$, $r \geq 2$; 
hence $K_{1,2}$ is the unique element of  $R_5$,  
and $R_{6}$ consists of all stars $K_{1,r}$, $r \geq 3$. 

Let $U_i$ be the graph obtained by disjoint  copies  of $P_5$ and $P_{3+i}$ 
by joining the central vertex of $P_5$ with a central  vertex of $P_{3+i}$, $i = 1,2$.
Hansberg et al. \cite{rhv} show that 
$U_1$ and $U_2$ are the only trees which are in $\mathcal{R}_{CEA}$ (i.e. $R_{10}$).  

So, the following problem naturally arises.
\begin{problem}\label{R7}
Find a constructive characterization for trees in  $\mathcal{R}_{UER}$. 
\end{problem}

\begin{problem}\label{i-xii}
Let $\mu$ be a domination-related parameter. 
1) Give a characterization  of every of the twelve classes of graphs  stated in the introduction. 
2) Establish relationships among these twelve classes.
\end{problem}

This problem has been well-studied in the case when $\mu = \gamma$. 
 See the excellent article \cite{hh1} of Haynes and Henning and the references therein.

\section{The class $\mathcal{R}_{CVR}^k$} \label{cvrk}

We will write $\mathcal{R}_{CVR}^k$ instead of $k$-$VR^-_{\gamma_R}$, $k \geq 1$. 
Clearly  $\mathcal{R}_{CVR}^1 \equiv \mathcal{R}_{CVR}$. 
In \cite{r} Jafari Rad  posed the following question:
"What are the properties of graphs belonging to $\mathcal{R}_{CVR}^k$, $k \geq 3$?". 
Here we present some initial results on these  classes.  
We begin with easy observations. 

\begin{observation} \label{V1f} 
Let $G$ be a graph and $f$ a $\gamma_R$-function on  $G$ with 
$\emptyset \not= V_1^f \subsetneq V(G)$.    Then for each $S \subseteq V^f_1$, 
the function $f_S = (V_0^f; V_1^f-S; V_2^f)$ is a $\gamma_R$-function on $G-S$ and $\gamma_R(G-S) = \gamma_R(G)-|S|$. 
\end{observation}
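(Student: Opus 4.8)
The plan is to prove the two assertions — that $f_S$ is an RD-function of the stated weight on $G-S$, and that this weight is minimum — by establishing the two inequalities $\gamma_R(G-S) \leq \gamma_R(G) - |S|$ and $\gamma_R(G-S) \geq \gamma_R(G) - |S|$ separately, and then observing that equality forces $f_S$ to be a $\gamma_R$-function.

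First I would verify the upper bound. Since $S \subseteq V_1^f$, the vertex set of $G-S$ is partitioned as $V_0^f \cup (V_1^f - S) \cup V_2^f$, so $f_S$ is a well-defined $\{0,1,2\}$-labeling of $G-S$. The vertices receiving label $0$ under $f_S$ are exactly those of $V_0^f$. Each such vertex $v$ had, under $f$, a neighbor $w \in V_2^f$; because $S \subseteq V_1^f$ is disjoint from $V_2^f$, the vertex $w$ survives in $G-S$ and the edge $vw$ is retained, so $v$ still has a label-$2$ neighbor. Hence $f_S$ is an RD-function on $G-S$ of weight $|V_1^f - S| + 2|V_2^f| = \gamma_R(G) - |S|$, giving $\gamma_R(G-S) \leq \gamma_R(G) - |S|$.

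Next I would establish the matching lower bound by a lifting argument. Let $g$ be any $\gamma_R$-function on $G-S$ and extend it to a labeling $g'$ of $G$ by setting $g'(u) = 1$ for every $u \in S$ and $g'(x) = g(x)$ otherwise. Assigning label $1$ never violates the RD-condition, and every vertex with $g'$-label $0$ lies in $V(G-S)$ and keeps a label-$2$ neighbor inherited from $g$ (edges of $G-S$ are edges of $G$); thus $g'$ is an RD-function on $G$ of weight $\gamma_R(G-S) + |S|$. Therefore $\gamma_R(G) \leq \gamma_R(G-S) + |S|$, i.e. $\gamma_R(G-S) \geq \gamma_R(G) - |S|$. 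Combining the two inequalities yields $\gamma_R(G-S) = \gamma_R(G) - |S|$, and since $f_S$ attains this value it is a $\gamma_R$-function on $G-S$.

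I do not anticipate a genuine obstacle; the argument is a clean exchange of the two bounds. The one point requiring care — and the only place where the label structure is actually used — is the preservation of the RD-condition under vertex deletion: it is essential that $S$ consists only of label-$1$ vertices, since these are never the label-$2$ witnesses of any label-$0$ vertex, so their removal cannot strand a label-$0$ vertex. The hypotheses $\emptyset \neq V_1^f \subsetneq V(G)$ serve only to guarantee that a nonempty choice of $S$ exists and that $G-S$ remains a nonempty graph; for $S = \emptyset$ the statement is trivial.
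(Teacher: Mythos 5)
Your proof is correct; the paper states this as an ``easy observation'' and omits the proof entirely, and your two-inequality argument (restriction of $f$ for the upper bound, extension of a $\gamma_R$-function on $G-S$ by label $1$ on $S$ for the lower bound) is exactly the standard reasoning the author is implicitly relying on. Nothing to add.
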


\begin{observation} \label{n-1} 
Let $G$ be a graph of order $n$.
Then  $G$  is in $\mathcal{R}_{CVR}^k$ for each $k$ not less than $n-\gamma_R(G)  +1$. 
\end{observation}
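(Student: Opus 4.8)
The plan is to establish the contrapositive-flavored bound directly from the definition of $\mathcal{R}_{CVR}^k$. Recall that $G \in \mathcal{R}_{CVR}^k$ means $\gamma_R(G-S) < \gamma_R(G)$ for \emph{every} vertex set $S$ with $|S| = k$. So I would fix an arbitrary $S \subseteq V(G)$ with $|S| = k$ and $k \geq n - \gamma_R(G) + 1$, and show that deleting $S$ forces the Roman domination number to drop. The key observation is that $|V(G) - S| = n - k \leq \gamma_R(G) - 1$.

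First I would use the trivial upper bound $\gamma_R(H) \leq |V(H)|$ for any graph $H$, which holds because assigning the label $1$ to every vertex yields a valid RD-function (no vertex has label $0$, so the defining condition is vacuously satisfied). Applying this to $H = G - S$ gives
\begin{equation*}
\gamma_R(G-S) \leq |V(G-S)| = n - k.
\end{equation*}
Combining this with the hypothesis $k \geq n - \gamma_R(G) + 1$, which rearranges to $n - k \leq \gamma_R(G) - 1$, I obtain $\gamma_R(G-S) \leq \gamma_R(G) - 1 < \gamma_R(G)$. Since $S$ was an arbitrary $k$-subset of $V(G)$, this shows $G \in \mathcal{R}_{CVR}^k$.

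I do not anticipate a genuine obstacle here, since the statement is essentially immediate once the all-ones labeling bound is in hand; the only point requiring a little care is the arithmetic matching the threshold $k \geq n - \gamma_R(G) + 1$ to the inequality $n - k \leq \gamma_R(G) - 1$, and confirming the bound is strict. One should also note the edge case: if $k = n$ then $G - S$ is the empty graph with $\gamma_R = 0 < \gamma_R(G)$, which is consistent (assuming $\gamma_R(G) > 0$, i.e. $G$ is nonempty). The argument works uniformly for all $k$ in the stated range because the all-ones bound $\gamma_R(G-S) \leq n-k$ only gets smaller as $k$ grows, so the conclusion propagates upward from the threshold value without any additional work.
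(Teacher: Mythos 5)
Your proof is correct: the all-ones labeling gives $\gamma_R(G-S)\leq n-k\leq\gamma_R(G)-1<\gamma_R(G)$ for every $k$-set $S$, which is exactly the (omitted) argument the paper intends for this observation. The edge cases you flag ($k=n$, and $k>n$ being vacuous) are handled consistently, so nothing is missing.
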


There exist $n$-order graphs $G$ which are in $\mathcal{R}_{CVR}^k$ if and only if $k \geq n-\gamma_R(G)  +1$.  
For example all paths are such graphs (see Proposition \ref{path2}).

\begin{observation} \label{-k} 
Let a graph $G$ be in $\mathcal{R}_{CVR}^k$. 
Then $\gamma_R(G-v) \leq \gamma_R(G) + k-2$ for any vertex $v \in V(G)$.
  If $S \subsetneq V(G)$ and $|S|=k$, then 	$1\leq\gamma_R(G) -\gamma_R(G-S) \leq k$.   
\end{observation}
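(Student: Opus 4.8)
The plan is to reduce both inequalities to a single elementary monotonicity fact: for every graph $H$ and every $T \subseteq V(H)$ one has $\gamma_R(H) \le \gamma_R(H-T) + |T|$. To establish this I would take a $\gamma_R$-function $g$ on $H-T$ and extend it to $H$ by assigning the label $1$ to every vertex of $T$ while leaving $g$ unchanged on $V(H)\setminus T$. Any vertex of $V(H)\setminus T$ carrying label $0$ still has, already inside $H-T$, a neighbour labelled $2$, so the extension is again a RD-function; its weight is exactly $\gamma_R(H-T) + |T|$, giving the bound. This ``extend-by-ones'' estimate is essentially the idea underlying Observation \ref{n-1}, and it is the only genuine ingredient needed; everything else is bookkeeping.

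For the second assertion, fix $S \subsetneq V(G)$ with $|S| = k$. The lower bound is immediate from membership in $\mathcal{R}_{CVR}^k$: by definition $\gamma_R(G-S) < \gamma_R(G)$, and since both quantities are integers this forces $\gamma_R(G) - \gamma_R(G-S) \ge 1$. For the upper bound I would apply the monotonicity fact with $H = G$ and $T = S$, obtaining $\gamma_R(G) \le \gamma_R(G-S) + k$, that is, $\gamma_R(G) - \gamma_R(G-S) \le k$.

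For the first assertion, fix a vertex $v$ and choose any $S' \subseteq V(G)\setminus\{v\}$ with $|S'| = k-1$; put $S = S' \cup \{v\}$, so that $|S| = k$ and $G - S = (G-v) - S'$. Membership in $\mathcal{R}_{CVR}^k$ gives $\gamma_R(G-S) \le \gamma_R(G) - 1$. Applying the monotonicity fact to $H = G-v$ and $T = S'$ then yields $\gamma_R(G-v) \le \gamma_R((G-v)-S') + (k-1) = \gamma_R(G-S) + (k-1) \le (\gamma_R(G)-1) + (k-1) = \gamma_R(G) + k - 2$, as required. Note that this argument accommodates the possibility, allowed by Theorem \ref{minus1}, that deleting $v$ \emph{raises} the Roman domination number, since the surplus is controlled entirely through the auxiliary set $S'$.

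The only point requiring a little care—and the place I expect scrutiny—is the existence of the set $S'$: one needs $|V(G)\setminus\{v\}| = n-1 \ge k-1$, i.e.\ $k \le n$, which is precisely the range in which membership in $\mathcal{R}_{CVR}^k$ is not vacuous; likewise the hypothesis $S \subsetneq V(G)$ in the second assertion already forces $k < n$. Since no step is genuinely delicate, I would state the extension estimate once as a standalone remark and then invoke it twice, rather than repeat the RD-function verification in each half of the proof.
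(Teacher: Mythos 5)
Your proof is correct. The paper states this as an \emph{Observation} and supplies no proof at all; your argument --- the ``extend by ones'' monotonicity bound $\gamma_R(H)\le\gamma_R(H-T)+|T|$ combined with the definition of $\mathcal{R}_{CVR}^k$ --- is exactly the routine reasoning the author evidently had in mind, being the same device that underlies the (also unproved) Observation \ref{n-1} and the explicit construction in the proof of Theorem \ref{-2}(i).
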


Let $v$ be the central vertex of a graph $G_k=K_{1,k}$, $k \geq 3$. 
Clearly $G_k$ is in $\mathcal{R}_{CVR}^k$ and  $\gamma_R(G_k-v) = \gamma_R(G_k) + k-2$.

\begin{theorem}\label{-2}
Let a graph $G$ be in $\mathcal{R}_{CVR}^k$ and $x \in V(G)$. 
\begin{itemize}
\item[(i)]    If $deg(x) \geq k-1$ and  $S$ is a set consisting of $x$ and $k-1$ of its neighbors,  
                  then $1 \leq\gamma_R(G) -\gamma_R(G-S) \leq 2$. 
\item[(ii)]    If $deg(x) = k$ then $\gamma_R(G) -\gamma_R(G-N(x)) =1$ and 
                    there is a $\gamma_R$-function $f_x$ on $G$ with 
                   $f_x(x) = 2$ and $f_x(u) = 0$ for all $u \in  N(x)$.  
\end{itemize}
\end{theorem}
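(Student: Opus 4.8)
The plan is to obtain both lower bounds immediately from membership in $\mathcal{R}_{CVR}^k$, and to obtain the upper bounds by a single vertex-restoration argument. In both parts the deleted set $S$ has exactly $k$ elements: in (i), $S$ consists of $x$ together with $k-1$ of its neighbors (possible since $deg(x)\geq k-1$), and in (ii), $S=N(x)$ with $|S|=deg(x)=k$. Hence the defining property of $\mathcal{R}_{CVR}^k$ (equivalently Observation \ref{-k}) gives $1\leq \gamma_R(G)-\gamma_R(G-S)$ in each case, which already settles the lower inequalities.

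For the upper bound in (i), I would start from a $\gamma_R$-function $g$ on $G-S$ and restore the vertices of $S$ as follows: keep $g$ on $V(G)-S$, label $x$ with $2$, and label each of the $k-1$ restored neighbors of $x$ with $0$. Call the result $f$. Every restored neighbor of $x$ then has label $0$ and is adjacent to $x$, which carries label $2$, so it is protected; every vertex of $V(G)-S$ with label $0$ is protected by the same label-$2$ neighbor in $G-S$ that protected it under $g$, since restoring $S$ only adds vertices and edges. Thus $f$ is a RD-function on $G$ of weight $\gamma_R(G-S)+2$, whence $\gamma_R(G)\leq \gamma_R(G-S)+2$, completing (i).

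For (ii) I would apply the same restoration with $S=N(x)$, exploiting that deleting $N(x)$ leaves $x$ isolated in $G-N(x)$. Since an isolated vertex must receive label $1$ in any minimum RD-function, every $\gamma_R$-function $g$ on $G-N(x)$ satisfies $g(x)=1$. Now define $f_x$ by keeping $g$ on $V(G)-N[x]$, setting $f_x(x)=2$, and setting $f_x(u)=0$ for every $u\in N(x)$. The protection check is identical to (i); in particular the label-$2$ neighbor protecting any vertex of $V(G)-N[x]$ lies again in $V(G)-N[x]$, since it cannot be $x$ (whose label under $g$ is $1$). The weight of $f_x$ equals $(\gamma_R(G-N(x))-1)+2=\gamma_R(G-N(x))+1$, so $\gamma_R(G)\leq \gamma_R(G-N(x))+1$; combined with the lower bound this forces $\gamma_R(G)-\gamma_R(G-N(x))=1$. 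Consequently $f_x$ has weight $\gamma_R(G)$, so it is the required $\gamma_R$-function on $G$ with $f_x(x)=2$ and $f_x\equiv 0$ on $N(x)$.

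The only point needing care is the protection bookkeeping when the deleted vertices are restored: I must check that no vertex of $V(G)-S$ relied for domination on a deleted vertex. This is automatic because $g$ is already a RD-function on the smaller graph $G-S$, so its label-$0$ vertices are dominated using only surviving vertices, and passing from $G-S$ to $G$ never destroys an edge. The one genuinely structural observation — and the crux of (ii) — is that $S=N(x)$ isolates $x$, which pins $g(x)=1$ and makes the weight increase exactly $1$ rather than $2$.
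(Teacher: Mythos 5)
Your proof is correct and follows essentially the same route as the paper: in (i) your explicit labeling (label $x$ with $2$ and the restored neighbors with $0$) is exactly the paper's gluing of a $\gamma_R$-function on $G-S$ with a $\gamma_R$-function on $\left\langle S\right\rangle$ of weight $2$, and in (ii) your argument (isolation of $x$ forces $g(x)=1$, then upgrade $x$ to label $2$) is identical to the paper's. The extra care you take with the protection bookkeeping is sound and only makes explicit what the paper leaves implicit.
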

\begin{proof}
(i) Since $G$ is in $\mathcal{R}_{CVR}^k$, the left side inequality is obvious. 
Let $f$ be a $\gamma_R$-function on $G-S$ and $g$ a $\gamma_R$-function on $\left\langle S \right\rangle$. 
Then a RD-function $h$ on $G$ defined as $h(u) = f(u)$ when $u \in V(G)-S$ and $h(u) = g(u)$ when $u \in S$ 
has weight $h(V(G)) = f(V(G-S)) + g(S) = \gamma_R(G-S)  + \gamma_R(\left\langle S \right\rangle) = 
 \gamma_R(G-S) +2$.

(ii) Let $f$ be a $\gamma_R$-function on $G-N(x)$. Clearly $f(x)=1$. 
       Define a RD-function $f_x$ on $G$ as follows: $f_x(x)=2$, $f_x(u) = 0$ when $u \in N(x)$, 
			and $f_x(v) = f(v)$ when $v \in V(G-N(x))$.  
			Since $f_x(V(G)) = f(V(G)-N(x)) +1 = \gamma_R(G-N(x)) + 1 \leq \gamma_R(G)-1+1 = \gamma_R(G)$, 
			$f_x$ is a $\gamma_R$-function  on $G$ and $\gamma_R(G) -\gamma_R(G-N(x)) =1$.
\end{proof}

\begin{corollary}\label{-2-1}
Let a graph $G$ be in $\mathcal{R}_{CVR}^k$ and $|V(G)| > k$. 
Let for every $S \subset V(G)$ with  $|S|=k$ 
is fulfilled $\gamma_R(G) -\gamma_R(G-S) \geq s$. 
\begin{itemize}
\item[(i)] (Jafari Rad \cite{r} when $k=s=2$)  If $s  \geq 2$ then $G$ has no vertex of degree $k$. 
\item[(ii)] If $s  \geq 3$ then  $\Delta(G) \leq k-2$. 
\end{itemize}
\end{corollary}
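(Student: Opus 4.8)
The plan is to obtain both parts as immediate contradictions with Theorem~\ref{-2}, by applying the standing hypothesis $\gamma_R(G)-\gamma_R(G-S)\geq s$ to a judiciously chosen set $S$ of cardinality exactly $k$. The only preliminary observation I would record is that the assumption $|V(G)|>k$ guarantees that the sets $S$ produced below are \emph{proper} subsets of $V(G)$ of size $k$, so that both the hypothesis and Theorem~\ref{-2} are genuinely applicable.

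For part (i), I would argue by contradiction. Suppose $G$ has a vertex $x$ with $\deg(x)=k$, and set $S=N(x)$. Since $x\notin N(x)$ and $|N(x)|=k<|V(G)|$, the set $S$ is a proper subset of $V(G)$ with $|S|=k$, so the standing hypothesis gives $\gamma_R(G)-\gamma_R(G-N(x))\geq s\geq 2$. On the other hand, Theorem~\ref{-2}(ii) applies precisely because $\deg(x)=k$, and it yields $\gamma_R(G)-\gamma_R(G-N(x))=1$. This is the desired contradiction, so no vertex of degree $k$ exists.

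For part (ii), I would again argue by contradiction. Suppose some vertex $x$ satisfies $\deg(x)\geq k-1$, and let $S$ consist of $x$ together with any $k-1$ of its neighbors, so that $|S|=k$ and $S\subsetneq V(G)$. The standing hypothesis then forces $\gamma_R(G)-\gamma_R(G-S)\geq s\geq 3$, while Theorem~\ref{-2}(i), which is applicable since $\deg(x)\geq k-1$, bounds the same quantity above by $2$. The contradiction shows that $\deg(x)\leq k-2$ for every $x\in V(G)$, i.e. $\Delta(G)\leq k-2$.

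Both statements are therefore direct reductions to Theorem~\ref{-2}, and I do not anticipate any genuine obstacle; the only point requiring care is the bookkeeping that ensures each chosen $S$ has cardinality exactly $k$ and is a proper subset of $V(G)$, which is exactly what the hypothesis $|V(G)|>k$ provides. One may note in passing that the two parts use different thresholds on $s$ (namely $s\geq 2$ versus $s\geq 3$) because Theorem~\ref{-2}(ii) pins the difference down to the single value $1$, whereas Theorem~\ref{-2}(i) only bounds it by $2$.
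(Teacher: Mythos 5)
Your proposal is correct and is exactly the argument the paper intends: the corollary is stated immediately after Theorem~\ref{-2} precisely because both parts follow by contradicting Theorem~\ref{-2}(ii) (which pins the difference to $1$ for a degree-$k$ vertex) and Theorem~\ref{-2}(i) (which bounds it by $2$), respectively. Your bookkeeping about $|V(G)|>k$ ensuring $S$ is a proper subset of size exactly $k$ is the only detail worth recording, and you have it right.
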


Now, our aim is to determine which classes $\mathcal{R}_{CVR}^k$ contain 
the paths $P_n$ and the cycles $C_m$, $2 \leq n$, $3 \leq m$.
We need some preparation for it. 
We let $x \equiv_3 y$ means $x \equiv y (mod \ 3)$.

\begin{observation}\label{int}
If $a$ and $b$ are nonnegative integers and $b \geq a$ then 
$\left\lceil  2a/3 \right\rceil + \left\lceil  2(b-a)/3 \right\rceil  \geq \left\lceil  2b/3 \right\rceil$. 
The equality holds if and only if there are integers $p$ and $q$ such that 
 $0 \leq p \leq q \leq 2$, $a \equiv_3 p$ and $b \equiv_3 q$. 
\end{observation}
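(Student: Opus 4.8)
The plan is to separate the two assertions. For the inequality I would simply invoke the superadditivity of the ceiling function: for any reals $u,v$ one has $\lceil u \rceil + \lceil v \rceil \geq \lceil u+v \rceil$, since the left-hand side is an integer that is at least $u+v$. Applying this with $u = 2a/3$ and $v = 2(b-a)/3$, so that $u+v = 2b/3$, immediately yields $\lceil 2a/3 \rceil + \lceil 2(b-a)/3 \rceil \geq \lceil 2b/3 \rceil$. This disposes of the inequality with no case analysis at all.

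For the equality statement I would compute all three ceilings explicitly in terms of residues modulo $3$. Writing any nonnegative integer $x$ as $x = 3t + s$ with $s \in \{0,1,2\}$, a one-line check ($\lceil 2s/3 \rceil = s$ for $s \in \{0,1,2\}$) gives the closed form $\lceil 2x/3 \rceil = 2t + s = x - \lfloor x/3 \rfloor$. I then set $a = 3k_a + p$ and $b = 3k_b + q$ with $p,q \in \{0,1,2\}$, so that $b-a = 3(k_b-k_a) + (q-p)$, and I track how the residue of $b-a$ behaves relative to the sign of $q-p$.

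The crux is the behaviour of $q-p$, and here I would split into two cases. If $p \leq q$, then $0 \leq q-p \leq 2$ is already the residue of $b-a$, and substituting the closed form gives $\lceil 2a/3 \rceil + \lceil 2(b-a)/3 \rceil = (2k_a + p) + (2(k_b-k_a) + (q-p)) = 2k_b + q = \lceil 2b/3 \rceil$, i.e. equality. If instead $p > q$, then $q-p \in \{-1,-2\}$ and a borrow occurs: $b-a = 3(k_b-k_a-1) + (3+q-p)$ with $3+q-p \in \{1,2\}$, so the same substitution produces an extra $+1$, namely $2k_b + q + 1 = \lceil 2b/3 \rceil + 1 > \lceil 2b/3 \rceil$. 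Hence equality holds precisely when $p \leq q$, which is exactly the stated condition that there be integers $p,q$ with $0 \leq p \leq q \leq 2$, $a \equiv_3 p$ and $b \equiv_3 q$.

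The only step that requires genuine care — and the one I expect to be the main obstacle — is the borrow in the case $p > q$: one must correctly rewrite the negative offset $q-p$ as a legitimate residue $3+q-p$ at the cost of decrementing the quotient $k_b-k_a$ by one, since it is precisely this carry that accounts for the loss of equality. Everything else reduces to routine arithmetic with the identity $\lceil 2x/3 \rceil = 2t+s$.
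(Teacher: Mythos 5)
Your proof is correct. Note that the paper states this as an Observation with no proof at all (it is simply declared and then ``used in the sequel without specific reference''), so there is no argument of the author's to compare against; your write-up supplies the missing verification. Both halves check out: the superadditivity $\lceil u\rceil+\lceil v\rceil\geq\lceil u+v\rceil$ disposes of the inequality, and the closed form $\lceil 2(3t+s)/3\rceil=2t+s$ together with the borrow analysis in the case $p>q$ (where $k_b-k_a\geq 1$ is forced by $b\geq a$, so the rewriting $b-a=3(k_b-k_a-1)+(3+q-p)$ is legitimate) correctly isolates $p\leq q$ as the exact equality condition.
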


Observation \ref{int}   will be used in the sequel without specific reference.

\begin{prop} [\cite{cdhh}]\label{o33} 
 $\gamma_{R} (C_n) = \left\lceil 2n/3 \right\rceil$ and $\gamma_{R} (P_m) = \left\lceil 2m/3 \right\rceil$. 
All cycles belonging to the class $\mathcal{R}_{CVR}$ are $C_{3k+1}$ and  $C_{3k+2}$, $k \geq 1$. 
\end{prop}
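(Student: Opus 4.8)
The plan is to establish the two weight formulas first and then read off the cycle characterization from them, since $C_n-v\simeq P_{n-1}$ for every vertex $v$. For the upper bounds I would exhibit explicit periodic RD-functions. Writing $n=3q+r$ with $r\in\{0,1,2\}$, the labeling that repeats the block $(0,2,0)$ along $P_n$ (closing the pattern with a single label $1$ when $r=1$, or with a short $(0,2)$ tail when $r=2$) is a valid RD-function of weight $\lceil 2n/3\rceil$; the analogous cyclic arrangement built from the block $(2,0,0)$, again patched by one vertex of label $1$ (resp. a $(2,0)$ segment) when $3\nmid n$, works for $C_n$. In each case one only has to check that every vertex labelled $0$ is adjacent to a vertex labelled $2$, which is immediate.

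For the matching lower bound I would use the counting estimate valid for any graph $G$ with $\Delta(G)\le 2$. Let $f=(V_0^f;V_1^f;V_2^f)$ be a $\gamma_R$-function. Each vertex of $V_0^f$ has a neighbour in $V_2^f$, and each vertex of $V_2^f$ has at most two neighbours, so $|V_0^f|\le 2|V_2^f|$. Substituting $|V_0^f|=n-|V_1^f|-|V_2^f|$ gives $|V_2^f|\ge (n-|V_1^f|)/3$, whence
\[
\gamma_R(G)=|V_1^f|+2|V_2^f|\ge |V_1^f|+\tfrac{2}{3}\bigl(n-|V_1^f|\bigr)=\tfrac{1}{3}|V_1^f|+\tfrac{2}{3}n\ge \tfrac{2}{3}n .
\]
As the weight is an integer this yields $\gamma_R(G)\ge\lceil 2n/3\rceil$, matching the constructions; the degenerate paths $P_1,P_2$ are checked by hand.

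For the characterization I would exploit the vertex-transitivity of $C_n$: deleting any vertex produces $C_n-v\simeq P_{n-1}$, the same path regardless of $v$. Hence $C_n\in\mathcal{R}_{CVR}$ if and only if the single inequality $\gamma_R(P_{n-1})<\gamma_R(C_n)$, that is $\lceil 2(n-1)/3\rceil<\lceil 2n/3\rceil$, holds. Evaluating both ceilings for $n=3k,\,3k+1,\,3k+2$ gives the value pairs $(2k,2k)$, $(2k,2k+1)$ and $(2k+1,2k+2)$ for $(\gamma_R(P_{n-1}),\gamma_R(C_n))$, so strict inequality occurs exactly when $3\nmid n$. Since a cycle has $n\ge 3$, the qualifying cycles are precisely $C_{3k+1}$ and $C_{3k+2}$ with $k\ge 1$.

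All of this is routine computation; the only points requiring a little care are the bookkeeping of the ceiling function in the final case analysis and the remark that vertex-transitivity collapses the universally quantified condition defining $\mathcal{R}_{CVR}$ into one inequality. Without that remark one would instead have to invoke Theorem \ref{minus1} and construct, for each prescribed vertex, a $\gamma_R$-function assigning it label $1$, which is more laborious; I expect no genuine obstacle beyond this bookkeeping.
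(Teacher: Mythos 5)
The paper does not prove this proposition at all: it is imported verbatim from Cockayne et al.\ \cite{cdhh} (only the first sentence is actually from there; the cycle characterization follows from it together with Theorem \ref{minus1}, as you observe). So there is no in-paper argument to compare against, and your proposal should be judged on its own. It is essentially correct and is the standard argument: the lower bound $\gamma_R(G)\ge \lceil 2n/3\rceil$ for $\Delta(G)\le 2$ via $|V_0^f|\le 2|V_2^f|$ is sound, the path constructions work, and the vertex-transitivity observation legitimately collapses the universally quantified condition defining $\mathcal{R}_{CVR}$ to the single inequality $\gamma_R(P_{n-1})<\gamma_R(C_n)$, whose case analysis you carry out correctly. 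One concrete slip: for cycles with $n=3q+1$ the cyclic pattern $(2,0,0)^q$ patched with a single label $1$ is \emph{not} a Roman dominating function --- the $0$ immediately preceding the patched $1$ has neighbours labelled $0$ and $1$ only. The repair is immediate: use the same $(0,2,0)$ block you used for paths, i.e.\ the labeling $1[020]^{(n-1)/3}$ read cyclically (this is exactly the form the paper later derives in Proposition \ref{cycle}(ii)); the $r=2$ case with a $(2,0)$ tail is fine as written. With that correction the proof is complete.
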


By $[020]^k$ we denote the sequence $0,2,0,..,0,2,0$ where $0,2,0$ is repeated $k$ times, $k \geq 0$;
if $k=0$ then the sequence is empty. 
Let $f$ be an RD-function on $P_n: x_0,x_1,..,x_{n-1}$ and let $0 \leq a \leq b \leq n$. 
Then by $(f(x_i))_{i=a}^b$ we denote the sequence  $f(x_a),f(x_{a+1}),..,f(x_b)$ of values of $f$.

\begin{proposition}\label{path}
For the path $P_n: x_0,x_1,..,x_{n-1}$ the following holds. 
\begin{enumerate}
\item[(i)] If $n \equiv_3 0$ then $\#\gamma_R(P_n) = 1$ and 
                   the only $\gamma_R$-function $f$ on $P_n$ is defined by 
									$(f(x_i))_{i=0}^{n-1} = [020]^{n/3}$.
\item[(ii)] If $n \equiv_3 1$ then $\#\gamma_R(P_n) =  (n+2)/3$ and 
                    any $\gamma_R$-function $f$ on $P_n$ can be defined by 
										$(f(x_i))_{i=0}^{n-1} = [020]^p 1  [020]^t$, 
                   for some $p,t \geq 0$ with $p+t =(n-1)/3$.
\item[(iii)]  If $n \equiv_3 2$ then $\#\gamma_R(P_n) = (n+4)(n+7)/18$ and  for any $\gamma_R$-function $f$ on $P_n$
                     either $(f(x_i))_{i=0}^{n-1} = [020]^s 1 [020]^p 1[020]^t$ for some $s,p,t \geq 0$ with $s+p+t =\left\lfloor n/3 \right\rfloor$ 
										or $(f(x_i))_{i=0}^{n-1} = 20[020]^{\left\lfloor n/3 \right\rfloor}$ or 
									 $(f(x_i))_{i=0}^{n-1} = [020]^k02[020]^l$ for some $k,l \geq 0$ with $k+l =\left\lfloor n/3 \right\rfloor$.
\end{enumerate}
\end{proposition}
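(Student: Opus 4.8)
The plan is to fix an arbitrary $\gamma_R$-function $f=(V_0^f;V_1^f;V_2^f)$ on $P_n$, set $o=|V_1^f|$ and $t=|V_2^f|$, and first pin down these two numbers from two relations. The first is the weight equation $o+2t=\gamma_R(P_n)=\lceil 2n/3\rceil$, available from Proposition \ref{o33}. The second is a covering inequality: each of the $z=n-o-t$ vertices of $V_0^f$ needs a neighbour in $V_2^f$, while on a path every vertex of $V_2^f$ has at most two neighbours, so counting the edges between $V_0^f$ and $V_2^f$ gives $z\le 2t$, that is $n\le o+3t$. Substituting $o=\lceil 2n/3\rceil-2t$ into $n\le o+3t$ together with $0\le o$ forces $(o,t)=(0,n/3)$ when $n\equiv_3 0$, $(o,t)=(1,(n-1)/3)$ when $n\equiv_3 1$, and $(o,t)\in\{(2,\lfloor n/3\rfloor),(0,\lfloor n/3\rfloor+1)\}$ when $n\equiv_3 2$. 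I would then prove (i) outright: when $n\equiv_3 0$ we have $o=0$, $z=2n/3=2t$, so $z\le 2t$ is tight, every $2$ has both neighbours in $V_0^f$ and every $0$ exactly one $2$-neighbour; in particular no $2$ sits at an endpoint, so $x_0\in V_0^f$ forces $x_1\in V_2^f$, and the ``exactly one $2$-neighbour'' condition propagates $0,2,0,0,2,0,\dots$ rigidly, giving the unique function $[020]^{n/3}$.

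Next I would reduce (ii) and the sub-case $o=2$ of (iii) to (i). By Observation \ref{o2} no edge of $P_n$ joins $V_1^f$ and $V_2^f$, so deleting the $o$ vertices labelled $1$ splits $P_n$ into maximal $0/2$-subpaths; a $0$ in such a piece can only be dominated inside it, hence the restriction of $f$ to a piece of length $m_i$ is an RD-function, of weight $w_i\ge\lceil 2m_i/3\rceil$. Since $\sum_i m_i=n-o$ and $\sum_i w_i=\lceil 2n/3\rceil-o$, which a direct computation shows equals $\lceil 2(n-o)/3\rceil$ in both cases, applying Observation \ref{int} repeatedly forces equality throughout: every $m_i\equiv_3 0$ and each piece is a minimum RD-function, so by (i) each piece is exactly $[020]^{m_i/3}$. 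This yields the family $[020]^p1[020]^t$ with $p+t=(n-1)/3$ of (ii), whence $\#\gamma_R(P_n)=(n-1)/3+1=(n+2)/3$, and the family $[020]^s1[020]^p1[020]^t$ with $s+p+t=\lfloor n/3\rfloor$ in the $o=2$ part of (iii), contributing $\binom{\lfloor n/3\rfloor+2}{2}$ functions.

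The remaining case, $n\equiv_3 2$ with $o=0$, is the crux, since there are no $1$'s to organise the picture and $z\le 2t$ now has one unit of slack ($z=2\lfloor n/3\rfloor+1$, $t=\lfloor n/3\rfloor+1$). Here I would argue from the positions $p_1<\dots<p_t$ of $V_2^f$. Because a $0$ whose two neighbours are both $0$ has no $2$-neighbour, every maximal run of $0$'s has length at most $2$; this bounds the number $b$ of $0$'s before $p_1$ and the number $e$ of $0$'s after $p_t$ by $1$ each, and forces $p_{i+1}-p_i\le 3$. Counting zeros gives $b+e+\sum_{i=1}^{t-1}(p_{i+1}-p_i-1)=2\lfloor n/3\rfloor+1$, and comparing with the largest possible value of the left side leaves exactly three profiles: $b=0,e=1$ with all gaps $3$; $b=1,e=0$ with all gaps $3$; or $b=e=1$ with exactly one gap equal to $2$ and the rest $3$. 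Translating back, the first profile is $20[020]^{\lfloor n/3\rfloor}$, while the second and third together are precisely $[020]^{k}02[020]^{l}$ with $k+l=\lfloor n/3\rfloor$ (the split $(k,l)$ recording the location of the length-$2$ run of $0$'s). These give $1+(\lfloor n/3\rfloor+1)$ functions, pairwise distinct and, having no $1$'s, disjoint from the $o=2$ family; summing gives $\binom{\lfloor n/3\rfloor+2}{2}+\lfloor n/3\rfloor+2=(n+4)(n+7)/18$, as claimed.

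The main obstacle is precisely this last case. In the rigid situations the single uniqueness statement (i), together with Observation \ref{int}, localises everything segment-by-segment, so (ii) and the $o=2$ part of (iii) follow almost mechanically. With $o=0$ no such localisation is available: one must run the global run-length and gap bookkeeping, and—more delicately—verify that the two block-families $20[020]^{\cdot}$ and $[020]^{\cdot}02[020]^{\cdot}$ exhaust all admissible profiles and are enumerated without overlap. Establishing that both end-runs have length at most $1$ and translating each gap profile faithfully into the block notation is the step that will require the most care.
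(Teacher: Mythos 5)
Your proof is correct, and while it reaches the same block decomposition $[020]^\bullet$ as the paper, it gets there by a noticeably different route in three places. First, you determine the pair $(|V_1^f|,|V_2^f|)$ up front from the weight equation together with the covering inequality $|V_0^f|\leq 2|V_2^f|$; the paper never isolates this step and instead infers $V_1^f=\emptyset$ or $V_1^f\neq\emptyset$ case by case from parity and from the restriction bound $\gamma_R(P_n-x_r)=\lceil 2r/3\rceil+\lceil 2(n-r-1)/3\rceil$. Second, your proof of (i) is a local propagation argument from the tightness of $|V_0^f|=2|V_2^f|$, whereas the paper reduces to the fact that $\{x_s\mid s\equiv_3 1\}$ is the unique $\gamma$-set of $P_n$. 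Third, and most substantially, in the subcase $n\equiv_3 2$, $V_1^f=\emptyset$ the paper locates a pair $x_r,x_{r+1}$ via private neighbours of the independent dominating set $V_2^f$, deletes it, and applies the restriction bound, while you run a direct run-length/gap count on the positions of the $2$'s; both correctly yield the $\lfloor n/3\rfloor+2$ functions $20[020]^{\lfloor n/3\rfloor}$ and $[020]^k02[020]^l$. The cases with $V_1^f\neq\emptyset$ are handled in essentially the same spirit in both proofs (split at the $1$-labelled vertices, which is legitimate by Observation \ref{o2}, and force each piece to have order $\equiv_3 0$ via Observation \ref{int}), except that you delete all $1$'s simultaneously where the paper peels them off one at a time. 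Your organization is arguably more self-contained and mechanical once $(|V_1^f|,|V_2^f|)$ is fixed; the paper's is shorter where it can lean on Proposition \ref{o33} and the known $\gamma$-sets of paths. The counts agree throughout: $(n+1)(n+4)/18$ functions with two $1$'s plus $(n+4)/3$ with none gives $(n+4)(n+7)/18$.
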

\begin{proof}
First let $n \equiv_3 0$. By Proposition \ref{o33}, $\gamma_R(P_n) = 2n/3$. 
Suppose there is a $\gamma_R$-function $f$ on $P_n$ with $V_1^f \not = \emptyset$, 
 say $f(x_r) = 1$. Then the restriction of $f$ on $P_n-x_r$ is an RD-function and 
$2n/3 -1 \geq \gamma_R(P_n-x_r) = \left\lceil 2r/3\right\rceil + \left\lceil 2(n-r-1)/3\right\rceil  \geq  \left\lceil (2n-2)/3\right\rceil  =2n/3 $, 
a contradiction. Thus for any $\gamma_R$-function $f$ on $P_n$, $V_1^f  = \emptyset$.  
But then $V_2^f$ is a dominating set of $P_n$ of cardinality $n/3 = \gamma(G)$. 
Since $\{x_s \mid s\equiv_3 1\}$ is the unique $\gamma$-set of $P_n$, 
we deduce that  there is an unique $\gamma_R$-function $f$ on $P_n$ and  $(f(x_i))_{i=0}^{n-1} = [020]^{n/3}$.

Second let $n \equiv_3 1$.  By Proposition \ref{o33}, $\gamma_R(P_n) = (2n+1)/3$,
 which immediately implies $V_g^1 \not = \emptyset$ for 
each  $\gamma_R$-function $g$ on $P_n$; say $g(x_r) = 1$.
 Then the restriction $g^\prime$ of $g$ on $P_n-x_r$ is an RD-function and 
$(2n+1)/3 -1 \geq \gamma_R(P_n-x_r) = \left\lceil 2r/3\right\rceil + \left\lceil 2(n-r-1)/3\right\rceil  \geq 
 \left\lceil (2n-2)/3\right\rceil  =(2n-2)/3$. 
Hence $g^\prime$ is a $\gamma_R$-function on $P_n-x_r$ and 
$r\equiv_30$. 
Since each component of $P_n-x_r$ has order $\equiv_30$, $g^\prime$ is an unique $\gamma_R$-function on $P_n-x_r$. 
Thus there are exactly $(n+2)/3$ $\gamma_R$-functions on $P_n$, say $f_0, f_1,..,f_{(n-1)/3}$, 
defined by $(f_r(x_i))_{i=0}^{n-1} = [020]^r 1  [020]^{(n-1)/3-r}$. 

Finally let $n \equiv_3 2$.  By Proposition \ref{o33}, $\gamma_R(P_n) = (2n+2)/3$. 

{\it Case} 1: There is a $\gamma_R$-function $h$ on $P_n$ with $V_1^h \not = \emptyset$, say $h(x_r) = 1$. 
Hence the restriction of $h$ on $P_n-x_r$ is an RD-function and 
$(2n-1)/3 \geq \gamma_R(P_n-x_r) = \left\lceil 2r/3\right\rceil + \left\lceil 2(n-r-1)/3\right\rceil  \geq  \left\lceil (2n-2)/3\right\rceil  =(2n-1)/3$.
 But then  $r \not\equiv_32$ and  the restriction of $h$ on each component of $P_n-x_r$ is a $\gamma_R$-function. 
 Now, (a) if $r \equiv_30$ then $(h(x_i))_{i=0}^{n-1} = [020]^{r/3}1[020]^s1[020]^t$, 
where $r/3 +s+t = \left\lfloor n/3 \right\rfloor$, and 
(b)  if $r \equiv_31$ then $(h(x_i))_{i=0}^{n-1} = [020]^s1[020]^t1[020]^p$, 
where $s+t = \left\lfloor r/3 \right\rfloor$and $s+t+p = \left\lfloor n/3 \right\rfloor$. 
Thus, each $\gamma_R$-function $f$  on $P_n$ with $V_1^f \not= \emptyset$ has 
$(f(x_i))_{i=0}^{n-1} = [020]^a1[020]^b1[020]^c$,  where $0 \leq a,b,c$ and $a+b+c = \left\lfloor n/3 \right\rfloor$.   
Therefore the number of all such functions is $((n-2)/3+1)((n-2)/3+2)/2 = (n+1)(n+4)/18$. 

{\it Case} 2:  There is a $\gamma_R$-function $l$ on $P_n$ with $V_1^l  = \emptyset$. 
But then $V_2^l$ is an independent dominating set and there is at least one pair $x_r, x_{r+1}$ such that 
either $x_r \in V_2^f$ and $pn[x_r,V_2^f] = \{x_r,x_{r+1}\}$ or 
 $x_{r+1} \in V_2^f$ and $pn[x_{r+1},V_2^f ]= \{x_r,x_{r+1}\}$. 
Hence the restriction of $l$ on $P_n-\{x_r,x_{r+1}\}$ is an RD-function and 
$(2n+2)/3 -2 \geq \gamma_R(P_n-\{x_r,x_{r+1}\}) = 
\left\lceil 2r/3\right\rceil + \left\lceil 2(n-r-2)/3\right\rceil  \geq  \left\lceil (2n-4)/3\right\rceil  =(2n-4)/3$. 
But then  $r \equiv_30$ and  the restriction of $l$ on each component of $P_n-\{x_r,x_{r+1}\}$ is a $\gamma_R$-function. 
This implies that 
 $(l(x_i))_{i=0}^{n-1} = [020]^s20[020]^t$ or $(l(x_i))_{i=0}^{n-1} = [020]^s02[020]^t$, 
where $s = \left\lfloor r/3 \right\rfloor$ and $s+t=\left\lfloor n/3 \right\rfloor$. 
Thus for any $\gamma_R$-function $l$ on $P_n$ either $(l(x_i))_{i=0}^{n-1} = 20[020]^{\left\lfloor n/3 \right\rfloor}$ 
or  $(l(x_i))_{i=0}^{n-1} = [020]^s02[020]^t$, where $t+s = \left\lfloor n/3 \right\rfloor$. 
Therefore the number of all such functions is $\left\lfloor n/3 \right\rfloor + 2 = (n+4)/3$. 

Thus the number of all $\gamma_R$-functions on $P_n$ when $n\equiv_32$  is 
$(n+1)(n+4)/18 + (n+4)/3= (n+4)(n+7)/18$.
\end{proof}

\begin{corollary}\label{path1}
For the path $P_n: x_0,x_1,..,x_{n-1}$ the following holds. 
\begin{enumerate}
\item[(i)] If $n \equiv_3 0$ then  $V(P_n) = V_{R}^=(P_n)$.
\item[(ii)] If $n \equiv_3 1$ then $V(P_n) = V_{R}^-(P_n)  \cup V_{R}^=(P_n)$ and 
                    $V_{R}^-(P_n) = \{x_r \mid r \equiv_3 0 \}$. 
\item[(iii)]  If $n \equiv_3 2$ then $V(P_n) = V_{R}^-(P_n)  \cup V_{R}^=(P_n)$ and 
                    $V_{R}^=(P_n) = \{x_r \mid r \equiv_3 2 \}$. 
\end{enumerate}
\end{corollary}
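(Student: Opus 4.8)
The plan is to split $V(P_n)$ into the three sets $V_R^-(P_n)$, $V_R^=(P_n)$ and $V_R^+(P_n)$ and to pin each down from results already in hand. The crucial link is Theorem~\ref{minus1}: a vertex $x_r$ belongs to $V_R^-(P_n)$ exactly when some $\gamma_R$-function labels it $1$, whereas $x_r \in V_R^+(P_n)$ forces the label $2$ on $x_r$ under every $\gamma_R$-function. Because Proposition~\ref{path} lists all $\gamma_R$-functions of $P_n$ explicitly, extracting $V_R^-(P_n)$ reduces to reading off the positions at which a $1$ may occur in the $[020]$-patterns.

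Second, I would eliminate $V_R^+(P_n)$ once, uniformly in the residue of $n$ modulo $3$. Deleting $x_r$ leaves $P_r \cup P_{n-1-r}$ (one factor possibly empty), so Proposition~\ref{o33} gives $\gamma_R(P_n - x_r) = \lceil 2r/3\rceil + \lceil 2(n-1-r)/3\rceil$. Bounding each summand by $\lceil 2a/3\rceil \le (2a+2)/3$ yields $\gamma_R(P_n - x_r) \le (2n+2)/3$, and since this count is an integer, $\gamma_R(P_n - x_r) \le \lfloor (2n+2)/3\rfloor = \lceil 2n/3\rceil = \gamma_R(P_n)$ in all three congruence classes. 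Hence $V_R^+(P_n) = \emptyset$ and $V(P_n) = V_R^-(P_n) \cup V_R^=(P_n)$ throughout, which already disposes of the union statements in (i)--(iii).

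Third comes the determination of $V_R^-(P_n)$ from Proposition~\ref{path}. When $n \equiv_3 0$ the single $\gamma_R$-function is $[020]^{n/3}$ with $V_1^f = \emptyset$, so no vertex can carry a $1$; thus $V_R^-(P_n) = \emptyset$ and $V(P_n) = V_R^=(P_n)$. When $n \equiv_3 1$ every $\gamma_R$-function has the shape $[020]^p 1 [020]^t$, whose unique $1$ occupies position $3p$; letting $p$ range over $0, \ldots, (n-1)/3$ realises precisely the positions $x_r$ with $r \equiv_3 0$, so $V_R^-(P_n) = \{x_r \mid r \equiv_3 0\}$. When $n \equiv_3 2$ the only $\gamma_R$-functions containing a $1$ are the $[020]^s 1 [020]^p 1 [020]^t$, in which the two $1$'s sit at positions $3s \equiv_3 0$ and $3s + 3p + 1 \equiv_3 1$; varying $s,p,t$ attains every position in residue classes $0$ and $1$, while no $\gamma_R$-function ever labels a position $\equiv_3 2$ with a $1$. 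Therefore $V_R^-(P_n) = \{x_r \mid r \not\equiv_3 2\}$, and combined with $V_R^+(P_n) = \emptyset$ the complement $\{x_r \mid r \equiv_3 2\}$ is exactly $V_R^=(P_n)$.

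The one point needing care is the surjectivity bookkeeping in the case $n \equiv_3 2$: I must verify not only that each $1$ falls in residue class $0$ or $1$, but that every such position is genuinely attained for some admissible block lengths $s,p,t$ with $s+p+t = \lfloor n/3\rfloor$ (the extreme positions $0$ and $n-1$ included), and that the remaining $\gamma_R$-functions $20[020]^{\lfloor n/3\rfloor}$ and $[020]^k 02 [020]^l$ introduce no additional $1$'s. This is routine but is the step where the argument has to be executed attentively.
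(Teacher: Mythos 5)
Your proof is correct and is essentially the argument the paper intends: the corollary is stated as an immediate consequence of Proposition~\ref{path}, read off via Theorem~\ref{minus1} (a vertex lies in $V_R^-$ iff some $\gamma_R$-function assigns it label $1$), which is exactly what you do. Your explicit verification that $V_R^+(P_n)=\emptyset$ via $\gamma_R(P_n-x_r)=\lceil 2r/3\rceil+\lceil 2(n-1-r)/3\rceil\le\lceil 2n/3\rceil$ is a detail the paper leaves implicit but is needed for the union statements, and you handle it correctly.
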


\begin{proposition}\label{path2}
 $P_n \in \mathcal{R}^s_{CVR}$ if and only if $\left\lfloor n/3 \right\rfloor + 1 \leq s$. 
\end{proposition}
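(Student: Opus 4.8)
The plan is to establish the two implications separately. The starting point is Proposition \ref{o33}, which gives $\gamma_R(P_n) = \lceil 2n/3 \rceil$, together with the elementary identity $\lceil 2n/3 \rceil = n - \lfloor n/3 \rfloor$; the latter yields $n - \gamma_R(P_n) + 1 = \lfloor n/3 \rfloor + 1$, which is exactly the threshold in the statement. This identification is the conceptual heart of the argument, since it lets me read off one direction from a general bound.

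For the direction $s \ge \lfloor n/3 \rfloor + 1 \Rightarrow P_n \in \mathcal{R}^s_{CVR}$ I would invoke Observation \ref{n-1} directly: every such $s$ satisfies $s \ge n - \gamma_R(P_n) + 1$, so membership is immediate with no further computation.

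For the converse I argue contrapositively: for each $s$ with $1 \le s \le \lfloor n/3 \rfloor$ I must produce a single $s$-element set $S$ with $\gamma_R(P_n - S) \ge \gamma_R(P_n)$, which shows $P_n \notin \mathcal{R}^s_{CVR}$ (the class is a ``for all $S$'' condition, so one witness suffices). I would take $S = \{x_{3j+2} : 0 \le j \le s-1\}$, which is a valid choice because $s \le \lfloor n/3 \rfloor$ forces the largest index $3s-1$ to be at most $n-1$. Deleting $S$ splits $P_n$ into the $s$ pairs $\{x_{3j}, x_{3j+1}\}$, each a copy of $P_2$, and the tail $P_{n-3s}$ (empty when $n = 3s$). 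Using that $\gamma_R$ is additive over connected components and $\gamma_R(P_2) = 2$, I would compute $\gamma_R(P_n - S) = 2s + \lceil 2(n-3s)/3 \rceil$; since $2s$ is an integer it slides out of the ceiling to give $\lceil 2n/3 \rceil = \gamma_R(P_n)$, so the removal leaves the Roman domination number unchanged.

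The routine ingredients — the ceiling identity, the value $\gamma_R(P_2) = 2$, and additivity over components — need only a line each. The one creative step is the choice of witness set, whose point is that the first $3s$ vertices carry Roman weight $2s$ both as the segment $P_{3s} \subseteq P_n$ and as $s$ separated copies of $P_2$, so the $s$ deletions repartition those vertices without lowering the total. I expect the only real care to be presentational: stating the indexing of $S$ and the resulting components cleanly, and noting the boundary case $n = 3s$ where the tail degenerates to the empty graph.
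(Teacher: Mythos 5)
Your proof is correct, and the forward direction is identical to the paper's (Observation \ref{n-1} plus the identity $n-\gamma_R(P_n)+1=\lfloor n/3\rfloor+1$). For the converse the core idea is also the same -- delete vertices with index $\equiv 2 \pmod 3$ so that the path breaks into copies of $P_2$ carrying the same total Roman weight -- but you handle the intermediate values of $s$ differently. The paper removes the full set $S=\{x_r \mid r\equiv_3 2\}$ of size $\lfloor n/3\rfloor$, verifies $\gamma_R(P_n-S)=\gamma_R(P_n)$, and then appeals to Corollary \ref{path1} for the monotonicity claim $\gamma_R(P_n-S_1)\geq\gamma_R(P_n-S)$ for every $S_1\subseteq S$, which is the slightly delicate step since vertex removal does not in general behave monotonically for $\gamma_R$. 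You instead take, for each $s\leq\lfloor n/3\rfloor$, the explicit prefix $S=\{x_{3j+2}: 0\leq j\leq s-1\}$ and compute $\gamma_R(P_n-S)=2s+\lceil 2(n-3s)/3\rceil=\lceil 2n/3\rceil$ directly from additivity over components; this is self-contained, avoids the reliance on Corollary \ref{path1}, and makes the boundary case $n=3s$ explicit. Both arguments are sound; yours is the more elementary of the two at that one step.
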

\begin{proof}
By Observation \ref{n-1}, $P_n \in \mathcal{R}^s_{CVR}$ for all $s \geq n - \gamma_R(P_n) +1$. 
Since $\gamma_R(P_n) = \left\lceil 2n/3\right\rceil$, $n - \gamma_R(P_n) +1 = \left\lfloor n/3\right\rfloor +1$.  
Let $P_n: x_0, x_1,..,x_{n-1}$ and $S = \{x_r \mid r \equiv_32\}$. 
Clearly $|S|  = \left\lfloor n/3\right\rfloor$ and (a) $P_n-S = (|S|+1)K_2$ when $n \equiv_32$, 
(b)   $P_n-S = |S|K_2 \cup K_1$ when $n \equiv_31$,  and 
(c)  $P_n-S = |S|K_2$ when $n \equiv_30$.  
In all cases,  $\gamma_R(P_n-S) = \left\lceil 2n/3\right\rceil =  \gamma_R(P_n)$. 
 By Corollary \ref{path1}, $\gamma_R(P_n-S_1)   \geq \gamma_R(P_n-S)$ for any $S_1 \subseteq S$. 
Thus, $P_n \not\in \mathcal{R}^s_{CVR}$ for all $s \leq \left\lfloor n/3\right\rfloor$. 
\end{proof}

Let  $f$ be a RD-function on  $C_n: x_0,x_2,..,x_{n-1},x_0$ and $0 \leq a \leq b$. 
 Then by $(f(x_i))_{i=a}^b$ we denote the sequence $f(x_a),f(x_{a+1}),..,f(x_b)$ of values of $f$, where
all the subscripts are taken modulo $n$.  

\begin{proposition}\label{cycle}
For the cycle $C_n$ the following holds. 
\begin{enumerate}
\item[(i)] If $n \equiv_3 0$ then $\#\gamma_R(C_n) = 3$ and  $f_0, f_1$ and $f_2$ 
                  defined by $(f_j(x_i))_{i=j}^{n-1+j} = [020]^{n/3}$, 
                  $j =0,1,2$ are all $\gamma_R$-functions on $C_n$.
\item[(ii)] If $n \equiv_3 1$ then $\#\gamma_R(C_n) =n$ and  any 
             $\gamma_R$-function $g$ on $C_n$ can be defined by 
             $(g(x_s))_{s=r}^{n+r-1} = 1[020]^{(n-1)/3}$, where $r \in \{0,1,..,n-1\}$.
\item[(iii)]  If $n \equiv_3 2$ then  $\#\gamma_R(C_n)=n(n+7)/6$ and  for any $\gamma_R$-function $f$ on $C_n$
                      either $(f(x_i))_{i=k}^{n-1+k} = [020]^ {\left\lfloor n/3 \right\rfloor}02$, $k \in \{ 0,1,..,n-1\}$  
											or $(f(x_i))_{i=s}^{n-1+s} = 1[020]^a1[020]^b$,  where $0 \leq s \leq s+3a \leq n-2$ and $a+b = \left\lfloor n/3 \right\rfloor$. 
\end{enumerate}
\end{proposition}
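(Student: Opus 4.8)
The plan is to reduce every case to paths by a single vertex deletion and then invoke Proposition~\ref{path}. Throughout I use $\gamma_R(C_n)=\lceil 2n/3\rceil$ (Proposition~\ref{o33}) and set $k=\lfloor n/3\rfloor$. The basic device is the following: if $f$ is a $\gamma_R$-function on $C_n$ with $f(x_r)=1$, then $C_n-x_r\simeq P_{n-1}$ and $f|(C_n-x_r)$ is an RD-function on $P_{n-1}$, so $\gamma_R(P_{n-1})\le\gamma_R(C_n)-1$. Comparing this with the exact value $\gamma_R(P_{n-1})=\lceil 2(n-1)/3\rceil$ tells me whether a label $1$ can occur at all and, when it does, forces $f|(C_n-x_r)$ to be a $\gamma_R$-function on $P_{n-1}$, whose shape is then dictated by Proposition~\ref{path}. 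Whether $V_1^f$ is empty in the first place I decide either by this weight comparison or by a parity argument applied to $\gamma_R(C_n)=|V_1^f|+2|V_2^f|$.

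For (i), $n\equiv_30$: here $\gamma_R(P_{n-1})=2n/3=\gamma_R(C_n)$, so a vertex labelled $1$ would make $f|(C_n-x_r)$ an RD-function on $P_{n-1}$ of weight $2n/3-1<\gamma_R(P_{n-1})$, which is impossible. Hence $V_1^f=\emptyset$ for every $\gamma_R$-function, $V_2^f$ is a dominating set of size $n/3=\gamma(C_n)$, i.e.\ a minimum dominating set, and the only minimum dominating sets of $C_n$ are the three translates $\{x_j,x_{j+3},\dots\}$, $j=0,1,2$; this yields the three claimed functions. For (ii), $n\equiv_31$: the weight $\gamma_R(C_n)=(2n+1)/3=2k+1$ is odd, so $V_1^f\ne\emptyset$; fixing $f(x_r)=1$, the weight comparison is an equality, so $f|(C_n-x_r)$ is a $\gamma_R$-function of $P_{n-1}$ with $n-1\equiv_30$, which by Proposition~\ref{path}(i) is the unique pattern $[020]^{(n-1)/3}$. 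Thus $f$ is completely determined by the position $r$ of its single label $1$, giving exactly $n$ pairwise distinct functions of the stated form.

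The real work is (iii), $n\equiv_32$, where $\gamma_R(C_n)=(2n+2)/3=2k+2$ and both possibilities for $V_1^f$ occur. If $V_1^f\ne\emptyset$, deleting a $1$-vertex yields a $\gamma_R$-function of $P_{n-1}$ (now $n-1\equiv_31$), which by Proposition~\ref{path}(ii) has shape $[020]^a1[020]^b$; reinserting $x_r$ shows that $f$ has exactly two labels $1$ and that the two cyclic arcs between them are filled by $[020]$-blocks, so $f$ is determined by its unordered pair of $1$-positions. These are exactly the pairs at cyclic distance $\equiv1\pmod 3$, and counting them — equivalently, summing the parametrization $0\le s\le s+3a\le n-2$, $a+b=k$ — gives $n(k+1)/2$ such functions. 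If instead $V_1^f=\emptyset$, then $V_2^f$ is a minimum dominating set of $C_n$ of size $k+1$; its $k+1$ cyclic gaps sum to $n=3(k+1)-1$ with each gap $\le 3$, forcing exactly one gap $2$ and $k$ gaps $3$, so all these sets form a single rotation orbit of size $n$, namely the functions $[020]^k02$.

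Adding the two subcases of (iii) gives $\#\gamma_R(C_n)=n+n(k+1)/2=n(n+7)/6$, as claimed. The main obstacle is precisely the bookkeeping in this last case: first establishing that a nonempty $V_1^f$ forces exactly two labels $1$ with both joining arcs in $[020]$-form (this needs the full path classification of Proposition~\ref{path}(ii), read from each of the two $1$'s), and then enumerating the two-label-$1$ functions without double counting — which is the reason the parametrization is stated with the asymmetric bound $s+3a\le n-2$ — together with pinning down all minimum dominating sets of $C_n$ in the $V_1^f=\emptyset$ subcase via the gap analysis above.
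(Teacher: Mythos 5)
Your proposal is correct and follows essentially the same route as the paper: delete a $1$-labelled vertex to reduce to $P_{n-1}$ and invoke Proposition \ref{path}, use the weight/parity comparison to decide when $V_1^f$ can be nonempty, and treat the $V_1^f=\emptyset$ case via minimum dominating sets of $C_n$ (your gap analysis is just a more explicit form of the paper's observation that such a set is fixed by the unique vertex adjacent to two of its elements). The only difference is cosmetic bookkeeping in the count $n(n+1)/6$ for case (iii), which you obtain by counting unordered pairs of $1$-positions rather than the paper's telescoping sum.
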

\begin{proof}
Assume first $n\equiv_30$. Then $\gamma_R(C_n) = 2n/3$ because of Proposition \ref{o33}. 
If $f(x_r) =1$ for some $\gamma_R$-function $f$ on $C_n$ 
then the restriction of $f$ on $C_n-x_r$ is an RD-function with 
weight $2n/3-1$. But $C_n-x_r$ is a path on $n-1$ vertices
 which implies $\gamma_R(C_n-x_r) = 2n/3$, a contradiction. 
Thus, for each $\gamma_R$-function $f$ on $C_n$, $V_1^f = \emptyset$. 
Then $V_2^f$ is a dominating set of $C_n$ and $|V_2^f| = n/3 = \gamma(G)$. 
Hence $V_2^f$ is a $\gamma$-set of $C_n$. But all $\gamma$-sets of $C_n$
 are $D_i = \{x_r \mid r\equiv_3i\}$, $i=0,1,2$. 
Thus either $(f(x_s))_{s=0}^{n-1} = [020]^{n/3}$ or 
$(f(x_s))_{s=1}^{n} = [020]^{n/3}$ or $(f(x_s))_{s=2}^{n+1} = [020]^{n/3}$. 

Second let $n\equiv_31$.  If $g$ is an arbitrary $\gamma_R$-function on $C_n$ then since 
$\gamma_R(C_n) = (2n+1)/3$, it follows that there is $x_r \in V_1^g$. 
By Proposition \ref{path}, the restriction of $g$ on $C_n-x_r$ is the unique $\gamma_R$-function on $C_n-x_r$ 
and  $(g(x_s))_{s=r+1}^{n+r-1} = [020]^{(n-1)/3}$. 
Thus for any $\gamma_R$-function $g$ on $C_n$, $(g(x_s))_{s=r}^{n+r-1} = 1[020]^{(n-1)/3}$, where $r \in \{0,1,..,n-1\}$. 
As a consequence, $\#\gamma_R(P_n) =n$.

Finally let $n\equiv_32$. Then $\gamma_R(C_n) =  (2n+2)/3$. 

{\it Case} 1: There is a $\gamma_R$-function $h$ on $C_n$ with $V_1^h \not = \emptyset$, say $h(x_r) = 1$. 
Then $C_n-x_r$ is a path on $n-1$ vertices and $\gamma_R(C_n-x_r) = (2n-1)/3 = \gamma_R(C_n) -1$. 
Hence the restriction of $h$ on $C_n-x_r$ is a $\gamma_R$-function. Now using Proposition \ref{path}, 
we obtain that  $(h(x_i))_{i=r}^{n+r-1} = 1[020]^{m}1[020]^t$, where $0 \leq m,t$ and $m+t = (n-2)/3$,  
or equivalently (to avoid repetitions), 
$(h(x_i))_{i=s}^{n-1+s} = 1[020]^a1[020]^b$,  where $0 \leq s \leq s+3a \leq n-2$ and $a+b = \left\lfloor n/3 \right\rfloor$. 
The number of all such functions is 
$\frac{n+1}{3} + 3(\frac{n+1}{3} -1) + 3(\frac{n+1}{3} - 2) + .. + 3.1 = \frac{n(n+1)}{6}$. 

{\it Case} 2: There is a $\gamma_R$-function $l$ on $C_n$ with $V_1^l = \emptyset$. 
Since $\gamma_R(C_n) = 2\gamma(C_n)$, $V_2^l$ is a $\gamma$-set of $C_n$. 
 But each $\gamma$-set $D$ of $C_n$ is independent and it is fixed by the placement of the only
vertex which is adjacent to two distinct elements of $D$. 
Thus, there are $n$ such functions, say $l_0, l_1,..,l_{n-1}$, where 
  $(l_k(x_i))_{i=k}^{n-1+k} = [020]^ {\left\lfloor n/3 \right\rfloor}02$, $k = 0,1,..,n-1$. 
	
	Thus the number of all $\gamma_R$-functions on $C_n$ when $n\equiv_32$  is 
	$ \frac{n(n+1)}{6} + n =  \frac{n(n+7)}{6}$.
\end{proof}

\begin{corollary}\label{cycle2}
If $n \not \equiv_3 0$ then  $V(C_n) = V_{R}^-(C_n)$.
If $n \equiv_3 0$ then  $V(C_n) = V_{R}^=(C_n)$. 
\end{corollary}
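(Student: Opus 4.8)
The plan is to reduce the entire statement to a single arithmetic comparison by exploiting the vertex-transitivity of $C_n$. First I would record that deleting any vertex from a cycle leaves a path, i.e. $C_n - v \simeq P_{n-1}$ for every $v \in V(C_n)$. Consequently, by Proposition \ref{o33}, $\gamma_R(C_n - v) = \gamma_R(P_{n-1}) = \lceil 2(n-1)/3 \rceil$ for every $v$, a value that does not depend on the choice of $v$. Since $\gamma_R(C_n) = \lceil 2n/3 \rceil$, all vertices of $C_n$ automatically lie in the same one of the three classes $V_{R}^-(C_n)$, $V_{R}^=(C_n)$, $V_{R}^+(C_n)$, and it remains only to decide which one by comparing $\lceil 2n/3 \rceil$ with $\lceil 2(n-1)/3 \rceil$.

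I would then split according to the residue of $n$ modulo $3$. If $n \equiv_3 0$, both numbers equal $2n/3$, so $\gamma_R(C_n - v) = \gamma_R(C_n)$ and hence $V(C_n) = V_{R}^=(C_n)$. If $n \equiv_3 1$, then $\gamma_R(C_n) = (2n+1)/3$ whereas $\gamma_R(P_{n-1}) = (2n-2)/3$; and if $n \equiv_3 2$, then $\gamma_R(C_n) = (2n+2)/3$ whereas $\gamma_R(P_{n-1}) = (2n-1)/3$. In both of the latter cases the deleted-vertex value is exactly $\gamma_R(C_n) - 1$, so $\gamma_R(C_n - v) < \gamma_R(C_n)$ for every $v$, giving $V(C_n) = V_{R}^-(C_n)$.

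A more structural alternative, should one prefer to invoke Proposition \ref{cycle} directly, is to combine it with Theorem \ref{minus1}. For $n \not\equiv_3 0$ the explicit descriptions of the $\gamma_R$-functions (the cyclic forms $1[020]^{(n-1)/3}$ and $1[020]^a1[020]^b$) can be cyclically positioned so that any prescribed vertex receives the label $1$; hence every vertex lies in $V_1^f$ for some $\gamma_R$-function $f$, and Theorem \ref{minus1} places it in $V_{R}^-(C_n)$. For $n \equiv_3 0$, every $\gamma_R$-function has $V_1^f = \emptyset$ and no vertex is labelled $2$ by all three functions $f_0, f_1, f_2$, so by Theorem \ref{minus1} no vertex belongs to $V_{R}^-(C_n) \cup V_{R}^+(C_n)$, forcing $V(C_n) = V_{R}^=(C_n)$. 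Either way there is no genuine obstacle: once the identification $C_n - v \simeq P_{n-1}$ is recorded, the proof is a mechanical case check, the only mild care being the bookkeeping of the ceiling brackets for $2n$ and $2(n-1)$ in each residue class.
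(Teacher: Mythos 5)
Your proposal is correct, and your primary argument takes a genuinely different (and more elementary) route than the one the paper intends. The paper states this as an immediate corollary of Proposition \ref{cycle}: from the explicit enumeration of all $\gamma_R$-functions on $C_n$ one reads off, via Theorem \ref{minus1}, that for $n\not\equiv_3 0$ every vertex receives label $1$ under some $\gamma_R$-function (hence lies in $V_R^-$), while for $n\equiv_3 0$ every $\gamma_R$-function has $V_1^f=\emptyset$ and the three sets $V_2^{f_j}=D_j$ are disjoint, so no vertex lies in $V_R^-$ or $V_R^+$ --- exactly your ``structural alternative.'' Your main argument instead observes that $C_n-v\simeq P_{n-1}$ for every $v$, so by Proposition \ref{o33} the quantity $\gamma_R(C_n-v)=\lceil 2(n-1)/3\rceil$ is independent of $v$, and the whole corollary reduces to comparing $\lceil 2n/3\rceil$ with $\lceil 2(n-1)/3\rceil$ in each residue class; your ceiling bookkeeping is correct (equality for $n\equiv_3 0$, a drop of exactly $1$ otherwise). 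What the arithmetic route buys is independence from the full enumeration in Proposition \ref{cycle} --- only the closed-form values of $\gamma_R$ for paths and cycles are needed --- at the cost of relying on the special fact that all single-vertex deletions from $C_n$ give isomorphic graphs, which is why it does not generalize beyond vertex-transitive situations; the paper's route, by contrast, exercises Theorem \ref{minus1} and the function-level description that is reused later (e.g.\ in Example \ref{Cn} and Proposition \ref{cycle3}). Note also that the $n\not\equiv_3 0$ half is already asserted in Proposition \ref{o33} (the statement that the cycles in $\mathcal{R}_{CVR}$ are exactly $C_{3k+1}$ and $C_{3k+2}$), so either of your derivations can be viewed as supplying the proof of that claim as well.
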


\begin{proposition}\label{cycle3}
If $n \not\equiv_3 0$ then  $C_n  \in \mathcal{R}^s_{CVR}$ for all $s \geq 1$. 
If $n \equiv_3 0$ then  $C_n  \in \mathcal{R}^s_{CVR}$   if and only if 
$n/3 + 1 \leq s$. 
\end{proposition}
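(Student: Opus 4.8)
The plan is to compute $\gamma_R(C_n-S)$ for an arbitrary $S\subseteq V(C_n)$ with $|S|=s\ge 1$ and to compare it with $\gamma_R(C_n)=\lceil 2n/3\rceil$ from Proposition \ref{o33}. Deleting a nonempty proper set $S$ from a cycle leaves a disjoint union of paths; if these components have orders $m_1,\dots,m_t$, then each $m_j\ge 1$, $\sum_{j=1}^t m_j=n-s$, and the number $t$ of components is at most $s$: around the cycle the kept-vertex runs and the removed-vertex runs alternate, so there are $t$ of each, and the $s$ removed vertices split into $t\ge 1$ runs. Since $\gamma_R$ is additive over connected components and $\gamma_R(P_m)=\lceil 2m/3\rceil$, this gives $\gamma_R(C_n-S)=\sum_{j=1}^t\lceil 2m_j/3\rceil$ (the case $s=n$ being trivial, as then the remaining graph is empty).

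The key step is to write $\lceil 2m_j/3\rceil=(2m_j+e_j)/3$, where $e_j\in\{0,1,2\}$ records $m_j\bmod 3$, namely $e_j=0,1,2$ according as $m_j\equiv_3 0,1,2$. Summing yields
\[ \gamma_R(C_n-S)=\frac{2(n-s)+E}{3},\qquad E:=\sum_{j=1}^t e_j\le 2t\le 2s. \]
For $n\not\equiv_3 0$ this settles the first claim at once: from $E\le 2s$ we get $\gamma_R(C_n-S)\le 2n/3$, and since $2n/3$ is not an integer while $\gamma_R(C_n-S)$ is, in fact $\gamma_R(C_n-S)\le\lfloor 2n/3\rfloor<\lceil 2n/3\rceil=\gamma_R(C_n)$. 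Hence every removal of $s\ge 1$ vertices strictly decreases $\gamma_R$, i.e.\ $C_n\in\mathcal{R}^s_{CVR}$ for all $s\ge 1$.

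For $n\equiv_3 0$ we have $\gamma_R(C_n)=2n/3$, so $\gamma_R(C_n-S)\ge\gamma_R(C_n)$ forces $E\ge 2s$, hence $E=2s$; by $E\le 2t\le 2s$ this occurs exactly when $t=s$ and every $m_j\equiv_3 2$. Geometrically, the $s$ removed vertices are then pairwise nonadjacent and each of the $s$ resulting gaps has order $\equiv_3 2$, in which case $\gamma_R(C_n-S)=2n/3=\gamma_R(C_n)$ and so $C_n\notin\mathcal{R}^s_{CVR}$. It remains to decide for which $s$ such an $S$ exists. Since each gap then has order at least $2$ and there are $s$ of them summing to $n-s$, existence requires $n-s\ge 2s$, i.e.\ $s\le n/3$; conversely, when $s\le n/3$ one realizes it explicitly by taking $s-1$ gaps of order $2$ and one gap of order $n-3s+2$ (which is $\ge 2$ and $\equiv_3 2$ because $n\equiv_3 0$), separated by single removed vertices. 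Thus a non-decreasing $S$ exists precisely when $s\le n/3$, and therefore $C_n\in\mathcal{R}^s_{CVR}$ if and only if $s\ge n/3+1$.

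I expect the main obstacle to be the bookkeeping in the case $n\equiv_3 0$: correctly showing that the equality $E=2s$ forces the rigid configuration (all gaps $\equiv_3 2$, all removed vertices isolated), and then verifying that this configuration is actually realizable on the cycle for every $s\le n/3$ via the explicit gap pattern above. The inequality direction, that no choice of $S$ beats the stated bound, is routine once the path-component decomposition and the identity for $\lceil 2m/3\rceil$ are in place.
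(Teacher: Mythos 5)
Your proof is correct, but it takes a genuinely different route from the paper's. The paper handles $C_n$ by reducing to the already-established path results: since $C_n-v\simeq P_{n-1}$ for every $v$, it compares $\gamma_R(P_{n-1})$ with $\gamma_R(C_n)$ via Proposition~\ref{o33} and then quotes Corollary~\ref{cycle2}, Corollary~\ref{path1} and Proposition~\ref{path2} --- for $n\not\equiv_3 0$ the strict inequality $\gamma_R(P_{n-1})<\gamma_R(C_n)$, together with the fact that further deletions from a union of paths never increase $\gamma_R$, settles the first claim, while for $n\equiv_3 0$ the equality $\gamma_R(P_{n-1})=\gamma_R(C_n)$ transfers the path threshold $\lfloor (n-1)/3\rfloor+1$ to the cycle threshold $n/3+1$. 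You instead analyze $C_n-S$ directly: decompose it into $t\le s$ path components, use $\gamma_R(P_m)=\lceil 2m/3\rceil$ and the exact identity $\lceil 2m/3\rceil=(2m+(m\bmod 3))/3$ to obtain $\gamma_R(C_n-S)=(2(n-s)+E)/3$ with $E\le 2t\le 2s$, and then characterize when equality is attainable. Your version is self-contained (it needs neither Proposition~\ref{path2} nor Corollary~\ref{cycle2}) and yields strictly more information, namely the exact description of the non-decreasing sets $S$ (pairwise nonadjacent deletions with every gap of order $\equiv_3 2$), at the price of the modular bookkeeping; the paper's reduction is shorter given the machinery it has already built. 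All the steps check out, including the realizability construction with $s-1$ gaps of order $2$ and one gap of order $n-3s+2$, and you correctly dispose of the degenerate case $S=V(C_n)$.
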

\begin{proof}
First let $n \not\equiv_3 0$.  
Then $\gamma_R(P_{n-1}) < \gamma_R(C_n)$ (Proposition \ref{o33}). 
Corollary \ref{cycle2} implies $C_n \in \mathcal{R}^1_{CVR}$.  
By Corollary \ref{path1}, $\gamma_R(P_{n-1}-S) \leq \gamma_R(P_{n-1}) < \gamma_R(C_n)$
 for any $S \subseteq V(P_{n-1})$. Hence $C_n \in \mathcal{R}^s_{CVR}$ for all $s\geq 1$.

Now let $n \equiv_3 0$. Since  $\gamma_R(P_{n-1}) = \gamma_R(C_n)$ (Proposition \ref{o33}) 
and  $P_{n-1} \in \mathcal{R}^s_{CVR}$ if and only if  $s \geq \left\lfloor (n-1)/3 \right\rfloor + 1$
 (Proposition \ref{path2}),  it immediately follows that 
$C_n \in \mathcal{R}^k_{CVR}$ if and only if $k \geq \left\lfloor (n-1)/3 \right\rfloor + 2 =  n/3 + 1$. 
\end{proof}

\section{$\gamma_R$-graphs} \label{gammaR}

The idea of using the intersections of a family of sets to define
the adjacencies of a graph is so natural that it arose independently in
a number of areas in connection with both pure and applied problems
(see Roberts \cite{ro} and McKee and  McMorris \cite{mm}). 
Formally, for each integer $p \geq 1$, 
the $p$-{\em intersection graph} $\Omega_p(\mathcal{F})$ 
of the family $\mathcal{F} = \{S_1, S_2,..,S_n\}$ of subsets of a finite set $S$ 
is defined to be the graph $G$ having $V(G) =  \mathcal{F}$ with 
$S_iS_j \in E(G)$ if and only if $i \not = j$ and $|S_i \cap S_j| \geq p$. 
A graph $G$ is a $p$-{\em intersection graph} if there exists a family 
$\mathcal{F}$ such  that $G  \simeq \Omega_p(\mathcal{F})$.
The concept of the $p$-intersection graph was introduced by 
Jacobson, McMorris and Scheinerman \cite{jms}. 

For each generic invariant $\mu(G)$, a set $S$ having the desired property 
and cardinality  $\mu(G)$ is called a $\mu$-{\em set} of $G$. 
Denote by $\mathscr{D}_\mu(G)$ the family of all $\mu$-sets of $G$ 
and $\#\mu (G) = |\mathscr{D}_\mu(G)|$. 
Unfortunately, when $\mu$ is a  domination-related parameter,  
among all $\Omega_p(\mathscr{D}_\mu(G))$ graphs, 
 only the case   $\mu \equiv \gamma$ and $p = \gamma(G)-1$ is studied. 
The  ($\gamma(G)-1$)-intersection graph $\Omega_{\gamma(G)-1}(\mathscr{D}_\gamma(G))$
(under the name  $\gamma$-{\em graph of a graph})
 was first introduced  by Subramanian and  Sridharan \cite{ss} in 2008 
and was studied in  \cite{sar}, \cite{b} and  \cite{av}. 
There are at least $2$ important for investigation spanning subgraphs 
of  $\Omega_{\mu(G)-1}(\mathscr{D}_\mu(G))$. 
To define them we need to restrict the adjacency between $2$  elements
$D_i$ and $D_j$ of  $\mathscr{D}_\mu(G)$ as follows: 

\begin{itemize}
\item[$\mathcal{A}_1$:\ ] $D_i$ and $D_j$ are adjacent  if and only if  there are $v_i \in D_i$ and  $v_j \in D_j$
																											such that $v_iv_j \in E(G)$,  $D_i = D_j  - \{v_j\} \cup \{v_i\}$ and $D_j = D_i  - \{v_i\} \cup \{v_j\}$.     
\item[$\mathcal{A}_2$:\ ] $D_i$ and $D_j$ are adjacent  if and only if  there are $v_i \in D_i$ and  $v_j \in D_j$
																											such that $v_iv_j \not\in E(G)$,  $D_i = D_j  - \{v_j\} \cup \{v_i\}$ and $D_j = D_i  - \{v_i\} \cup \{v_j\}$.     
\end{itemize}

Define the graphs $\Omega_{\mu(G)-1}^a(\mathscr{D}_\mu(G))$ and 
$\Omega_{\mu(G)-1}^n(\mathscr{D}_\mu(G))$ as spanning subgraphs of 
$\Omega_{\mu(G)-1}(\mathscr{D}_\mu(G))$, where the adjacency is given by 
$\mathcal{A}_1$ and $\mathcal{A}_2$, respectively. 
In 2011, Fricke et al. \cite{fhhh}  began an  investigation on the graph
 $\Omega_{\gamma(G)-1}^a(\mathscr{D}_\gamma(G))$
 (under the name $\gamma$-{\em graph of a graph $G$}).  
For additional results on these  graphs, see \cite{chh} and \cite{e}.
Very recently, Mynhardt and  Teshima \cite{mt}, 
  present initial results  on the graphs $\Omega_{\mu(G)-1}^a(\mathscr{D}_\mu(G))$, 
	where $\mu$ is the locating-domination number, total-domination number, 
	paired-domination number, and the upper-domination number.

In this section we concentrate on the case when $\mu \equiv \gamma_R$. 
Formally, the $\gamma_R$-{\em graph of a graph} $G$,  is a graph which  vertex set
is $\mathscr{D}_R(G)$. 
 Adjacency between two  $\gamma_R$-functions $f$ and $g$ can be defined in several ways:

\begin{itemize}
\item[$\mathcal{P}_1$:\ ] $f$ and $g$ are adjacent if  there exist 
                                                   vertices  $u,v \in V(G)$ with $f(u)=2$ and $f(v)=0$ 
                                                    such that $g = ((V_0^f-\{v\}) \cup \{u\}; V_1^f; (V_2^f-\{u\}) \cup \{v\})$. 
																										
\item[$\mathcal{P}_2$:\ ]  $f$ and $g$ are adjacent if  there exist vertices  $u,v \in V(G)$
                                                    with $f(u)=2$ and $f(v)=1$ 
                                                    such that  $g = (V_0^f; (V_1^f-\{v\}) \cup \{u\}; (V_2^f-\{u\}) \cup \{v\})$. 
																										
\item[$\mathcal{P}_3$:\ ]  $f$ and $g$ are adjacent if  
                                                     (a) there exist vertices  $u,v \in V(G)$ with $f(u)=f(v)=1$, and 
																										  (b) $g(z) = f(z)$ for all $z \in V(G) - \{u,v\}$  and $\{g(u), g(v)\} = \{0,2\}$
																											 (clearly $u$ and $v$ are adjacent).  
																											
\item[$\mathcal{P}_4$:\ ]  $f$ and $g$ are adjacent if  there exists a triangle 
                                                      $u,v,w,u$  with $f(u)=f(v)=1$, and																								
																		$g = ((V_0^f - \{w\}) \cup \{u,v\}; V_1^f-\{u,v\}; V_2^f \cup \{w\})$. 
																											
\item[$\mathcal{P}_{1^a}$:\ ] $f$ and $g$ are adjacent if  there exist 
                                                   vertices  $u,v \in V(G)$ with $f(u)=2$ and $f(v)=0$ 
                                                    such that $g = ((V_0^f-\{v\}) \cup \{u\}; V_1^f; (V_2^f-\{u\}) \cup \{v\})$ and $uv \in E(G)$.  
																										
\item[$\mathcal{P}_{1^n}$:\ ] $f$ and $g$ are adjacent if  there exist 
                                                   vertices  $u,v \in V(G)$ with $f(u)=2$ and $f(v)=0$ 
                                                    such that $g = ((V_0^f-\{v\}) \cup \{u\}; V_1^f; (V_2^f-\{u\}) \cup \{v\})$ and $uv \not\in E(G)$.  																										
\end{itemize}
\medskip

Let us say that an adjacency  has the following:

\begin{itemize}
\item[(i)]  type \  {\bf i}    if  it   is given by  $\mathcal{P}_i$, $i \in \{ 1,1^a,1^n,2,3,4\}$.

\item[(ii)]  type \  {\bf ij}    if it   is given by  $\mathcal{P}_i   \vee  \mathcal{P}_j$, where 
																								$i \in \{ 1,1^a,1^n,2,3\}$, $j\in \{2,3,4\}$ and $i \not \equiv j$. 
\item[(iii)]  type \  {\bf ijk}    if it  is given by  $\mathcal{P}_i   \vee  \mathcal{P}_j  \vee  \mathcal{P}_k$, 
                       where  $i \in \{ 1,1^a,1^n,2\}$, $j \in \{ 2,3\}$,  $k \in \{3,4\}$, and  $i \not= j \not=k$. 
											
\item[(iv)]  type \  {\bf i234}    if it   is given by  
                       $\mathcal{P}_i   \vee  \mathcal{P}_2  \vee  \mathcal{P}_3 \vee  \mathcal{P}_4$, 
											 where  $i \in \{ 1,1^a,1^n\}$.									
\end{itemize}
\medskip

Let $\mathfrak{T}$  be the set of all  above defined  types of   adjacency and $\mathfrak{t} \in \mathfrak{T}$.
The $\mathfrak{t}$-$\gamma_R$-{\em graph of a graph} $G$,
denoted by  $G(\mathfrak{t}, \gamma_R)$, 
is a $\gamma_R$-graph of $G$ with adjacency having type $\mathfrak{t}$.
  We say that $\gamma_R$-functions $f$ and $h$ on $G$ are $\mathfrak{t}$-{\em adjacent} 
whenever $fg$ is an edge of $G(\mathfrak{t}, \gamma_R)$.
 Clearly $G(\mathfrak{t}, \gamma_R)$ is a 
spanning subgraph of $G({\bf 1234}, \gamma_R)$.

The {\em cartesian product} of graphs $G_1$ and $G_2$, denoted by $G_1\square G_2$
 has vertex set  $V (G_1\square G_2) = V (G_1) \times V (G_2)$ and two vertices $(u_1, u_2)$
 and $(v_1, v_2)$ are adjacent if $u_1 = v_1$ and $u_2v_2 \in  E(G_2)$ or
$u_1v_1 \in E(G_1)$ and $u_2 = v_2$.

\begin{theorem}\label{cart}
Let $G_1$ and $G_2$ be disjoint graphs and $G_0 = G_1 \cup G_2$. 
Then $G_0(\mathfrak{t}, \gamma_R)$ and $ G_1(\mathfrak{t}, \gamma_R) \square G_2(\mathfrak{t}, \gamma_R)$ 
are isomorphic, where $\mathfrak{t} \in \mathfrak{T}$.
\end{theorem}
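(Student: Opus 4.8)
The plan is to exhibit an explicit isomorphism between the vertex sets and show it preserves adjacency in both directions, uniformly over all types $\mathfrak{t} \in \mathfrak{T}$. First I would set up the bijection on vertices. Since $G_0 = G_1 \cup G_2$ is a disjoint union, any RD-function $f$ on $G_0$ restricts to RD-functions $f|G_1$ and $f|G_2$, and conversely any pair of RD-functions on $G_1$ and $G_2$ glues to one on $G_0$; moreover the weights add, so $f(V(G_0)) = (f|G_1)(V(G_1)) + (f|G_2)(V(G_2))$. From this it follows that $f$ is a $\gamma_R$-function on $G_0$ if and only if $f|G_1$ and $f|G_2$ are $\gamma_R$-functions on $G_1$ and $G_2$ respectively, giving $\gamma_R(G_0) = \gamma_R(G_1) + \gamma_R(G_2)$ and a bijection $\Phi : \mathscr{D}_R(G_0) \to \mathscr{D}_R(G_1) \times \mathscr{D}_R(G_2)$ defined by $\Phi(f) = (f|G_1, f|G_2)$. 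This $\Phi$ is the candidate isomorphism; the vertex set of the cartesian product is exactly this product by definition.

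Next I would analyze adjacency. The key observation is that every adjacency rule $\mathcal{P}_i$ (for $i \in \{1, 1^a, 1^n, 2, 3, 4\}$) is \emph{local}: passing from $f$ to an adjacent $g$ changes the values of $f$ on a bounded set of vertices (two vertices for $\mathcal{P}_1, \mathcal{P}_{1^a}, \mathcal{P}_{1^n}, \mathcal{P}_2, \mathcal{P}_3$, and three forming a triangle for $\mathcal{P}_4$), and all the vertices involved are mutually adjacent or lie in a common small induced configuration. Since $G_1$ and $G_2$ are disjoint with no edges between them, any such configuration lies entirely within one of the two components. Therefore a single $\mathcal{P}_i$-move from $f$ to $g$ either alters $f$ only on $G_1$ (leaving $f|G_2 = g|G_2$) or only on $G_2$ (leaving $f|G_1 = g|G_1$). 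I would verify this component-confinement for each of the six primitive rules; it is the crux of the whole argument.

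Having established confinement, the equivalence of adjacencies follows directly. Two $\gamma_R$-functions $f, g$ on $G_0$ are $\mathfrak{t}$-adjacent, where $\mathfrak{t}$ is a disjunction of rules $\mathcal{P}_{i_1} \vee \cdots \vee \mathcal{P}_{i_m}$, precisely when some constituent rule $\mathcal{P}_{i_\ell}$ applies; by confinement this move happens inside exactly one component, say $G_1$, which means $f|G_2 = g|G_2$ while $f|G_1$ and $g|G_1$ are $\mathcal{P}_{i_\ell}$-adjacent in $G_1$ (the rule restricts verbatim, since the defining vertices, their values, and the edge/non-edge/triangle conditions all reside in $G_1$). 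In the language of $\Phi$, this says $\Phi(f)$ and $\Phi(g)$ differ in exactly one coordinate and are adjacent there with respect to the same type $\mathfrak{t}$ in that component's $\gamma_R$-graph. That is exactly the cartesian-product adjacency condition. Running the argument in reverse gives the converse, so $\Phi$ is an isomorphism $G_0(\mathfrak{t}, \gamma_R) \simeq G_1(\mathfrak{t}, \gamma_R) \square G_2(\mathfrak{t}, \gamma_R)$.

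The main obstacle I anticipate is the case check in the middle step: one must confirm that each primitive adjacency type, including the triangle-based $\mathcal{P}_4$ and the edge-sensitive $\mathcal{P}_{1^a}, \mathcal{P}_{1^n}$, genuinely confines to one component and restricts without distortion. This is routine once the disjointness of $G_1$ and $G_2$ is invoked — no edge crosses between components, so no move can straddle them and the incidence conditions are inherited — but it must be stated carefully for each rule, and one should note that a composite type $\mathfrak{t}$ never creates a move using one primitive rule in $G_1$ simultaneously with another in $G_2$, since a single edge of $G(\mathfrak{t}, \gamma_R)$ corresponds to applying one rule once.
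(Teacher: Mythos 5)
Your proposal is correct and follows essentially the same route as the paper: the bijection $\Phi(f)=(f|G_1,f|G_2)$ together with the observation that every single move is confined to one component, which is exactly the cartesian-product adjacency. One small caveat: for the rules $\mathcal{P}_1$, $\mathcal{P}_{1^n}$ and $\mathcal{P}_2$ the two vertices involved need not be adjacent, so confinement there does not follow from ``no edge crosses between components'' but from minimality --- a cross-component swap would leave one restriction an RD-function of weight strictly below $\gamma_R(G_i)$, which is impossible.
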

\begin{proof}
If $f$ is a $\gamma_R$-function on $G_0$ then clearly $f|_{G_i}$ is a $\gamma_R$-function on $G_i$, $i=1,2$. 
Also, if $h_i$ is a $\gamma_R$-function on $G_i$, $i=1,2$, 
then a RD-function $h$ on $G_0$ defined as $h(u) = h_i(u)$ when $u \in V(G_i)$, $i=1,2$, 
is a $\gamma_R$-function on $G_0$. Thus, there is a bijection  
$\varphi: V( G_0(\mathfrak{t}, \gamma_R) ) \rightarrow V( G_1(\mathfrak{t}, \gamma_R)) \times  V(G_2(\mathfrak{t}, \gamma_R))$ 
given by $\varphi(f)  = (f|_{G_1}, f|_{G_2})$. 

Let $fh \in E( G_0(\mathfrak{t}, \gamma_R) )$. 
By the definition of the adjacency in $ G_i(\mathfrak{t}, \gamma_R)$, 
$i=0,1,2$,  we have that either $f|_{G_1}h|_{G_1}$ is an edge of $ G_1(\mathfrak{t}, \gamma_R)$ and 
$f|_{G_2} = h|_{G_2}$, or $f|_{G_1} = h|_{G_1}$ and $f|_{G_2}h|_{G_2}$ is an edge of $ G_2(\mathfrak{t}, \gamma_R)$. 
Thus $\varphi(f)\varphi(h)$ is an edge of $ G_1(\mathfrak{t}, \gamma_R) \square G_2(\mathfrak{t}, \gamma_R)$.  
Finally, let $(f_1, f_2)(h_1, h_2) \in E( G_1(\mathfrak{t}, \gamma_R) \square G_2(\mathfrak{t}, \gamma_R))$. 
The adjacency in $ G_i(\mathfrak{t}, \gamma_R)$, $i=0,1,2$, 
now implies $\varphi^{-1}((f_1, f_2))\varphi^{-1}((h_1, h_2))$ is an edge of  $G_0(\mathfrak{t}, \gamma_R)$. 
\end{proof}

\begin{example}\label{Pm}
By Proposition \ref{path} (i)-(ii), it immediately follows that $P_n({\bf 1^a}, \gamma_R)$
 is edgeless when $n \not\equiv_3 2$.  So, let $n \equiv_3 2$. 
By Proposition \ref{path} (iii) and its proof, we have that  $P_n({\bf 1^a}, \gamma_R)$ 
has  (a) $r=(n+1)(n+4)/18$ vertices of the form $[020]^s1[020]^p1[020]^t$,  
(b) $(n+1)/3$ vertices of the form $[020]^k02[020]^l$, and (c) the vertex $20[020]^{\left\lfloor n/3 \right\rfloor}$. 
 All vertices from (a) are isolated in $P_n({\bf 1^a}, \gamma_R)$, while the rest vertices induced the path:

$20[020]^s, [020]20[020]^{s-1},  [020]^220[020]^{s-2}, \dots, [020]^s20,  [020]^s02$, 
 \\
where $s = \left\lfloor n/3 \right\rfloor$. Thus $P_n({\bf 1^a}, \gamma_R) = \overline{K_r} \cup P_{s+2}$. 
\end{example}

\begin{example}\label{Cn}
Proposition \ref{cycle} (i)-(ii) implies $C_n({\bf 1^a}, \gamma_R) = \overline{K_3}$
 when  $n \equiv_3 0$ and $C_n({\bf 1^a}, \gamma_R) = \overline{K_n}$ when $n \equiv_3 1$. 
It remains the case $n \equiv_3 2$. 
By the proof of Proposition \ref{cycle} (iii), 
all $n(n+1)/6$ $\gamma_R$-functions $f_i$ on $C_n$ with $V_1^{f_i} \not= \emptyset$ are 
isolated vertices of $C_n({\bf 1^a}, \gamma_R)$. 
The rest $n$ $\gamma_R$-functions on $C_n$ are 
  $(l_k(x_i))_{i=k}^{n-1+k} = [020]^ {\left\lfloor n/3 \right\rfloor}02$, $k = 0,1,..,n-1$. 
	Clearly all neighbors of $l_k$ in $C_n({\bf 1^a}, \gamma_R)$ are $l_{k-3}$ and $l_{k+3}$. 
	It remains to note that $l_0,l_3,..,l_{3k}, l_{1}, l_4,.., l_{3k+1}, l_2,l_5,..,l_{3k+2}=l_0$ 
	is a cycle, where $n=3k+2$. Thus $C_n({\bf 1^a}, \gamma_R) = \overline{K_a} \cup C_n$, 
	where $a=n(n+1)/6$.  
	\end{example}

We leave the description of the graphs $P_m(\mathfrak{t}, \gamma_R)$ and 
$C_n(\mathfrak{t}, \gamma_R)$ when  $\mathfrak{t} \in \mathfrak{T} - \{{\bf1^a}\}$ to the reader.

\begin{proposition}\label{defin}
Let $G$ be a graph, $f$ a $\gamma_R$-function on $G$,  $u,v \in V(G)$ and $f(u)=2$. 
\begin{itemize}
\item[(i)]  Let $f(v) = 0$ and $f_1 = ((V_0^f-\{v\}) \cup \{u\}; V_1^f; (V_2^f-\{u\}) \cup \{v\})$ 
                     a RD-function on $G$.
										Then $f_1$ is a $\gamma_R$-function on $G$,  $dist(u,v) \leq 2$ and $|pn[u,V_2^f]| \geq 2$. 
										If $dist(u,v) = 2$ then $pn[u,V_2^f] \subseteq N(v)$. 
										If $uv \in E(G)$ then $pn[u,V_2^f] \subseteq N[v]$. 
										If $G$ is a tree then $uv \in E(G)$,  $pn[u,V_2^f] = \{u,v\}$, and 
										$f_2 = (V_0^f-\{v\}; V_1^f \cup \{u,v\}; V_2^f-\{u\})$  is a $\gamma_R$-function on $G$.
										
\item[(ii)] Let $f(x) = 1$ and $h = (V_0^f; (V_1^f-\{x\}) \cup \{u\}; (V_2^f-\{u\}) \cup \{x\})$ 
                   a RD-function on $G$. Then (a) $G$  has a cycle and 
										(b) $h$ is a $\gamma_R$-function on $G$, $dist(u,x) = 2$, $u \in pn[u,V_2^f]$, 
										     $pn[u,V_2^f] - \{u\} \subseteq N(x)$ and $|pn[u,V_2^f]| \geq 3$.			
\end{itemize}
\end{proposition}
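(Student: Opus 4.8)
The guiding idea throughout is \emph{local surgery}: starting from the given $\gamma_R$-function $f$, I would build candidate RD-functions by relabelling only $u$, the second distinguished vertex ($v$ or $x$), and a few private neighbours, and then invoke two facts repeatedly — (1) no RD-function can have weight below $\gamma_R(G)$, and (2) Observation \ref{o2}, which forbids an edge between a $1$-vertex and a $2$-vertex in any $\gamma_R$-function. Since $f_1$ and $h$ are given to be RD-functions and are obtained from $f$ by weight-preserving relabellings, they are $\gamma_R$-functions immediately; the auxiliary function $f_2$ will likewise be weight-preserving, so only its RD-property needs checking. All the remaining assertions will fall out of the question "who dominates vertex $w$ after the surgery?".

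For (i), the core is $|pn[u,V_2^f]|\ge 2$, which I would prove by contrapositive. Lowering $u$ from $2$ to $0$ outright destroys the domination of exactly the vertices in $pn[u,V_2^f]$; if this set were empty the result is already an RD-function of weight $\gamma_R(G)-2$, and if it were a single vertex $w$ I would repair it cheaply (raise $w$ to $1$ if $w\neq u$, or simply leave $u$ at label $1$ if $w=u$), still landing below $\gamma_R(G)$ — a contradiction. For $dist(u,v)\le 2$ I would use that $f_1$ is RD: any private neighbour $w\neq u$ is (by Observation \ref{o2}) a $0$-vertex adjacent to $u$ whose only $2$-neighbour was $u$, so in $f_1$ it can be covered only by the new $2$-vertex $v$, forcing $w\in N(v)$ and hence $dist(u,v)\le dist(u,w)+dist(w,v)=2$; the degenerate cases $u\in pn$ or $w=v$ give distance $1$. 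The inclusions $pn\subseteq N(v)$ (when $dist=2$) and $pn\subseteq N[v]$ (when $uv\in E$) follow from the same bookkeeping, after first noting that $u,v\notin pn$ when $dist(u,v)=2$.

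For the tree case of (i), acyclicity does the work. If $uv\notin E$ then $dist(u,v)=2$ gives $u\notin pn$ and $pn\subseteq N(u)\cap N(v)$; two such private neighbours (which exist because $|pn|\ge 2$) would produce a $4$-cycle $u\,w_1\,v\,w_2$, impossible in a tree, so in fact $uv\in E$. Then a further private neighbour $w\notin\{u,v\}$ would be adjacent to both $u$ and $v$, creating a triangle $u,v,w$, again impossible; hence $pn=\{u,v\}$. Finally $f_2$ (push both $u$ and $v$ to label $1$) is RD because, $pn$ being exactly $\{u,v\}$, no surviving $0$-vertex relied solely on $u$.

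Part (ii) runs along the same lines but with sharper conclusions, and here lies the main obstacle: upgrading $|pn|\ge 2$ to $|pn|\ge 3$. First, $ux\notin E$ by Observation \ref{o2} (labels $1$ and $2$), so $dist(u,x)\ge 2$; the RD-property of $h$ forces every $0$-labelled private neighbour of $u$ into $N(x)$, and at least one such neighbour exists (otherwise lowering $u$ to $1$ is already a cheaper RD-function), yielding a common neighbour of $u$ and $x$, so $dist(u,x)=2$. The membership $u\in pn[u,V_2^f]$ I would get by contradiction: if $u$ had another $2$-neighbour, then setting $u\to 0$ and $x\to 2$ covers $u$ through that neighbour and every former private neighbour through $x$, producing weight $\gamma_R(G)-1$. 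The crucial step is to exclude \emph{exactly one} $0$-labelled private neighbour $w$: in that case the swap $(u\to 0,\, w\to 2)$ is genuinely RD (the only vertices $u$ privately dominated were $u$ and $w$, both now covered by the new $2$-vertex $w$), hence a $\gamma_R$-function; but $w$ then carries label $2$, $x$ still carries label $1$, and $w\in N(x)$, contradicting Observation \ref{o2}. Thus there are at least two $0$-labelled private neighbours, which with $u$ give $|pn[u,V_2^f]|\ge 3$ and, being commonly adjacent to $u$ and $x$, exhibit the $4$-cycle proving (a). The point to watch is precisely that the $(u\to 0,\, w\to 2)$ swap is RD \emph{only} when $w$ is the unique private neighbour — promoting one neighbour cannot rescue a second — which is exactly why the threshold obtained is $3$ rather than $2$.
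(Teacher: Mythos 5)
Your proof is correct and follows essentially the same route as the paper: the weight-reduction argument for $|pn[u,V_2^f]|\ge 2$, locating the private neighbours in $N[v]$ (resp.\ $N(x)$) via the RD-property of $f_1$ (resp.\ $h$), the $4$-cycle/triangle argument in the tree case, and a local relabelling that contradicts Observation~\ref{o2} to rule out $|pn[u,V_2^f]|=2$ in (ii). The only immaterial differences are that the paper obtains the contradiction in (ii) by setting both $u$ and $w$ to label $1$ and violating $\Delta(\langle V_1\rangle)\le 1$ rather than by your $(u\to 0,\ w\to 2)$ swap, and that you make explicit the step $uv\in E(G)$ for trees, which the paper leaves implicit.
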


\begin{proof}
First note that for each $\gamma_R$-function $l$ on $G$, 
if $x \in V_2^l$ then $|pn[x,V_2^l]| \geq 2$. 
Clearly $f_1$ and $h$ are $\gamma_R$-functions on $G$. 

(i) A vertex $v$ has to dominate all vertices in $pn[u,V_2^f]$. 
Hence $dist(u,v) \leq 2$ and if the equality holds, then 
 $pn[u,V_2^f] \subseteq N(v)$. Now let $uv \in E(G)$. 
Then clearly $pn[u,V_2^f] \subseteq N[v]$. Hence  if $G$ is a tree, then 
$pn[u,V_2^f]  = \{u,v\}$. But then $f_1 = (V_0^f-\{v\}; V_1^f \cup \{u,v\}; V_2^f-\{u\})$
 is a $\gamma_R$-function on $G$. 

(ii)  By Observation \ref{o2}, $dist(u,v) \geq 2$ and $u \in pn[u,V_2^f]$. 
Since $v$ has to dominate all vertices in $pn[u,V_2^f]-\{u\}$, 
$dist(u,v) = 2$ and $pn[u,V_2^f] - \{u\} \subseteq N(v)$. 
Suppose $pn[u,V_2^f] = \{u,w\}$. Then $wv \in E(G)$ and $f(w)=0$. 
Now $l_1 = (V_1^f-\{w\}; V_1^f \cup \{w,u\}; V_2^f - \{u\})$ is a $\gamma_R$-function 
on $G$ with $l_1(u) = l_1(w) = l_1(v) = 1$, a contradiction with Observation \ref{o2}. 
Thus $|pn[u,V_2^f]| \geq 3$ and since $pn[u,V_2^f]-\{u\} \subseteq N(v)$, 
$G$ has a cycle. 
\end{proof}

The next three corollaries are an immediate consequence of 
Proposition \ref{defin}.

\begin{corollary}\label{classescor}
If $G$  is a tree of order at least $3$, then all the following hold: 
\begin{itemize}
\item[(a)] $G(\mathfrak{t}, \gamma_R)$ is edgeless for all 
       $\mathfrak{t} \in \{\bf 1^n, 2, 4, 1^n2, 1^n4, 24, 1^n24\}$, 
\item[(b)] $G({\bf 1^a}, \gamma_R) \equiv  G(\mathfrak{t}, \gamma_R)$
  for all $\mathfrak{t} \in \{\bf 1, 1^a2, 1^a4, 12, 14, 1^a24, 124\}$, 
\item[(c)]  $G({\bf 3}, \gamma_R) \equiv  G(\mathfrak{t}, \gamma_R)$
  for all $\mathfrak{t} \in \{\bf 1^n3, 23, 34, 1^n23, 1^n34, 234, 1^n234\}$, 
\item[(d)]  $G({\bf 1^a3}, \gamma_R) \equiv  G(\mathfrak{t}, \gamma_R)$
  for all $\mathfrak{t} \in \{\bf 13, 1^a23, 1^a34, 123, 134, 1^a234, 1234\}$.
	\end{itemize}
\end{corollary}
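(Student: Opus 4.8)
The plan is to reduce every compound adjacency type to one of a few ``surviving'' primitive relations by exploiting the tree structure through Proposition \ref{defin}. By definition, the adjacency of a type of the form $\mathbf{ij\cdots}$ is the union $\mathcal{P}_i \vee \mathcal{P}_j \vee \cdots$ of the primitive adjacency relations on the common vertex set $\mathscr{D}_R(G)$; hence it suffices to decide, on a tree $G$ of order at least $3$, which of $\mathcal{P}_1,\mathcal{P}_{1^a},\mathcal{P}_{1^n},\mathcal{P}_2,\mathcal{P}_3,\mathcal{P}_4$ are empty and which coincide, and then read off each identity as a set-theoretic collapse of the union.

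First I would record four facts, all immediate from Proposition \ref{defin} together with the absence of cycles (and a fortiori of triangles) in a tree. (1) If $f$ and $g$ are $\mathcal{P}_1$-adjacent, then in the notation of Proposition \ref{defin}(i) the swapped function $f_1$ is a $\gamma_R$-function, so the last clause of (i) forces $uv \in E(G)$; thus on a tree $\mathcal{P}_1$ and $\mathcal{P}_{1^a}$ define the very same relation, and (2) $\mathcal{P}_{1^n}$ is the empty relation. (3) A $\mathcal{P}_2$-adjacency would, by Proposition \ref{defin}(ii)(a), force $G$ to contain a cycle, which is impossible; hence $\mathcal{P}_2$ is empty. (4) $\mathcal{P}_4$ requires a triangle $u,v,w$, which a tree cannot contain, so $\mathcal{P}_4$ is empty too. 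The relation $\mathcal{P}_3$ is left untouched and may well be nonempty.

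With these in hand, each part follows by substituting the surviving relations into the defining union and deleting the empty summands. For (a), every listed type is a union of relations drawn from $\{\mathcal{P}_{1^n},\mathcal{P}_2,\mathcal{P}_4\}$, all empty, so the graph is edgeless. For (b), each listed type reduces, after replacing $\mathcal{P}_1$ by $\mathcal{P}_{1^a}$ and discarding the empty $\mathcal{P}_2,\mathcal{P}_4$, to the single relation $\mathcal{P}_{1^a}$, i.e.\ to $G(\mathbf{1^a},\gamma_R)$. Likewise (c) collapses each type to $\mathcal{P}_3$ alone (every other summand present being empty), and (d) collapses each type to $\mathcal{P}_{1^a}\vee\mathcal{P}_3$, i.e.\ to $G(\mathbf{1^a3},\gamma_R)$; since all the graphs in question share the vertex set $\mathscr{D}_R(G)$, equality of edge relations gives the asserted $\equiv$. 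I expect no genuine obstacle: the argument is a finite, mechanical check over the roughly two dozen listed types. The only point deserving care is the \emph{direction} of fact (1)---one must observe that a $\mathcal{P}_1$-edge, which by definition carries no adjacency constraint, is automatically a $\mathcal{P}_{1^a}$-edge on a tree, so that passing from $\mathcal{P}_1$ to $\mathcal{P}_{1^a}$ neither removes nor adds any edge.
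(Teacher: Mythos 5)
Your proposal is correct and follows exactly the route the paper intends: the paper gives no written proof, stating only that the corollary is ``an immediate consequence of Proposition \ref{defin},'' and your argument is precisely the verification left implicit there --- on a tree, $\mathcal{P}_{1^n}$, $\mathcal{P}_2$, $\mathcal{P}_4$ are empty and $\mathcal{P}_1$ coincides with $\mathcal{P}_{1^a}$ by Proposition \ref{defin}, after which each compound type collapses as you describe. No gaps; your care about the direction of the identification $\mathcal{P}_1=\mathcal{P}_{1^a}$ is exactly the right point to check.
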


\begin{corollary}\label{1a1a3}
If $G$  is a tree and $f,f_1 \in \mathscr{D}_{\gamma_R}(G)$ are ${\bf 1^a}$-adjacent,   
then there is $f_2 \in \mathscr{D}_{\gamma_R}(G)$ which is 
${\bf 3}$-adjacent to both $f$ and $f_1$. 
\end{corollary}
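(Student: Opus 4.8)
The plan is to produce the common ${\bf 3}$-neighbor explicitly and to check both adjacencies at once, using the tree-specific conclusion of Proposition \ref{defin}(i). First I would unpack the hypothesis: since $f$ and $f_1$ are ${\bf 1^a}$-adjacent, the definition $\mathcal{P}_{1^a}$ supplies vertices $u,v \in V(G)$ with $f(u)=2$, $f(v)=0$, $uv \in E(G)$, and
\[
f_1 = \bigl((V_0^f-\{v\}) \cup \{u\};\ V_1^f;\ (V_2^f-\{u\}) \cup \{v\}\bigr).
\]
Reading off values, this means $f_1(u)=0$, $f_1(v)=2$, and $f_1(z)=f(z)$ for every $z \in V(G)-\{u,v\}$. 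Note that $f_1$ is in particular an RD-function on $G$, since $f_1 \in \mathscr{D}_{\gamma_R}(G)$.

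Next I would invoke Proposition \ref{defin}(i), whose hypotheses are met verbatim: $f$ is a $\gamma_R$-function, $f(u)=2$, $f(v)=0$, and $f_1$ plays the role of the RD-function in that statement. Because $G$ is a tree, the proposition hands us directly that $uv \in E(G)$, that $pn[u,V_2^f]=\{u,v\}$, and — crucially — that
\[
f_2 = \bigl(V_0^f-\{v\};\ V_1^f \cup \{u,v\};\ V_2^f-\{u\}\bigr)
\]
is a $\gamma_R$-function on $G$. In terms of values this gives $f_2(u)=f_2(v)=1$ and $f_2(z)=f(z)$ for all $z \in V(G)-\{u,v\}$.

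The remaining step is to verify that this single $f_2$ is ${\bf 3}$-adjacent to each of $f$ and $f_1$, both times witnessed by the same adjacent pair $\{u,v\}$ of vertices carrying $f_2$-value $1$. For adjacency with $f$: we have $f_2(u)=f_2(v)=1$, the values $\{f(u),f(v)\}=\{2,0\}=\{0,2\}$, and $f$ agrees with $f_2$ off $\{u,v\}$, so the requirements (a)--(b) of $\mathcal{P}_3$ hold. For adjacency with $f_1$: again $f_2(u)=f_2(v)=1$, now $\{f_1(u),f_1(v)\}=\{0,2\}$, and $f_1$ agrees with $f_2$ off $\{u,v\}$, so $\mathcal{P}_3$ is satisfied once more. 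Since $uv \in E(G)$, the pair is adjacent in both cases, as $\mathcal{P}_3$ requires.

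I do not anticipate a genuine obstacle here: the corollary falls out immediately from Proposition \ref{defin}(i), whose tree-specific conclusion is tailored to deliver exactly the mediating function $f_2$ with both of $u,v$ relabelled to $1$. The only point demanding care is confirming that the RD-function hypothesis of Proposition \ref{defin}(i) is in force, but this is automatic because $f_1$ is assumed to be a $\gamma_R$-function. Consequently the whole argument is a short verification rather than a construction, and the single function $f_2$ simultaneously resolves both required ${\bf 3}$-adjacencies.
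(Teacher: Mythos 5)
Your proof is correct and matches the paper's intent exactly: the paper states this corollary as an immediate consequence of Proposition \ref{defin}(i), whose tree case supplies precisely the function $f_2 = (V_0^f-\{v\};\ V_1^f \cup \{u,v\};\ V_2^f-\{u\})$, and your verification of the two $\mathcal{P}_3$-adjacencies via the pair $\{u,v\}$ is the routine check the paper leaves implicit.
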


\begin{corollary}\label{treescor}
Let $G$ be a tree, $f$ a $\gamma_R$-function on  $G$, $v \in V_0^f$ and $x \in V_2^f$.
Then (a) there is at most one vertex  $u \in V_2^f$ such that $f_{vu} = ((V_0^f-\{v\}) \cup \{u\}; V_1^f; (V_2^f-\{u\}) \cup \{v\})$
 is also a $\gamma_R$-function on  $G$, and 
(b) there is at most one vertex  $y \in V_0^f$ such that $f_{xy} = ((V_0^f-\{y\}) \cup \{x\}; V_1^f; (V_2^f-\{x\}) \cup \{y\})$
 is also a $\gamma_R$-function on  $G$.
\end{corollary}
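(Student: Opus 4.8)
The plan is to recognize that both label-swaps appearing in the statement are special cases of the move analyzed in Proposition \ref{defin}(i), and then to exploit the rigidity that a tree structure forces on that move. Recall that in Proposition \ref{defin}(i) one starts from a $\gamma_R$-function $f$ together with a vertex of value $2$ and a vertex of value $0$, and raises the $0$ to $2$ while lowering the $2$ to $0$; when $G$ is a tree the conclusion is that the two swapped vertices are adjacent and that the private neighbor set $pn[\cdot, V_2^f]$ of the value-$2$ endpoint consists of exactly these two vertices. Both $f_{vu}$ in part (a) and $f_{xy}$ in part (b) are precisely moves of this form, so in each case the hypothesis that the resulting function is again a $\gamma_R$-function places us directly in the situation of that proposition.

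For part (a), I would fix $v \in V_0^f$ and suppose, toward a contradiction, that two distinct vertices $u_1, u_2 \in V_2^f$ both yield $\gamma_R$-functions $f_{vu_1}$ and $f_{vu_2}$. Applying Proposition \ref{defin}(i) to each, with the value-$2$ vertex being $u_i$ and the value-$0$ vertex being $v$, gives $pn[u_i, V_2^f] = \{u_i, v\}$ for $i = 1, 2$. In particular $v \in pn[u_i, V_2^f]$, which by the definition of a private neighbor means $N[v] \cap V_2^f = \{u_i\}$. But $N[v] \cap V_2^f$ is a single set determined only by $v$ and $f$, so it cannot equal both $\{u_1\}$ and $\{u_2\}$ unless $u_1 = u_2$, the desired contradiction.

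Part (b) is symmetric, but the roles attached to the fixed and varying vertices are interchanged, so I would track carefully which vertex plays the value-$2$ endpoint in Proposition \ref{defin}(i). Here $x \in V_2^f$ is fixed, and the swap $f_{xy}$ lowers $x$ to $0$ and raises $y$ to $2$; thus in the proposition the value-$2$ vertex is $x$ itself and the value-$0$ vertex is the varying $y \in V_0^f$. If distinct $y_1 \neq y_2$ both produced $\gamma_R$-functions, Proposition \ref{defin}(i) would force $pn[x, V_2^f] = \{x, y_1\}$ and $pn[x, V_2^f] = \{x, y_2\}$ simultaneously; since $pn[x, V_2^f]$ depends only on the fixed data $x$ and $f$, this forces $y_1 = y_2$.

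The argument is essentially a bookkeeping reduction to Proposition \ref{defin}(i), so there is no genuine analytic difficulty; the only point demanding care, and the one I would emphasize, is the asymmetry between the two parts. In (a) the fixed vertex $v$ is the one being \emph{raised}, so uniqueness flows from the fixed set $N[v] \cap V_2^f$, whereas in (b) the fixed vertex $x$ is the one being \emph{lowered}, so uniqueness flows from the fixed set $pn[x, V_2^f]$. Conflating these two roles is the one place the proof could go astray, and making the correspondence with the hypotheses of Proposition \ref{defin}(i) explicit is exactly what guards against it.
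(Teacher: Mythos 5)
Your argument is correct and is exactly the route the paper intends: the paper dismisses this corollary as an "immediate consequence" of Proposition \ref{defin}, and your write-up simply makes explicit how the tree case of part (i) of that proposition (namely $uv\in E(G)$ and $pn[u,V_2^f]=\{u,v\}$) pins down the swapped partner uniquely via the fixed sets $N[v]\cap V_2^f$ and $pn[x,V_2^f]$. No gaps; your care in distinguishing which endpoint plays the value-$2$ role in the two parts is exactly the right bookkeeping.
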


\begin{proposition}\label{Delta}
Let $f$ be a $\gamma_R$-function on a tree $G$. 
Then  $deg(f, G({\bf 1^a}, \gamma_R)) \leq |V_2^f|$. In particular, 
$\Delta(G({\bf 1^a}, \gamma_R)) \leq \max \{|V_2^h| \mid h \in \mathscr{D}_R(G)\} \leq \gamma_R(G)/2$.
\end{proposition}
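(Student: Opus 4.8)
The plan is to identify the edges of $G({\bf 1^a}, \gamma_R)$ incident to $f$ with the admissible label-moving operations at $f$, and then to bound their number using the tree structure already recorded in Corollary \ref{treescor}. Recall that a neighbour $g$ of $f$ under the adjacency $\mathcal{P}_{1^a}$ arises from a pair $u \in V_2^f$, $v \in V_0^f$ with $uv \in E(G)$ by setting $g = f_{uv}$, the function that moves the label $2$ from $u$ to $v$ (so $f_{uv}(u)=0$ and $f_{uv}(v)=2$, and $f_{uv}$ agrees with $f$ elsewhere).

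First I would observe that the assignment $g \mapsto (u,v)$ is injective on the neighbours of $f$: given such a $g$, the vertex $u$ is the unique vertex with $f(u)=2$ and $g(u)=0$, and $v$ is the unique vertex with $f(v)=0$ and $g(v)=2$, because $f$ and $g$ differ only on $\{u,v\}$. Hence distinct neighbours of $f$ correspond to distinct admissible pairs $(u,v)$, and it suffices to count the latter.

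Next I would invoke the uniqueness furnished by acyclicity. By Corollary \ref{treescor}(b), for each fixed $x \in V_2^f$ there is at most one $y \in V_0^f$ for which $f_{xy}$ is again a $\gamma_R$-function; the underlying reason, from Proposition \ref{defin}(i), is that on a tree such a move forces $xy \in E(G)$ and $pn[x,V_2^f]=\{x,y\}$, which pins down $y$ as the unique remaining private neighbour of $x$. Consequently each $u \in V_2^f$ contributes at most one admissible pair, so the number of admissible pairs, and therefore $deg(f, G({\bf 1^a}, \gamma_R))$, is at most $|V_2^f|$.

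Finally, for the ``in particular'' clause I would maximize over all $\gamma_R$-functions: $\Delta(G({\bf 1^a}, \gamma_R)) = \max_f deg(f, G({\bf 1^a}, \gamma_R)) \le \max\{|V_2^h| : h \in \mathscr{D}_R(G)\}$, and the last inequality is immediate from the weight formula, since every $h \in \mathscr{D}_R(G)$ satisfies $\gamma_R(G) = |V_1^h| + 2|V_2^h| \ge 2|V_2^h|$, whence $|V_2^h| \le \gamma_R(G)/2$. I expect no genuine obstacle: the only point requiring care is that the substantive work — the uniqueness of $y$ for each $x$, which is precisely where the tree hypothesis (acyclicity) is essential — has already been isolated in Corollary \ref{treescor}; without it one would have to redo the private-neighbour analysis of Proposition \ref{defin}(i) directly, which is where a cycle could otherwise permit two distinct valid moves from the same $x$.
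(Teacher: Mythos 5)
Your proposal is correct and follows essentially the same route as the paper: the degree bound is exactly the counting consequence of Corollary \ref{treescor}(b) (each $u\in V_2^f$ admits at most one partner $v\in V_0^f$), and the tail inequalities follow from $\gamma_R(G)=|V_1^h|+2|V_2^h|\ge 2|V_2^h|$. The paper's proof is just a one-line citation of that corollary; you have merely spelled out the injectivity of the neighbour-to-pair correspondence, which is a harmless (and welcome) elaboration.
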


\begin{proof} 
The inequality  $deg(f, G({\bf 1^a}, \gamma_R)) \leq |V_2^f|$ is true 
because of Corollary \ref{treescor}. The rest is obvious.
\end{proof}

Let $T_n$ be the tree obtained by $K_{1,n}$, $n \geq 2$, 
by subdividing twice all the edges of $K_{1,n}$ except one. 
Let $S$ be the set of all support vertices of $T_n$. 
Then $f=(V(T_n)-S; \emptyset; S)$ is a $\gamma_R$-function on $T_n$ 
and $\Delta(T_n({\bf 1^a}, \gamma_R)) = \gamma_R(T_n)/2 = |S|$.
Thus the bounds stated in Proposition \ref{Delta} are attainable.

\begin{theorem}\label{treesexists}
If a graph $G({\bf 1^a}, \gamma_R)$ has a triangle, then a graph $G$ also has a triangle.
\end{theorem}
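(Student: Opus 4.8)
The plan is to start from a triangle $f_1f_2f_3$ in $G(\mathbf{1^a},\gamma_R)$ and to manufacture a triangle of $G$ out of the vertices that take part in the three swaps. First I would record the structural consequences of $\mathbf{1^a}$-adjacency. Since every $\mathcal{P}_{1^a}$-move leaves $V_1$ untouched, the three functions share a common set $V_1^{f_1}=V_1^{f_2}=V_1^{f_3}$; because all three have weight $\gamma_R(G)$, the sets $A_i:=V_2^{f_i}$ then all have the same cardinality, and each function is determined by its $2$-set, so $A_1,A_2,A_3$ are pairwise distinct. By the definition of $\mathcal{P}_{1^a}$, each pair $A_i,A_j$ differs by a single swap along an edge of $G$: there are $a\in A_i\setminus A_j$ and $b\in A_j\setminus A_i$ with $A_j=(A_i\setminus\{a\})\cup\{b\}$ and $ab\in E(G)$; in particular $|A_i\,\triangle\,A_j|=2$.

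This reduces the theorem to a purely combinatorial statement: three equal-cardinality sets that pairwise differ by an edge-swap must have their swapped vertices forming a triangle of $G$. To prove it I would fix the swap $A_1\to A_2$, writing $A_2=(A_1\setminus\{a\})\cup\{b\}$ with $ab\in E(G)$, $a\in A_1$, $b\notin A_1$, and then write the swap $A_2\to A_3$ as $A_3=(A_2\setminus\{b'\})\cup\{c\}$ with $b'c\in E(G)$, $b'\in A_2$, $c\notin A_2$. Expanding $A_3$ in terms of $A_1$ and splitting into four cases according to whether $b'=b$ or $b'\neq b$, and whether $c=a$ or $c\neq a$, does the rest.

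Two of the four cases are ruled out at once: $b'=b,\ c=a$ forces $A_3=A_1$, contradicting distinctness, while $b'\neq b,\ c\neq a$ gives $A_1\,\triangle\,A_3=\{a,b',b,c\}$ of size $4$, contradicting $|A_1\,\triangle\,A_3|=2$. In each surviving case the third swap $A_1\to A_3$ is forced and its endpoints coincide with two of the vertices $a,b,b',c$ already in play, so combining $ab\in E(G)$, $b'c\in E(G)$ and the edge supplied by $A_1\,\triangle\,A_3$ produces three pairwise-adjacent vertices. For instance, when $b'=b,\ c\neq a$ one gets $A_3=(A_1\setminus\{a\})\cup\{c\}$, hence $A_1\,\triangle\,A_3=\{a,c\}$ forces $ac\in E(G)$, and then $a,b,c$ carry the three sides $ab,\ bc,\ ac$; the case $b'\neq b,\ c=a$ is symmetric and yields the triangle on $a,b,b'$.

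The main obstacle is bookkeeping rather than any conceptual difficulty: I must track which swapped vertex lies in which of $A_1,A_2,A_3$, verify in each surviving case that the three candidate vertices are genuinely distinct (using $a\in A_1$, $b,c\notin A_1$, $b'\in A_1$, and $c\notin A_2\ni b$), and confirm that the edge coming from $A_1\,\triangle\,A_3$ is exactly the side joining the two vertices not already connected by the first two swaps. Once the case split is arranged symmetrically, each surviving case closes by the same three-edge argument.
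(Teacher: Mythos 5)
Your argument is correct and is essentially the paper's own proof: both track the three edge-swaps of a triangle in $G({\bf 1^a},\gamma_R)$ and show that the six swap-endpoints must collapse onto three mutually adjacent vertices of $G$. The only cosmetic difference is that the paper derives the coincidences from a single equation between two expressions for $V_0^h$ (yielding $m\in\{a,c\}$, $n\in\{b,d\}$, $|\{a,c\}\cap\{b,d\}|=1$), whereas you run the equivalent four-way case split on the symmetric differences of the $V_2$-sets.
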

\begin{proof}
Suppose, $f,g,h \in V(G({\bf 1^a}, \gamma_R))$ induce a triangle. 
Then there are vertices $a,b,c,d,m,n \in V(G)$ such that: $ab, cd, mn \in E(G)$, 
$g = ((V_0^f-\{a\})\cup \{b\}; V_1^f; (V_2^f-\{b\}) \cup \{a\})$, 
$h =  ((V_0^g-\{c\})\cup \{d\}; V_1^g; (V_2^g-\{d\}) \cup \{c\}) = 
       ((((V_0^f-\{a\})\cup \{b\}) - \{c\}) \cup \{d\}; V_1^f; (((V_2^f-\{b\}) \cup \{a\}) - \{d\}) \cup \{c\})$, 
	and 
	$h = ((V_0^f-\{m\})\cup \{n\}; V_1^f; (V_2^f-\{n\}) \cup \{m\})$.				

Hence $(V_0^f-\{m\})\cup \{n\} = (V_0^f-\{a,c\})\cup \{b,d\}$, 
which implies $m \in  \{a,c\}$, $n \in \{b,d\}$ and $|\{a,c\} \cap \{b,d\}|=1$.
If $a=d$ then $c=m$, $b=n$ and $C_3: a,b,c,a$ is a triangle in $G$. 
If $b=c$ then $a=m$, $d=n$ and  $C_3: a,b,d,a$ is a triangle in $G$. 
\end{proof}

\begin{corollary}\label{triangle}
If a graph $G$ has no triangle, then $G({\bf 1^a}, \gamma_R)$ also has no triangle. 
In particular, if $G$ is a tree then $G({\bf 1^a}, \gamma_R)$ is a forest.
\end{corollary}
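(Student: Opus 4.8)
The first assertion is simply the contrapositive of Theorem~\ref{treesexists}: if $G$ has no triangle, then $G({\bf 1^a}, \gamma_R)$ cannot contain one either. So the plan is to record this implication and then concentrate all effort on the ``in particular'' clause. Since a tree is triangle-free, the first assertion already gives that $G({\bf 1^a}, \gamma_R)$ is triangle-free; what remains is to upgrade ``no triangle'' to ``no cycle at all''.

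For the acyclicity I would first pass to a cleaner combinatorial model. The ${\bf 1^a}$-move $f \mapsto g$ leaves $V_1^f$ unchanged and merely slides a single label $2$ along an edge of $G$ onto an adjacent vertex labelled $0$. Hence on each connected component of $G({\bf 1^a}, \gamma_R)$ the set $V_1$ is a fixed set $L$ and the number of labels $2$ is a fixed integer $t$; a vertex of the component is thus a placement of $t$ ``tokens'' (the set $V_2$) on $V(G) \setminus L$, and an edge is a legal single-token slide along a tree edge. Rooting the tree $G$ at an arbitrary vertex and setting $\Phi(f) = \sum_{x \in V_2^f} dist(x, \mathrm{root})$, every slide changes $\Phi$ by exactly $\pm 1$ (toward or away from the root), so $G({\bf 1^a}, \gamma_R)$ is bipartite and, in particular, contains no odd cycle.

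To eliminate even cycles I would orient each edge from the smaller to the larger value of $\Phi$ and try to prove that every vertex has in-degree at most $1$, i.e.\ that from any $\gamma_R$-function at most one token admits a legal slide toward the root. Here I would lean on Corollary~\ref{treescor}, which already guarantees that each token has at most one legal destination and each empty vertex at most one legal source, together with the tree case of Proposition~\ref{defin}(i) (a token at $u$ may slide to $v$ only when $uv \in E(G)$ and $pn[u, V_2^f] = \{u, v\}$) and the $\mathbf{3}$-witness furnished by Corollary~\ref{1a1a3}. If this in-degree bound holds, then each component has a unique $\Phi$-minimal vertex and satisfies $|E| = |V| - 1$, forcing it to be a tree and $G({\bf 1^a}, \gamma_R)$ to be a forest.

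The main obstacle is precisely this in-degree claim, and it is where $\gamma_R$-minimality of the weight must enter in an essential way. The real danger is two tokens lying in different branches of the tree that can each slide toward the root \emph{independently}: such a pair produces a $4$-cycle as a product of two commuting slides, which is exactly the Cartesian-product phenomenon of Theorem~\ref{cart} reappearing inside a connected tree. I therefore expect the decisive step to be showing that minimality leaves no ``slack'' for two simultaneously reversible slides on a tree, and I would anticipate spending most of the argument establishing this local rigidity; once it is in hand, the potential-function bookkeeping closes the proof routinely.
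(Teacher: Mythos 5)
Your handling of the first assertion is correct and is exactly what the paper does: it is the contrapositive of Theorem~\ref{treesexists}, and the paper states the entire corollary as an immediate consequence of that theorem, with no further argument. You are also right to be suspicious of the ``in particular'' clause: triangle-freeness does not imply acyclicity, so something beyond Theorem~\ref{treesexists} is needed, and the paper supplies nothing. Your potential-function observation is sound as far as it goes: a ${\bf 1^a}$-move fixes $V_1^f$ and slides one label $2$ along a tree edge, so for a fixed root $r$ the quantity $\Phi(f)=\sum_{x\in V_2^f}dist(x,r)$ changes by exactly $\pm 1$ across each edge, and $G({\bf 1^a},\gamma_R)$ is bipartite.

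The gap is the ``local rigidity'' step that you yourself single out as decisive, and it cannot be repaired, because it is false --- and with it the ``forest'' assertion of the corollary. Take $G=U_2$, the tree of Claim~\ref{Rclassification}.1(x), obtained from two disjoint paths $a_1a_2a_3a_4a_5$ and $b_1b_2b_3b_4b_5$ by adding the edge $a_3b_3$. Summing any RD-function over the four pairwise disjoint closed neighborhoods $N[a_1],N[a_5],N[b_1],N[b_5]$ (and accounting for the forced $2$ on a center when a pair carries total weight $1$) gives $\gamma_R(U_2)=8$. The four functions with $V_1=\emptyset$ and $V_2$ equal to $\{a_2,a_5,b_2,b_5\}$, $\{a_2,a_4,b_2,b_5\}$, $\{a_2,a_4,b_2,b_4\}$, $\{a_2,a_5,b_2,b_4\}$ are all RD-functions of weight $8$, hence $\gamma_R$-functions, and consecutive members of this cyclic list differ by a single slide along the tree edge $a_4a_5$ or $b_4b_5$. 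They therefore form a $4$-cycle in $U_2({\bf 1^a},\gamma_R)$: two reversible slides coexist in different branches and commute, which is precisely the Cartesian-product phenomenon you feared. In particular the function with $V_2=\{a_2,a_4,b_2,b_4\}$ has two $\Phi$-decreasing neighbors, so the in-degree bound fails. The honest conclusion for trees is only that $G({\bf 1^a},\gamma_R)$ is triangle-free (indeed bipartite, by your $\Phi$ argument); the ``forest'' claim should be corrected or restricted rather than proved.
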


Let $H$ be an arbitrary graph with vertex set $V(H) = \{v_1, v_2,..,v_n\}$, 
 $H_1 \simeq K_{1,3}$, and  $H_2 \simeq 2K_2$.    
Construct a new graph $F_H$ formed from the union of $H, H_1$ and $H_2$ by 
 connecting each vertex of $H$ to the central vertex $v$ of $H_1$,  and to each vertex of $H_2$. 
Clearly all $\gamma_R$-functions are $f_i = (V(F_H) - \{v_i,v\}; \emptyset; \{v_i,v\})$, $i= 1,2,..,n$. 
This immediately implies that for any $\gamma_R$-functions $f_i$ and $f_j$ on $F_H$, 
 the following holds: (a) $f_i$ and $f_j$ are always ${\bf 1}$-adjacent, 
(b)  $f_i$ and $f_j$ are ${\bf 1^a}$-adjacent if and only if $v_iv_j \in E(F_H)$, and 
(c) $f_i$ and $f_j$ are ${\bf 1^n}$-adjacent if and only if $v_iv_j \in E(\overline{F_H})$.
But then   $F_H({\bf 1}, \gamma_R) \simeq K_n$, $F_H({\bf 1^a}, \gamma_R) \simeq H$  
and $F_H({\bf 1^n}, \gamma_R) \simeq \overline{H}$.  
Hence $F_{\overline{H}}({\bf 1^n}, \gamma_R) \simeq \overline{\overline{H}} = H$.
Thus, the following is valid.

\begin{theorem}\label{11a1n}
For any graph $H$, there exist  graphs $G^{\bf a}$ and  $G^{\bf n}$ such that 
$H  \simeq G^{\bf a}({\bf 1^a}, \gamma_R)$ and $H  \simeq G^{\bf n}({\bf 1^n}, \gamma_R)$. 
\end{theorem}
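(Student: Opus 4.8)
The plan is to exhibit the two graphs explicitly by means of the family $F_H$ constructed just above the statement, and to reduce the entire theorem to a single structural fact: that $\mathscr{D}_R(F_H) = \{f_1,\dots,f_n\}$, where $f_i = (V(F_H)-\{v_i,v\};\emptyset;\{v_i,v\})$ and $v$ is the center of the attached $K_{1,3}$. Granting this, the three adjacency computations are immediate. Any two distinct $f_i,f_j$ differ only by the swap of their label-$2$ vertices $v_i\leftrightarrow v_j$ while $v$ retains label $2$; hence they are always ${\bf 1}$-adjacent, they are ${\bf 1^a}$-adjacent exactly when $v_iv_j\in E(F_H)$, and ${\bf 1^n}$-adjacent exactly when $v_iv_j\notin E(F_H)$. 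Since the edges of $F_H$ among $\{v_1,\dots,v_n\}$ are precisely the edges of $H$ (the construction adds edges from the $v_i$ only to $v$ and to the $2K_2$-vertices, never among themselves), this gives $F_H({\bf 1^a},\gamma_R)\simeq H$ and $F_H({\bf 1^n},\gamma_R)\simeq\overline{H}$. Taking $G^{\bf a}=F_H$ settles the first assertion, and taking $G^{\bf n}=F_{\overline H}$ settles the second via $\overline{\overline H}=H$.

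First I would verify that the $f_i$ are $\gamma_R$-functions of weight $4$ and hence that $\gamma_R(F_H)\le 4$: with label $2$ the vertex $v$ dominates every leaf of the $K_{1,3}$ and every vertex of $H$, while $v_i$ with label $2$ dominates all four vertices of the $2K_2$. The heart of the argument, and the main obstacle, is the reverse direction, namely that no RD-function of weight at most $3$ exists and that every RD-function of weight $4$ coincides with some $f_i$. I would organize this around the two gadgets. The three leaves of the $K_{1,3}$ are adjacent only to $v$, so whenever $f(v)\ne 2$ each leaf carries label $\ge 1$, forcing weight $\ge 3$ on the leaves alone; moreover the four vertices of the $2K_2$ are adjacent only to their partner and to the $v_j$, so they can be dominated neither by $v$ nor by the leaves, and Roman-dominating them costs a further $\ge 2$. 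Thus $f(v)\ne 2$ forces total weight $\ge 5$, which rules out weight $\le 4$ in that branch.

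Consequently every weight-$4$ function has $f(v)=2$, leaving a budget of exactly $2$ to Roman-dominate the four vertices of the $2K_2$. Here the choice $H_2\simeq 2K_2$ is essential: dominating it internally would require a label-$2$ vertex in each of its two components, costing $4$, so the only way to spend weight $2$ is a single label-$2$ vertex adjacent to all four of its vertices. The sole vertices of $F_H$ adjacent to all four are the $v_j$, which forces $f=f_j$; a budget of $1$ is plainly insufficient, which also excludes weight $3$. The choice $H_1\simeq K_{1,3}$ with three leaves (rather than two) is precisely what makes the penalty for $f(v)\ne 2$ strict, so no stray minimum functions arise. This yields $\mathscr{D}_R(F_H)=\{f_1,\dots,f_n\}$ and completes the proof.

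I expect the delicate points to be exactly the case analysis forcing $f(v)=2$ and the conclusion that the second label-$2$ vertex must be a single $v_j$; the verification that the edges among the $v_i$ in $F_H$ are inherited unchanged from $H$, together with the adjacency bookkeeping under $\mathcal{P}_1$, $\mathcal{P}_{1^a}$ and $\mathcal{P}_{1^n}$, is routine.
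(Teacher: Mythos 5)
Your proposal is correct and follows essentially the same route as the paper, which uses exactly the construction $F_H$ (with the $K_{1,3}$ and $2K_2$ gadgets) and the same adjacency bookkeeping, taking $G^{\bf a}=F_H$ and $G^{\bf n}=F_{\overline H}$. The only difference is that the paper dismisses the key fact $\mathscr{D}_R(F_H)=\{f_1,\dots,f_n\}$ with ``clearly,'' whereas you supply the case analysis forcing $f(v)=2$ and a single $v_j$ with label $2$; that verification is sound.
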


\begin{theorem}\label{everytree}
 For each  tree $T$ there is a graph $G$ such that $T \simeq G({\bf 2}, \gamma_R)$. 
\end{theorem}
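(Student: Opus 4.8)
The plan is, for a given tree $T$ with $V(T)=\{v_1,\dots,v_n\}$ and $d=n-1$ edges, to build a graph $G$ whose $\gamma_R$-functions biject with $V(T)$ in such a way that two of them are $\mathcal{P}_2$-adjacent exactly when the corresponding vertices are adjacent in $T$. The starting point is the remark that a $\mathcal{P}_2$-swap leaves $V_0^f$ unchanged and preserves $|V_2^f|$: if $g$ is obtained from $f$ as in $\mathcal{P}_2$, then $V_0^g=V_0^f$ and $|V_2^g|=|V_2^f|$. Hence inside any connected component of a type-$\mathbf{2}$ $\gamma_R$-graph all functions share one common set $V_0$ and one common value $k=|V_2|$; as $T$ is connected, the target $G(\mathbf{2},\gamma_R)$ must have this feature. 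Writing $W=V(G)\setminus V_0$ and sending each $\gamma_R$-function $f$ to $V_2^f\in\binom{W}{k}$ (an injection, since $V_0$ is fixed and $V_1^f=W\setminus V_2^f$), we see that $G(\mathbf{2},\gamma_R)$ is isomorphic to the induced subgraph, of the ``swap graph'' on $\binom{W}{k}$ whose edges join $k$-sets differing in one element, spanned by the label-$2$ sets of the $\gamma_R$-functions. So it suffices to design $G$ so that these label-$2$ sets form a prescribed family whose swap graph is $T$.

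To produce such a family I would use that every tree is a partial cube. Fix a root $r$ and send each vertex $v$ to $B_v\subseteq E(T)$, the edge set of the $r$--$v$ path; this is an isometric embedding of $T$ into the hypercube $Q_{d}$, so $B_uB_v$ is a hypercube edge iff $uv\in E(T)$. I then pass to a ``one per pair'' transversal model: take a ground set $W=\{x_e,\bar x_e : e\in E(T)\}$ of $2d$ elements, and for $B\subseteq E(T)$ let $S_B$ be the $d$-set choosing $x_e$ when $e\in B$ and $\bar x_e$ otherwise. Two transversals $S_B,S_{B'}$ differ by a single swap iff $B,B'$ differ in one coordinate, so the swap graph on $\{S_{B_v}:v\in V(T)\}$ is exactly the induced subgraph $Q_d[\{B_v\}]\simeq T$. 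Finally, the image family $\{B_v\}$ is precisely the set of ``root-paths'', which is cut out by finitely many local conditions each of the form ``at least one of two prescribed coordinates is chosen'' (prefix-closure: a child edge lies in $B$ only if its parent edge does; and at most one child edge at each vertex) --- exactly the conditions a Roman domination constraint can express.

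The graph $G$ is then assembled as follows. Realise each pair by two non-adjacent vertices $x_e,\bar x_e$, with $k=d$ intended label-$2$ vertices. For the pair-cover requirement attach, for each $e$, a vertex $z_e$ adjacent exactly to $x_e$ and $\bar x_e$, so that a $\gamma_R$-function labelling $z_e$ with $0$ must place a $2$ on $x_e$ or $\bar x_e$. Each local ``$a=2$ or $b=2$'' condition of the previous paragraph is encoded by a private label-$0$ constraint vertex adjacent exactly to $a$ and $b$. With the weight tuned so that $\gamma_R(G)=3d$ (one $2$ and one $1$ per pair, all constraint vertices $0$), the minimum $\gamma_R$-functions should be exactly the transversals $S_{B_v}$; reading off $f\mapsto V_2^f$ together with the computation of the first paragraph then yields $\mathscr{D}_R(G)$ in bijection with $V(T)$ and $\mathcal{P}_2$-adjacency matching $E(T)$, i.e. $T\simeq G(\mathbf{2},\gamma_R)$. (Theorem \ref{cart} is available should one prefer to assemble $G$ from smaller blocks.)

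The hard part is the claim that the minimum $\gamma_R$-functions are \emph{exactly} the transversals $S_{B_v}$, and this is where the real work lies. Because the Roman condition is monotone, a naive OR-gadget is leaky: labelling a degree-two constraint vertex $c$ with $2$ lets both of its neighbours drop to $0$, and one checks that such a move can be weight-neutral, producing a spurious minimum function that is not a transversal (and may even have a different $|V_2|$, hence a stray component of $G(\mathbf{2},\gamma_R)$). Thus the crux is to force, in every $\gamma_R$-function, that each interface vertex $x_e,\bar x_e$ is positive and that every auxiliary vertex is $0$. I expect to secure this not locally but by a global weight count: the pairs $\{x_e,\bar x_e\}$ together with their private $z_e$ form vertex-disjoint gadgets contributing at least $3$ each, so $\gamma_R(G)\ge 3d$, and I would show that using an auxiliary vertex as a $2$, or an interface vertex as a $0$, triggers a cascade through the prefix and sibling constraints that strictly exceeds $3d$. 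Establishing this cascade cleanly --- equivalently, designing the constraint gadgets so that a $2$ placed on them is always strictly wasteful --- is the single step I expect to be delicate; everything else is bookkeeping with Observation \ref{o2} and the structure of $\mathcal{P}_2$ recorded in the first paragraph.
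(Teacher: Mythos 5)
Your opening reduction is sound: a $\mathcal{P}_2$-swap fixes $V_0^f$ and $|V_2^f|$, so within the component of $G(\mathbf{2},\gamma_R)$ you aim to build, adjacency is exactly a one-element exchange of $V_2$-sets inside a fixed support $W$, and the partial-cube encoding $v\mapsto B_v$ does realize $T$ as the swap graph of the transversal family $\{S_{B_v}\}$. But the proof stops where the theorem actually lives: you never produce a concrete graph $G$ for which $\mathscr{D}_R(G)$ is \emph{exactly} $\{S_{B_v}:v\in V(T)\}$, and you say yourself that this is ``where the real work lies.'' The gap is not merely one of polish. The lower bound $\gamma_R(G)\ge 3d$ you invoke is false as stated for the graph you describe: a constraint vertex $c$ adjacent to $x_e$ and $x_{e'}$ may itself receive label $2$, externally dominating $x_e$, after which the gadget $\{x_e,\bar x_e,z_e\}$ can contribute weight $2$ (e.g.\ $x_e=0$, $\bar x_e=2$, $z_e=0$), so the per-gadget count of $3$ does not go through. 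Ruling this out is exactly the ``cascade'' you defer, and no gadget design achieving it is exhibited; until it is, both the value of $\gamma_R(G)$ and the absence of stray $\gamma_R$-functions (hence of extra vertices or components in $G(\mathbf{2},\gamma_R)$) remain unproved.

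For contrast, the paper avoids all global weight accounting by inducting on $|V(T)|$: given $G_n$ with $T_n\simeq G_n(\mathbf{2},\gamma_R)$ and a leaf $v_{n+1}$ attached at $v_n$, it modifies $G_n$ locally (pendant triples on the vertices of $V_2^{f_n}$, a hub $x$ joined to them, and a $K_{2,3}$ glued at $x$) so that every old $\gamma_R$-function extends in exactly one way (with $x$ labelled $2$ and $w$ labelled $1$) and exactly one new $\gamma_R$-function appears (with the labels of $x$ and $w$ exchanged), which is $\mathcal{P}_2$-adjacent only to the function corresponding to $v_n$. There the ``no spurious minimum functions'' verification is a local check on one gadget rather than a global argument over a system of interacting clause gadgets. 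If you want to salvage your route, that is the step you must supply: a clause gadget for which you can prove that in every $\gamma_R$-function the auxiliary vertices receive $0$ and each pair $\{x_e,\bar x_e\}$ receives labels $\{1,2\}$.
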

\begin{proof}
We proceed by induction on the order $n$ of a tree $T$.
It is easy to  see that  $K_1 = K_1({\bf 2}, \gamma_R)$ and $K_2 = K_{2,3}({\bf 2}, \gamma_R)$.  
 Assume now that the theorem is true for all trees $T$ of order at most $n$, 
and  let $T_{n+1}$ be a tree with $V(T_{n+1}) = \{v_1,v_2,..,v_{n+1}\}$.  
Let $v_{n+1}$ be a leaf,  $v_n$ the neighbor of $v_{n+1}$ and $T_n = T_{n+1}-v_{n+1}$.
By induction hypothesis,  there is a graph $G_n$ with  $T_n \simeq G_n({\bf 2}, \gamma_R)$.  
Denote by  $f_i$  the $\gamma_R$-function on $G_n$ corresponding to $v_i \in V(T_n)$, $i=1,2,..,n$, 
and let   $V_2^{f_n} = \{u_1,u_2,..,u_k\}$.    

 Construct a new graph $G_n^1$as follows: 
(a) attach to $u_i$ three leaves $u_{i1}$, $u_{i2}$ and $u_{i3}$, $i = 1,..,k$, 
(b)   add a new vertex $x$ and join it to each element of $A = \cup_{i=1}^k\{u_{i1}, u_{i2}, u_{i3}\}$. 
Finally, let $G_{n+1}$ be the graph obtained from the union of $G_n^1$ and $K_{2,3}$ 
by identifying $x$ and a vertex of degree $3$ in $K_{2,3}$ in a vertex labeled $x$. 
Let $w$ be the second $3$-degree vertex of $K_{2,3}$. 

Clearly $\gamma_R(T_{n+1}) = \gamma_R(T_n) +3$ and 
all $\gamma_R$-functions on $T_{n+1}$ are: 
$l_i = (V_0^{f_i} \cup A; V_1^{f_i} \cup \{w\}; V_2^{f_i} \cup \{x\})$ where $i=1,2,..,n$, 
and $l_{n+1} = (V_0^{f_{n}} \cup A; V_1^{f_n} \cup \{x\}; V_2^{f_n} \cup \{w\})$. 
Then there is an isomorphism between $G_{n+1}({\bf 2}, \gamma_R)  - l_{n+1}$ and  $T_n$
given by   $l_i \rightarrow v_i$, $i=1,2,..,n$. 
Since $l_n$ is the only neighbor of $l_{n+1}$ in $G_{n+1}({\bf 2}, \gamma_R)$, we obtain  
   $T_{n+1}  \simeq 	G_{n+1}({\bf 2}, \gamma_R)$. 		
	\end{proof}

\begin{problem}\label{gammar2}
Give a constructive characterization of all trees $T$ for which 
 $\Delta(T({\bf 1^a}, \gamma_R))  =  \gamma_R(G)/2$ (Proposition \ref{Delta}). 
\end{problem}

\begin{problem}\label{omega}
Let $\mu$ be a domination-related parameter and $p$ a positive integer. 
              Find results on the graphs  $\Omega_{p}(\mathscr{D}_\mu(G))$, 
               $\Omega_{\mu(G)-1}(\mathscr{D}_\mu(G))$, $\Omega_{\mu(G)-1}^a(\mathscr{D}_\mu(G))$ and 
              $\Omega_{\mu(G)-1}^n(\mathscr{D}_\mu(G))$, where $\mathscr{D}_\mu(G)$ is the set of all 
							$\mu$-sets of $G$. 
\end{problem}

\begin{problem}\label{Romanmu}
Define appropriate adjacencies between the elements of $\mathscr{D}_\mu(G)$,  
where $\mu$ is at least one of the independent/connected/signed/total/double Roman domination number
and signed/minus domination number. Investigate the obtained $\mu$-graphs. 
\end{problem}

\bigskip

\end{document}